\documentclass[a4paper]{article}

\usepackage{
amsmath,
amsthm,
amscd,
amssymb,
stmaryrd,
wasysym
}
\usepackage{comment}
\usepackage{tikz}
\usetikzlibrary{positioning,arrows}
\usetikzlibrary{decorations.pathreplacing}
\usetikzlibrary{decorations.pathmorphing}
\usetikzlibrary{patterns}
\usepackage{xstring}

\usepackage{pgfplots}
\usepgfplotslibrary{fillbetween}
\pgfplotsset{compat=1.18}

\usepackage{multicol}

\usepackage{ifthen}

\usepackage{subcaption}

\newtheorem{lemma}{Lemma}
\newtheorem{definition}{Definition}
\newtheorem{corollary}{Corollary}
\newtheorem{proposition}{Proposition}
\newtheorem{example}{Example}
\newtheorem{theorem}{Theorem}

\newtheorem{question}{Question}
\newtheorem{remark}{Remark}

\newcommand\xqed[1]{%
  \leavevmode\unskip\penalty9999 \hbox{}\nobreak\hfill
  \quad\hbox{#1}}
\newcommand\qee{\xqed{$\fullmoon$}}

\newcommand{\R}{\mathbb{R}}
\newcommand{\Q}{\mathbb{Q}}
\newcommand{\Z}{\mathbb{Z}}

\newcommand{\N}{\mathbb{N}}

\tikzset{snake it/.style={decorate, decoration={snake, segment length=1mm, amplitude=0.2mm}}}

\newcommand{\tle}[1]{
    \ifthenelse{\equal{#1}{ }}{
    }{}
    \ifthenelse{\equal{#1}{n}}{
        \draw (0.3,0.25) -- (0.5,0.75) -- (0.7,0.25);
    }{}
    \ifthenelse{\equal{#1}{s}}{
        \draw (0.3,0.75) -- (0.5,0.25) -- (0.7,0.75);
    }{}
    \ifthenelse{\equal{#1}{x}}{
        \draw (0.3,0.75) -- (0.5,0.25) -- (0.7,0.75);
        \draw (0.3,0.25) -- (0.5,0.75) -- (0.7,0.25);
    }{}
}

\newcommand{\dom}{\mathrm{dom}}
\newcommand{\REC}{\textsc{Rec}}

\newcommand{\cshape}{\mathrm{cshape}}

\newcommand{\lang}{\mathcal{L}}
\newcommand{\vortex}[1]{%
  \tikz[scale=#1]{
    \draw[fill=gray] (0,0) rectangle (1,1);
    \fill[black!15] (0.1,0.1) rectangle (0.9,0.9);
    \draw (0,0) arc (180:90:0.5);
    \draw (1,0) arc (270:180:0.5);
    \draw (1,1) arc (0:-90:0.5);
    \draw (0,1) arc (90:0:0.5);
  }
}

\newcommand{\drawvortex}[1]{%
  \begin{scope}[shift={#1}]
    \draw[fill=gray] (0,0) rectangle (1,1);
    \fill[black!15] (0.1,0.1) rectangle (0.9,0.9);
    \draw (0,0) arc (180:90:0.5);
    \draw (1,0) arc (270:180:0.5);
    \draw (1,1) arc (0:-90:0.5);
    \draw (0,1) arc (90:0:0.5);
  \end{scope}
}

\newcommand{\llb}{\llbracket}
\newcommand{\rrb}{\rrbracket}

\newcommand{\una}{\mathrm{una}}

\newcommand{\drawsides}[6]{
\ifthenelse{\equal{#3}{1}}{
  \draw (#1+0.9, #2+0.1) edge[bend right=10] (#1+0.9, #2+0.9);
  \draw (#1+0.9, #2+0.1) edge[bend left=10] (#1+0.9, #2+0.9);
}{}
\ifthenelse{\equal{#4}{1}}{
  \draw (#1+0.1, #2+0.9) edge[bend right=10] (#1+0.9, #2+0.9);
  \draw (#1+0.1, #2+0.9) edge[bend left=10] (#1+0.9, #2+0.9);
}{}
\ifthenelse{\equal{#5}{1}}{
  \draw (#1+0.1, #2+0.1) edge[bend right=10] (#1+0.1, #2+0.9);
  \draw (#1+0.1, #2+0.1) edge[bend left=10] (#1+0.1, #2+0.9);
}{}
\ifthenelse{\equal{#6}{1}}{
  \draw (#1+0.1, #2+0.1) edge[bend right=10] (#1+0.9, #2+0.1);
  \draw (#1+0.1, #2+0.1) edge[bend left=10] (#1+0.9, #2+0.1);
}{}
}

\title{Two block gluing constructions}

\author{
  Ville Salo
  \ \ \ \ \ \ and\ \ \ \ \
  Ilkka T\"orm\"a
  \\
  Department of Mathematics and Statistics \\
  University of Turku, Finland \\
  \{\texttt{vosalo}, \texttt{iatorm}\}\texttt{@utu.fi}
}

\begin{document}
\maketitle

\begin{abstract}
We prove two existence results about block gluing in two-dimensional SFTs. First, a large class of functions between exponential and logarithmic can be realized as block gluing functions of two-dimensional SFTs. Second, there exists an aperiodic linearly block gluing two-dimensional SFT with entropy dimension $1$. These solve two open questions of Gangloff and Sablik.
\end{abstract}

\section{Introduction}

A two-dimensional subshift is \emph{block gluing} if for any two square patterns, we can find a configuration containing copies of them in specified positions, as long as the gap between the patterns is at least $C$, for some constant $C$. It is well-known that block gluing subshifts have quite different qualitative properties than general subshifts: for instance, a nontrivial block gluing subshift has positive entropy.

In \cite{GaSa21}, Gangloff and Sablik introduced a relaxation of the notion of block gluing, by allowing the gap between two $n$-by-$n$ patterns to depend on $n$. The classical block gluing property corresponds to block gluing with a constant function. Gangloff and Sablik show that linearly block gluing subshifts of finite type (SFTs) can have any $\Pi^0_1$ entropy, agreeing with the class of entropies of general SFTs \cite{HoMe10}.

In \cite{PaSc15a} it is shown that not every $\Pi^0_1$ number appears as the entropy of a constant block gluing SFT, and \cite{GaSa21} generalizes this to $o(\log(n))$ block gluing functions. In this regime, periodic points are dense, and entropies are computable (indeed, one can give an explicit bound on the speed of this computability). It is shown in \cite{PaSc15a} that a large class of numbers does show up as entropies as soon as the dimension is at least $3$. The class of entropies of constant block gluing SFTs is not very well understood in any dimension $d \geq 2$, for example, the ``number-theoretic nature'' of the entropy of the golden mean shift (the binary subshift where horizontally and vertically adjacent $1$s are forbidden) is not understood \cite{Pa12}.

In \cite{GaSa21}, while every $\Pi^0_1$ entropy is realized in linearly block gluing subshifts, the method they use does not produce nontrivial zero-entropy examples. The one-point subshift is an example of a block-gluing subshift with zero entropy, but they leave open whether there are any nontrivial examples.

We give a nontrivial example:

\begin{theorem}
There is an aperiodic linearly block gluing $\Z^2$ SFTs whose entropy dimension is $1$.
\end{theorem}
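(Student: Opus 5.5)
The plan is to build the SFT as a sparse, self-similar but ``soft'' hierarchical structure: a zero-entropy aperiodic skeleton decorated so that any two square patterns can be glued with a gap linear in their size. Entropy dimension $1$ means the number of $n\times n$ patterns grows like $2^{n^{1+o(1)}}$ but not $2^{\Theta(n^2)}$, so some freedom is allowed along one-dimensional sets. My concrete choice is a substitutive (Robinson-type) structure. In it, the number of valid $n\times n$ patterns is only polynomial in the hierarchical data, plus free bits placed on the one-dimensional ``walls'' of the hierarchy. Such bits contribute about $n$ bits per $n\times n$ window, giving $\log$ pattern count $\Theta(n)$ and hence entropy dimension exactly $1$.

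The first step is to fix a rigid aperiodic base. Rigid tilings like Robinson's are not linearly block gluing, because the position of a large supertile relative to a pattern is forced over long distances. I would therefore use a \emph{flexible} variant. Each level-$k$ macrotile is a square of side $\Theta(2^k)$ whose border is a ``corridor'' of width growing with the level. Inside the corridor, the lower-level tiling can be shifted, or broken and restarted, via signals that travel along the walls. Concretely, I would let the walls of level-$k$ squares carry binary free symbols together with a shift offset. Any configuration is then a nested family of squares, where each square's interior is a tiling by a \emph{free choice} of copies of smaller squares, separated by corridors of flexible width. Aperiodicity follows as usual, since every configuration contains arbitrarily large macrotiles, or infinite ones with nontrivial structure, and the hierarchy forbids periods.

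Entropy dimension: for the upper bound, an $n\times n$ window meets $O(1)$ squares of each level $\le \log n$. Each square's internal placement data is determined by $O(\text{side})$ symbols on its walls, so summing gives $O(n)$ bits per level and $O(n\log n)$ overall. This yields dimension $1$, since $\log$ of the count divided by $\log n$ still gives exponent $1$ after taking $\log\log$. For the lower bound, the free wall bits of the top level alone give $2^{\Omega(n)}$ patterns.

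Gluing is the main obstacle. Given two $n\times n$ patterns $P,Q$ at distance $\ge Cn$, I would complete each to a full macrotile of level $k\approx \log n+O(1)$, of side $O(n)$. I then place both inside a common higher-level square, using the flexible corridors to absorb the arbitrary relative offset of the two macrotiles. The delicate points are twofold. First, a pattern may lie at a corner where several high-level walls meet, or inside an infinite/degenerate configuration, and I must still extend it to a finite macrotile of linear size. Second, the corridor flexibility must allow any integer offset, not merely offsets modulo $2^k$, which requires the shift signals to realize arbitrary displacements within linear space. I would handle the first point by a case analysis on which hierarchy levels the pattern sees. I would handle the second by allowing each corridor to contain a variable number of ``filler'' columns and rows, whose count is checked locally only modulo nothing, so that any offset up to its width is achievable.
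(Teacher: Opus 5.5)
There is a genuine gap, and it sits in the entropy count, which is exactly where the difficulty of the statement lies. Your upper bound uses the claim that an $n\times n$ window meets $O(1)$ squares of each level $k\le\log n$. This is false. Squares of side $\Theta(2^k)$ that tile the interiors of larger squares occur on the order of $n^2/4^k$ times in the window, and in particular $\Theta(n^2)$ times for bounded $k$.

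In your design, every square independently carries free wall bits, a shift offset, and a free choice of corridor widths for its interior. So the lowest levels alone contribute $\Theta(n^2)$ independent bits, giving $\log N_n=\Theta(n^2)$: the SFT has positive entropy and entropy dimension $2$. Summing your own per-square estimate correctly, $O(2^k)$ bits for each of about $n^2/4^k$ squares, gives $\sum_k n^2/2^k=\Theta(n^2)$. This is not a bookkeeping slip. Any flexibility attached locally to each macrotile has this defect. For entropy dimension $1$, the free data visible in an $n\times n$ window must be $n^{1+o(1)}$ bits, so the flexibility has to be coherent along entire lines or long synchronized chains.

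The remaining steps are also unsupported:
\begin{itemize}
\item You do not explain how finite local rules force a nested hierarchy once corridor widths are unconstrained; Robinson-type arguments rely on exact sizes.
\item By compactness, unbounded widths yield configurations lying entirely inside a corridor, and nothing in your description prevents these from being periodic.
\item Changing one corridor width shifts everything beyond it, so the change must be propagated through all higher levels.
\item The two gluing difficulties you identify are deferred to an unspecified case analysis.
\end{itemize}

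The paper avoids all of this by separating the two requirements. It starts from an existing aperiodic Wang SFT $X_T$ with entropy dimension at most $1$ that is only weakly linearly block transitive. Examples are the $32\,\mathrm{id}$-net gluing Robinson variant of Gangloff--Sablik, or Ollinger's primitive substitutive SFT, which is linearly recurrent. On top of it the paper superimposes global shears: infinite horizontal and vertical fault lines across which colors are matched diagonally, with isolated vortex tiles where lines cross. Up to a unit shift, every configuration has the form $f(x,y,z)$ with $x\in X_T$ and $y,z\in\{0,1\}^\Z$. Three consequences follow:
\begin{itemize}
\item The extra freedom is just two binary sequences, so the entropy dimension becomes $\max(d,1)=1$.
\item Aperiodicity lifts through the equivariance $f(\sigma^{(a,b)}x,\sigma^a y,\sigma^b z)=\sigma^{g_{y,z}(a,b)}f(x,y,z)$.
\item Exact gluing at linear distance is obtained by gluing approximately in $X_T$ with an $O(n)$ error, then inserting $O(n)$ fault lines between the two patterns to cancel that error.
\end{itemize}
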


Remark~3 in \cite{GaSa21} raises the question what the set of possible tight gap functions of block gluing SFT is, specifically they leave open the square root function. This question is repeated by Gangloff et al.\ in \cite{ChGaHeOp26}. Specifically, in \cite{GaHeOp26}, it is shown that for the class of hom-shifts (a subclass of SFTs), the possible block gluing functions have a gap between logarithmic and linear: if the optimal block gluing function is not $O(\log(n))$, then it is $\Omega(n)$. In \cite{ChGaHeOp26}, they ask whether general SFTs might have a similar gap.

We show that the class of block gluing distances is very wide:

\begin{theorem}
For any nondecreasing picture language recognizable function $f$, the class of block gluing functions contains a function asymptotically equal to $f$.
\end{theorem}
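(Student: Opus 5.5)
The plan is to build, for a given nondecreasing picture language recognizable $f$ (between logarithmic and exponential, as in the abstract), an SFT whose configurations are made of ``frames'': rectangular or square regions surrounded by a border. Gluing inside a region is cheap, but a pattern containing a large piece of a frame forces the frame to be completed, and this costs space. The key point is that the size a frame is forced to have should be governed by $f$. Picture language recognizability of $f$ means, roughly, that there is a two-dimensional SFT (a tiling system) whose valid rectangular pictures with a distinguished border are exactly those of dimensions $n \times f(n)$, or of a comparable shape encoding $f$. I will use this tiling system as a black box inside the frames.

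First I would define the construction. Take the tiling system $T$ recognizing the picture language $\{ n \times f(n) \}$. Next form an SFT $X$ over an alphabet consisting of a ``blank'' background symbol together with the symbols of $T$ decorated with border markers. The local rules say that a border segment must continue until it closes into a rectangle. Whenever a horizontal border of length $n$ appears, the region it bounds must carry a valid $T$-picture, so its other side has length exactly $f(n)$. Rectangles may be nested or separated by blank space, and blank space is freely fillable. Including the all-blank and partial (infinite) frames keeps the subshift nonempty and closed.

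Then I would prove the two bounds that show the optimal gap function $g$ is $\Theta(f)$.

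Upper bound $g(n) = O(f(n))$. Given an $n \times n$ pattern $P$, any frame it meets either fits inside a window of size $O(f(n))$ or extends outside. In each case we can complete or truncate the frames of $P$ within distance $O(f(n))$:
\begin{itemize}
\item an unclosed horizontal border of length at most $n$ closes within $f(n)$ cells;
\item an infinite frame is replaced by closing it far away, using that $f$ is nondecreasing.
\end{itemize}
After this the pattern is surrounded by blanks, and two blank-surrounded patterns glue at any distance.

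Lower bound $g(n) = \Omega(f(n))$. Take the pattern containing a horizontal border segment of length about $n$ with its corners. It forces a vertical extent of $f(n)$. Placing a second pattern with a border crossing that region at distance $o(f(n))$ makes gluing impossible.

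The main obstacle I expect is the upper bound. Patterns may contain fragments of many frames, including pieces of frames of unknown size whose horizontal extent exceeds $n$. Cutting such a frame and redrawing it smaller must be allowed by the local rules without violating $T$'s internal constraints. To handle this I would make $T$'s content depend only on the \emph{horizontal} side length, which is determined by the corners visible near the pattern. I would also allow borders to ``terminate'' against blank space via special corner tiles, so that any visible partial frame can be reclosed consistently with the content seen in $P$.

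A second subtlety is that only asymptotic equality up to constants is claimed, so small additive and multiplicative losses from the construction are harmless. For $f$ below linear I would swap roles: the vertical side is forced to be $n$ and the horizontal side is $f^{-1}$-related, using the same technique.
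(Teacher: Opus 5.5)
Your argument aims at a weaker statement than the one claimed. Here ``asymptotically equal'' means $B(n)/f(n)\to 1$ (the relation $\approx$ from the notation section), not $B=\Theta(f)$, yet you explicitly treat multiplicative losses as harmless. Your two bounds do not even match in their natural constants. The lower bound comes from a single forced frame (about $f(n)$ rows), whereas the upper bound must complete the frames of both patterns (about $2f(n)$ rows).

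The paper instead gets an exact formula, $B(n)=\max(3,2S'(n)+1)$ in Theorem~\ref{thm:mainfunc}. For the lower bound, nonzero regions carry one of two colours. An up cup on top of $p$ and a down cup at the bottom of $q$, of opposite colours, each force $S_V$ rows on their own and must be separated by a zero row. For the upper bound, every $n\times n$ pattern extends to a full row of zeros at height $\max_{n'\le n}S_V(n')+1$. Only then is the picture language squeezed vertically by a factor $2$, giving $2S'_V\approx f$ and hence $B\approx f$.

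The second problem is that you implement the graph $\{(n,f(n))\}$. A frame can also be approached through its vertical side. A pattern showing a vertical border of height $h\le n$ with both corners forces the frame width $w$ to satisfy $f(w)=h$. For sublinear $f$ such as $\sqrt n$ this gives $w\approx h^2$, so $B_H$ behaves like $f^{-1}$ and $B$ like $\max(f,f^{-1})$. Swapping the roles of the axes only moves this to the other direction. The missing idea is to use the \emph{epigraph} of $f$ (Lemma~\ref{lem:RECtoEPI}), for which $S'_V=f$ while $S'_H=1$ (Lemma~\ref{lem:sthing}). Afterwards one takes the product with the $90$-degree rotation to equalize $B_H$ and $B_V$.

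Finally, your treatment of frames whose corners lie outside the pattern does not work as stated. An exact-shape picture of unknown width cannot be ``reclosed consistently with the content seen''. If frames could simply be truncated, your lower bound would collapse. The paper resolves this tension with the boxes SFT:
\begin{itemize}
\item a through border can be inserted anywhere;
\item $L$-adjacency is enforced only across border-free edges;
\item every fully bordered box must contain a picture of $L$;
\item the symbols $E,N,W,S$ give infinite open strips next to non-terminating blocks.
\end{itemize}
The lower bound survives because the last box before a zero row is necessarily fully bordered.
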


Here, ``picture language recognizable'' refers to functions $f$ whose graph is the set of shapes of pictures in a recognizable picture language (defined by tiling rules). This class has been studied in several references, and it is well-known that for example $\sqrt{n}$ belongs to this class. Thus, our theorem solves the problems raised in \cite{GaSa21}. We include a detailed and self-contained study of the recognizability notion in Section~\ref{sec:PictureLanguages}. (In fact, we need only that the \emph{epigraph} of $f$ is recognizable, which might be easier to show for some functions.)

Many questions remain open, for example:

\begin{question}
What are the possible (upper/lower/exact) entropy dimensions of linearly block gluing SFTs?
\end{question}

We recall that the characterization  of \cite{HoMe10} of entropies of SFTs was extended to entropy dimensions by Meyerovitch in \cite{Me11}: for $\Z^d$ SFTs, the class is is $[0, d] \cap \Delta_2^0$. Gangloff showed in \cite{Ga22} that for minimal SFTs, we similarly get $[0,d-1] \cap \Delta^0_2$ as the class of entropy dimensions.

One might guess that in the block gluing case, the characterization stays the same. However, our construction automatically gives entropy dimension $\geq 1$, and even in this range, controlling entropy dimension while having good gluing properties (as is also required by our method) seems difficult. Note that before \cite{GaSa21}, it was not even known whether transitive SFTs can realize all $\Pi^0_1$ entropies, and entropy dimension is even harder to control.

\begin{question}
\label{q:Sublinear}
Are there sublinearly block gluing aperiodic SFTs with entropy $0$?
\end{question}

The question remains open if the aperiodicity requirement is replaced by the assumption that the block gluing function is not logarithmic.

\begin{question}
Can arbitrary $\Pi^0_1$ numbers appear as entropies of sublinearly block gluing SFTs?
\end{question}



\section{Definitions}

\subsection{Notation}

We denote $\llb n \rrb = \{0, 1, \ldots, n-1\}$.
Write $f \approx g$ if $f(n)/g(n) \longrightarrow 1$ as $n \rightarrow \infty$.

\subsection{Subshifts}

Let $A$ be a finite alphabet.
The prodiscrete topology on $A^{\Z^2}$ is the topology generated by the cylinder sets $[p] = \{ x \in A^{\Z^2} \mid x|_D = p \}$ for all finite sets $D \subset \Z^2$ and patterns $p \in A^D$.
A $\Z^2$-\emph{subshift} over $A$ is a set $X \subseteq A^{\Z^2}$ that is topologically closed and invariant under the shift action of $\Z^2$, defined by $\sigma^{\vec v}(x)_{\vec n} = x_{\vec n + \vec v}$.
Every subshift is defined by some set $\mathcal{F}$ of forbidden finite patterns:
\[
X = \{ x \in A^{\Z^2} \mid \forall \vec v \in \Z^2, p \in \mathcal{F} : \sigma^{\vec v}(x) \notin [p] \}
\]
If $\mathcal{F}$ can be chosen finite, then $X$ is a \emph{shift of finite type} (SFT).
A finite pattern $p$ is \emph{globally valid} in a subshift $X$ if $X \cap [p] \neq \emptyset$.
Denote by $\mathcal{L}(X)$ the set of finite patterns that are globally valid in $X$, and by $\lang_n(X)$ the set of globally valid $n \times n$ patterns.

A \emph{Wang tile} is a unit square with colored edges.
If $T$ is a finite set of Wang tiles, $X_T \subseteq T^{\Z^2}$ is the SFT where the colors of shared edges of adjacent tiles must match.

\subsection{Gluing properties}

Let $X$ be a subshift.
The \emph{horizontal block gluing function} of $X$ is the function $B_H : \N \to \N \cup \{\infty\}$ where $B_H(n)$ is the smallest integer $k$ such that for any $p, q \in \mathcal{L}_n(X)$ we have $X \cap [p] \cap \sigma^{(-k,0)}[q] \neq \emptyset$, and $B_H(n) = \infty$ if no such $k$ exists.
In other words, any two globally valid $n \times n$ patterns can be glued together into a single globally valid $(2n+k) \times n$ pattern.
The vertical block gluing function $B_V$ is defined analogously, with $(0,-k)$ in place of $(-k,0)$.
The block gluing function of $X$ is $B(n) = \max(B_H(n), B_V(n))$.

We say $X$ is \emph{$f$-block gluing} if its block gluing function $B$ satisfies $B(n) \leq f(n)$ for all $n \geq 1$. We use similar terminology for the horizontal and vertical notions. We say $X$ is \emph{$O(f)$-block gluing} if its block gluing function satisfies $B = O(f)$, and again similarly for the horizontal and vertical notions.

\begin{remark}
When the function $f$ grows slowly, the shape of the blocks is important. For $f$ growing at least linearly, one can interpret $f$-block gluing as a quantified notion of topological mixing.

Specifically, consider the standard Cantor metric where for $x \neq y$ we set $d(x, y) = 2^{-m}$ where $m$ is minimal such that $x_{\vec v} \neq y_{\vec v}$ for $|\vec v| = 0$.

If $f = \Omega(n)$, then $O(f)$-block gluing is precisely equivalent to the fact that if $U, V$ are open sets each containing a ball of radius at least $2^{-m}$, then we have $\sigma_{\vec v}(U) \cap V \neq \emptyset$ whenever $|\vec v| = \Omega(f(m))$.
\end{remark}


It is often convenient to generalize block gluing to allow gluing rectangles of any shape, and we do this without explicit mention. Suppose $B$ is a gluing function for a subshift $X$. If $p, q \in \mathcal{L}(X)$ are rectangular patterns, and their domains are horizontally or vertically separated by at least $n$ steps, then we allow gluing them if $B(m) \geq n$, where $m$ is the maximum of all dimensions of $p$ and $q$. Then we would indeed be able to glue $p$ and $q$ in $X$ as well: simply extend both into $m \times m$ square patterns.

We say $X$ is \emph{weakly $(f,g)$-block transitive} 
if for all $p, q \in \lang_n(X)$ and $\vec v \in \Z^2$ with $\|\vec v\|_\infty \geq g(n)$,
there exists $\vec w \in \Z^2$ such that $\|\vec w - \vec v\|_\infty \leq f(n)$ and 
$X \cap [p] \cap \sigma^{-\vec w}[q] \neq \emptyset$.
In other words, for any vector $\vec v$ of length at least $g(n)$
we can find an ``error term'' $\vec u$ whose length is bounded by $f(n)$ such that the patterns $p$ and $q$ can be glued with displacement exactly $\vec v + \vec u$.
If $f(n)$ and $g(n)$ are $O(n)$, we say $X$ is weakly linearly block transitive, and if $f(n), g(n) \leq Kn$ for large enough $n$, we say $K$ is a \emph{gluing constant}. 

We say (following \cite{GaSa21}) that $X$ is \emph{$f$-net gluing} if for all $p, q \in \lang_n(X)$ there exists $\vec v \in \Z^2$ such that for all $\vec w \in (n+f(n))(\Z^2 \setminus \{(0,0)\}) + \vec v$, we have $X \cap [p] \cap \sigma^{-\vec w}[q] \neq \emptyset$.

\subsection{Entropy}

Let $X \subseteq A^{\Z^d}$ be a subshift and denote $N_k(X) = |\mathcal{L}_k(X)|$.
The \emph{topological entropy} of $X$ is
\[ h(X) = \lim_{k \rightarrow \infty} \frac{\log N_k(X)}{k}. \]
The \emph{upper entropy dimension} of $X$ is
\[ \bar{D}(X) = \limsup_{k \rightarrow \infty} \frac{\log(\log(N_k(X))}{\log k} \]
and the \emph{lower entropy dimension} is
\[ \underline{D}(X) = \liminf_{k \rightarrow \infty} \frac{\log(\log(N_k(X))}{\log k}. \]
When these agree, we call this the \emph{exact entropy dimension} $D(X)$ of $X$. We collectively refer to these three numbers as the \emph{entropy dimensions} of $X$. We note that the bases of the logarithms do not matter in the entropy dimensions, as long as the bottom logarithm has the same base as the outermost top logarithm (in the definition of entropy, the base does matter). Also, if $h(X) > 0$, then $D(X) = 2$.

\section{Shapes of picture languages}
\label{sec:PictureLanguages}

In this section we review and develop the theory of shapes of recognizable picture languages.
If the reader is only interested in the existence of superlogarithmic and sublinear block gluing functions for $\Z^2$-SFTs, they may first peruse the definitions of this section, then convince themselves that there exists a recognizable picture language such that for all $m \geq 1$, it contains a picture of shape $m \times n$ with $n = \sqrt{m} + O(1)$ (and all pictures are approximately of this shape), and immediately skip to Section~\ref{sec:NoGap}.

A \emph{picture} over a finite alphabet $A$ is a function $p : \llb m \rrb \times \llb n \rrb \to A$ where $m,n\geq 1$ (usually one also allows an empty picture, but we do not). Write $\dom(p) = \llb m \rrb \times \llb n \rrb$ for its domain. Note that we use the usual orientation of $\Z^2$ for pictures, so $m$ is the horizontal axis and the second coordinate increases upward (often one uses matrix indexing with pictures, but we mix pictures with symbolic dynamics on $\Z^2$, where this notation would be quite nonstandard). Write $A^{**}$ for the set of all pictures over alphabet $A$.

The \emph{shape} $|p|$ of a picture $p$ with $\dom(p) = \llb m \rrb \times \llb n \rrb$ is the pair $(m,n) \in \Z_+^2$. Given $(m, n) \in \Z_+^2$, we can conversely construct a unique picture with this shape over the symbol $0$, this is written $\una(m,n)$.

We assume a special symbol $\# \notin A$. Then the \emph{padded picture} $\hat p$ corresponding to $p$ is the pattern $\hat p \in (A \cup \{\#\})^{\llb m+2 \rrb \times \llb n+2\rrb}$ such that $\hat p_{\vec v + (1,1)} = p_{\vec v}$ for $\vec v \in \dom(p)$, and $\hat p_{\vec v} = \#$ for all other $\vec v \in \dom(\hat p)$.

A \emph{picture language} is a set of pictures. A \emph{domino language} is a picture language defined by \emph{forbidden patterns} $B \subset (A \cup \{\#\})^{\{0,1\} \times \{0\}} \cup (A \cup \{\#\})^{\{0\} \times \{0,1\}}$, meaning $p \in L$ if and only if the following holds: for all $q \in B$, whenever $\vec v \in \N^d$ is such that $\vec v + \dom(q) \subset \dom(\hat p)$, we have $\hat p_{\vec v + \vec u} \neq q_{\vec u}$ for some $\vec u \in \dom(q)$. The class $\REC$ of \emph{recognizable picture languages} is the closure of the class of domino languages under symbolwise projections. The definition of $\REC$ is from \cite{GiRe92}, though the class appeared earlier in \cite{InNa77}.

The class $\REC_1$ is the set of recognizable picture languages over a single letter. The \emph{shape} of a picture language $L$, denoted $|L|$, is the set of shapes of its pictures. 
Observe that the shapes of languages in $\REC_1$ are the same as those of languages in $\REC$, and the same as the shapes of domino languages. We say a subset of $\Z_+^2$ is recognizable if the corresponding language of pictures $\una(m,n)$ is in $\REC_1$.

If $f : \Z_+ \to \Z_+$ is a function, the corresponding picture language is the set of pictures $\una(n, f(n))$. This class of functions has been studied in \cite[Section~10]{GiRe96} under the name \emph{recognizable functions}. They prove in particular the following:

\begin{theorem}[\cite{GiRe96}]
Every polynomial with nonnegative integer coefficients is recognizable.
\end{theorem}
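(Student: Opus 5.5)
The plan is to reduce the statement to two closure properties of recognizable functions plus a few base cases. We need: (i) constant functions and the identity $n \mapsto n$ are recognizable; (ii) if $f,g$ are recognizable then $f+g$ is recognizable; (iii) if $f,g$ are recognizable then $f\cdot g$ is recognizable. Every polynomial with nonnegative integer coefficients is built from constants and $n$ using sums and products, so an induction on the expression tree then finishes. (A polynomial that vanishes somewhere would give shape $0$, which is not allowed. I would treat $f\geq 1$, or handle the zero polynomial and zero values by a small separate convention.)

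For the base cases, the identity corresponds to square pictures $\una(n,n)$. These are recognized by a domino language whose tiles carry a diagonal signal from the bottom-left corner. The signal must hit the top-right corner exactly, which is checked against the $\#$ border. For a constant $c$, we label the rows $1,\dots,c$ using vertical dominoes: the bottom row must be labelled $1$, labels increase by one going up, and the top row must be labelled $c$. Horizontal dominoes force each row to carry a single label.

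For the sum $f+g$, a picture of shape $(n, f(n)+g(n))$ is obtained by stacking a witness picture for $f$ of shape $(n,f(n))$ on top of a witness for $g$ of shape $(n,g(n))$. Using disjoint alphabets for the two parts, the domino rules are as follows. Internally, each part obeys its own rules. At the interface, a row of the lower alphabet may sit directly below a row of the upper alphabet only where the lower part's rules allow $\#$ above and the upper part's rules allow $\#$ below. Nothing else may be adjacent across the two alphabets, and the outer border conditions are inherited. Projection then yields the unary language. To ensure the interface is a full horizontal line, require the lower alphabet never to appear above the upper one. Horizontal dominoes across the cut must be forbidden, so each row is homogeneous in part type.

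The product $f\cdot g$ is the main obstacle. A picture of width $n$ and height $f(n)g(n)$ cannot be obtained just by gluing two witnesses. My approach is to recognize the ``multiply height by $g$'' operation as a two-stage construction. First I would show that recognizable functions are closed under composition-like stretching. Concretely, I would show that if $g$ is recognizable then so is $(n,m)\mapsto m\,g(n)$ in a suitable sense: we lay down a witness for $f$ of shape $(n,f(n))$ and then, in a stacked region, simulate $f(n)$ copies of a block of height $g(n)$. Each block is a witness for $g$ of width $n$. The blocks are separated by marker rows, and a counter signal checks that the number of blocks equals $f(n)$. The counting is the delicate part. I would propagate, through each block, a diagonal ``transport'' of the height information of the $f$-witness. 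An alternative is to rotate: use a transposed witness of $f$ (an $f(n)\times n$ picture, recognizable by symmetry of domino rules) to convert the number $f(n)$ into a width. Then a rectangle of shape $(f(n), g(n))$ is tiled with $g$-blocks, and we use a standard ``rectangle of shape $(a,b)$ gives $ab$'' gadget, where a snake-like signal sweeps through the grid of cells. I expect the grid bookkeeping with width-to-height transfer via diagonals to be the hardest step. Since this is cited from \cite{GiRe96}, I would lean on their closure lemmas for product where possible.
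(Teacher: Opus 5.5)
Your base cases are fine. The sum construction also works once you state explicitly that $\#$ may not lie below an upper-alphabet symbol nor above a lower-alphabet symbol. As written (``outer border conditions are inherited''), a lone $f$-witness or a lone $g$-witness would also be accepted, giving shapes $(n,f(n))$ and $(n,g(n))$. The paper gives no proof of its own here: it only cites Giammarresi--Restivo, and later alludes to implementing $n^k$ by stacked unary counters (cf.\ Example~\ref{ex:23}). So your proposal has to stand on its own.

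The genuine gap is the product step, which you leave unproved. It is the entire content of the theorem beyond linear polynomials. In your first sketch, ``a counter signal checks that the number of blocks equals $f(n)$'' is exactly the thing that has to be constructed, and you give no mechanism for it. A diagonal signal in a strip of width $n$ only measures lengths of order $n$ per traversal, while the number of blocks $f(n)$ is unbounded relative to $n$. Your second sketch does not fit together. Transposing $L_f$ produces pictures of width $f(n)$, which cannot be placed inside a target picture of width $n$. And a local gadget turning an $(a,b)$-rectangle into a height $ab$ is precisely the multiplication you are trying to establish. Deferring to the closure lemmas of \cite{GiRe96} amounts to citing the result.

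The missing idea is simple: stretch each row of an $f$-witness over one $g$-block. Let $L_f$ over $A_f$ and $L_g$ over $A_g$ be domino languages whose shapes are the graphs of $f$ and $g$. Use the alphabet $A_g \times A_f \times \{0,1\}$, where the bit marks the top row of a block. Impose the following rules.

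Horizontally, bits must agree, and the two components must obey the horizontal rules of $L_g$ and $L_f$ respectively, including those involving $\#$ on the left and right.

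Vertically, consider a cell $(a,b,e)$ with $(a',b',e')$ directly above it. If $e=0$, then $a$ below $a'$ must be allowed in $L_g$, and $b=b'$. If $e=1$, then $a$ must accept $\#$ above and $a'$ must accept $\#$ below in $L_g$, and $b$ below $b'$ must be allowed in $L_f$.

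At the border, a cell with $\#$ above must have $e=1$ and both components accepting $\#$ above. A cell with $\#$ below must have both components accepting $\#$ below.

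Now take a valid picture of width $n$. Its rows split into consecutive blocks, each ending at a row with bit $1$. The $A_g$-layer of each block is a picture of $L_g$, so each block has height exactly $g(n)$. The $A_f$-layer is constant on each block, and keeping one row per block gives a picture of $L_f$, so there are exactly $f(n)$ blocks. Conversely, any two witnesses of width $n$ combine into such a picture. Hence the shape is exactly the graph of $fg$.

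Given closure under sums, you only need $n^k = n\cdot n^{k-1}$, since $c\,n^k$ is a $c$-fold sum. This is the case where $L_f$ is the diagonal-square language. Its layer is then a marker moving one column to the right per block of height $n^{k-1}$, which is exactly the counter construction depicted in the paper's figures.
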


It is already observed in \cite{GiRe96} that this class is essentially closed under inversion, and that for instance $\sqrt{n}$ is also recognizable. However, the proof is simply that one can rotate the pictures by $90$ degrees, which gives only a partial function. For our application, it is crucial to have control on the shapes of pictures, and to have full projection to both coordinates. If $f : \Z_+ \to \Z_+$ is a function, then its \emph{hypograph} is $\{(m,n) \;|\; n \leq f(m)\}$, and its \emph{epigraph} is $\{(m,n) \;|\; n \geq f(m)\}$.

\begin{example}
\label{ex:IdEpi}
The epigraph of the identity function is recognizable. Consider the domino language $L_{\textrm{idepi}}$ over alphabet $\{0,1,2\}$ whose allowed patterns (i.e.\ the only patterns not forbidden) are
\[\left\{[\#1], [\#2], [02], [21], [10], [2\#], [1\#], [00], [0\#]\right\}\]
horizontally and 
\[ \{ \begin{bmatrix}1\\\#\end{bmatrix}, \begin{bmatrix}0\\\#\end{bmatrix}, \begin{bmatrix}\#\\2\end{bmatrix}, \begin{bmatrix}\#\\1\end{bmatrix}, \begin{bmatrix}0\\0\end{bmatrix}, \begin{bmatrix}1\\0\end{bmatrix}, \begin{bmatrix}2\\1\end{bmatrix}, \begin{bmatrix}2\\2\end{bmatrix}\} \]
vertically. 

Starting the deduction from the bottom left corner, and proceeding row by row, from left to right on each row, one sees that the tiling of an $m \times n$ box is unique, and the top row is valid only if $n \geq m$. A valid picture is illustrated in Figure~\ref{fig:idepi}.

\begin{figure}
\begin{center}
\begin{tikzpicture}[scale=0.6]
\draw[gray] (0,0) grid (7,8);
\draw[thick] (1,1) rectangle (6,7);

\node () at (0.5, 0.5) {$\#$};
\node () at (1.5, 0.5) {$\#$};
\node () at (2.5, 0.5) {$\#$};
\node () at (3.5, 0.5) {$\#$};
\node () at (4.5, 0.5) {$\#$};
\node () at (5.5, 0.5) {$\#$};
\node () at (6.5, 0.5) {$\#$};
\node () at (0.5, 1.5) {$\#$};
\node () at (1.5, 1.5) {$1$};
\node () at (2.5, 1.5) {$0$};
\node () at (3.5, 1.5) {$0$};
\node () at (4.5, 1.5) {$0$};
\node () at (5.5, 1.5) {$0$};
\node () at (6.5, 1.5) {$\#$};
\node () at (0.5, 2.5) {$\#$};
\node () at (1.5, 2.5) {$2$};
\node () at (2.5, 2.5) {$1$};
\node () at (3.5, 2.5) {$0$};
\node () at (4.5, 2.5) {$0$};
\node () at (5.5, 2.5) {$0$};
\node () at (6.5, 2.5) {$\#$};
\node () at (0.5, 3.5) {$\#$};
\node () at (1.5, 3.5) {$2$};
\node () at (2.5, 3.5) {$2$};
\node () at (3.5, 3.5) {$1$};
\node () at (4.5, 3.5) {$0$};
\node () at (5.5, 3.5) {$0$};
\node () at (6.5, 3.5) {$\#$};
\node () at (0.5, 4.5) {$\#$};
\node () at (1.5, 4.5) {$2$};
\node () at (2.5, 4.5) {$2$};
\node () at (3.5, 4.5) {$2$};
\node () at (4.5, 4.5) {$1$};
\node () at (5.5, 4.5) {$0$};
\node () at (6.5, 4.5) {$\#$};
\node () at (0.5, 5.5) {$\#$};
\node () at (1.5, 5.5) {$2$};
\node () at (2.5, 5.5) {$2$};
\node () at (3.5, 5.5) {$2$};
\node () at (4.5, 5.5) {$2$};
\node () at (5.5, 5.5) {$1$};
\node () at (6.5, 5.5) {$\#$};
\node () at (0.5, 6.5) {$\#$};
\node () at (1.5, 6.5) {$2$};
\node () at (2.5, 6.5) {$2$};
\node () at (3.5, 6.5) {$2$};
\node () at (4.5, 6.5) {$2$};
\node () at (5.5, 6.5) {$2$};
\node () at (6.5, 6.5) {$\#$};
\node () at (0.5, 7.5) {$\#$};
\node () at (1.5, 7.5) {$\#$};
\node () at (2.5, 7.5) {$\#$};
\node () at (3.5, 7.5) {$\#$};
\node () at (4.5, 7.5) {$\#$};
\node () at (5.5, 7.5) {$\#$};
\node () at (6.5, 7.5) {$\#$};

\end{tikzpicture}
\end{center}
\caption{A picture in $L_{\textrm{idepi}}$ of shape $(5, 6)$.}
\label{fig:idepi}
\end{figure}
\end{example}

For us, the epigraph of a function is what allows its implementation as a block gluing function. In this section, we prove lemmas that provide ways to obtain functions with recognizable epigraphs, from known results on recognizable functions. First, epigraphs are no more difficult to implement than graphs:

\begin{lemma}
\label{lem:RECtoEPI}
Let $f$ be a recognizable function. Then the epigraph of $f$ is recognizable.
\end{lemma}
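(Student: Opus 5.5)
We need pictures of shape $(m,n)$ with $n\ge f(m)$, given a domino language $L$ over an alphabet $A$ whose shape set is exactly $\{(m,f(m)) : m\geq 1\}$. The plan is to build a new domino language $L'$ whose pictures of shape $(m,n)$ consist of a bottom part of height $f(m)$ carrying a picture of $L$, and a top part of arbitrary height $n-f(m)\geq 0$ filled with a fresh blank symbol. Projecting $L'$ to a single letter then gives exactly the epigraph.

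\textbf{Alphabet.} Take $A' = A \cup \{\ast\}$ with $\ast$ a new symbol. To let the old $L$-picture detect its top border, mark symbols of $A$ with a flag telling whether the cell is in the top row of the $L$-part. Concretely, use $A\times\{0,1\}$ together with $\ast$.

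\textbf{Horizontal constraints.} A domino $[(a,i)(b,j)]$ is allowed iff $[ab]$ is allowed in $L$ and $i=j$. We keep the boundary dominoes $[\#(a,i)]$ and $[(a,i)\#]$ whenever $[\#a]$, $[a\#]$ are allowed in $L$. We also allow $[\ast\ast]$, $[\#\ast]$ and $[\ast\#]$.

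\textbf{Vertical constraints.} For cells inside the $L$-part:
\begin{itemize}
\item $\begin{bmatrix}(b,j)\\(a,0)\end{bmatrix}$ is allowed iff $\begin{bmatrix}b\\a\end{bmatrix}$ is allowed in $L$.
\item Nothing of the form $(\cdot,\cdot)$ may sit above $(a,1)$.
\item $\begin{bmatrix}(a,i)\\\#\end{bmatrix}$ is allowed iff $\begin{bmatrix}a\\\#\end{bmatrix}$ is.
\end{itemize}
For the top border of the $L$-part, $\begin{bmatrix}\ast\\(a,1)\end{bmatrix}$ and $\begin{bmatrix}\#\\(a,1)\end{bmatrix}$ are allowed exactly when $\begin{bmatrix}\#\\a\end{bmatrix}$ is allowed in $L$. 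For the blank region we allow $\begin{bmatrix}\ast\\\ast\end{bmatrix}$ and $\begin{bmatrix}\#\\\ast\end{bmatrix}$. The combinations $\begin{bmatrix}\ast\\\#\end{bmatrix}$, $\begin{bmatrix}\ast\\(a,0)\end{bmatrix}$ and $\begin{bmatrix}(b,j)\\\ast\end{bmatrix}$ are forbidden.

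\textbf{Correctness.} Given a picture of $L'$, look at its columns. Each column is a nonempty run of $A\times\{0,1\}$ symbols followed by $\ast$'s: the bottom cell cannot be $\ast$, and nothing non-$\ast$ can sit above $\ast$.

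The horizontal rule forbids $\ast$ next to an $A$-symbol, so within each row either all cells are $\ast$ or none are. The same rule forces flags to agree along rows. The flag-$1$ row is exactly the last non-$\ast$ row, because flag $1$ forces $\ast$ or $\#$ above it and flag $0$ forbids both.

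Hence the non-$\ast$ part is a rectangle $m\times k$ whose padded version satisfies all constraints of $L$. Here the roles of the $\#$ above are played by $\ast$ or $\#$, and these are checked through the flag-$1$ row. So this part lies in $L$ and $k=f(m)$. Conversely, any picture of $L$ topped by $n-f(m)$ rows of $\ast$ is valid in $L'$.

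Therefore $|L'|$ is exactly the epigraph of $f$. Since shapes of domino languages are recognizable (as observed above), the lemma follows.

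The main care point is the top-border simulation: the constraints of $L$ involving $\#$ above the top row must still be enforced when the row above is $\ast$. The flag handles this, because a domino rule cannot otherwise distinguish "top row of the $L$-part" from an interior row. Everything else is routine checking.
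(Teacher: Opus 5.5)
Your construction is correct and is essentially the paper's proof: a fresh blank symbol fills some set of top rows (row-uniform, absent from the bottom row, only blanks above blanks), and the domino rules of $L$ are imposed on the remaining $A$-symbols with blanks playing the role of $\#$. The one difference is your flag bit, which is harmless but redundant: contrary to your closing remark, the vertical domino with $\ast$ directly above $a$ already identifies the top row of the $L$-part, so the paper simply allows that domino exactly when $\#$ above $a$ is allowed in $L$.
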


\begin{proof}
Let the graph of $f$ be given as the shape of a domino language $L$ over an alphabet $A$. Consider a new alphabet $A \sqcup \{0\}$ where $0 \notin A$, where the local rules force that $0$-symbols have $0$s above and to the left and right. Then the pictures have $0$s on some set (possibly empty) of top virtual rows, and symbols from $A$ on lower virtual rows. Further require that the bottom row has symbols from $A$ (by forbidding $0$ from having $\#$ below it). Impose the domino rules of $L$ on $A$-symbols, treating $0$s as if they were $\#$s, when they occur in dominoes with $A$-symbols. It is easy to see that we obtain precisely the epigraph of $f$ as the set of shapes.
\end{proof}

Often, it is easier to construct the graph of a function $f$ than to construct its inverse, and Lemma~\ref{lem:Deterministic} and Lemma~\ref{lem:kapprox} deal with the problem of inverting a function. The idea is as in \cite{GiRe96} to rotate the language by $90$ degrees. This works somewhat more smoothly when working with hypographs and epigraphs. Specifically, for natural functions like $n^2$, the $90$ degree rotation of the picture language of the hypograph corresponds to the epigraph of the inverse function.

Unfortunately, the hypograph of a recognizable function is not in general recognizable, under a standard complexity-theoretic conjecture (see Example~\ref{ex:HypoNOT}). We prove however that it is recognizable when either the implementation as a picture language is nicely behaved, or the function itself is sufficiently nicely behaved. Both of these apply to the standard implementation of polynomials with positive integer valued coefficients through deterministic counters.

First, we consider nicely-behaved picture language implementations. We say a domino language $L$ is \emph{upward deterministic} if whenever $p, q \in A^{**}$ are of the same width, agree on the bottom $k \geq 0$ rows, and no domino rules are broken on the $k+2$ bottom rows of the padded pictures $\hat p$ or $\hat q$, then $p$ and $q$ also agree on the $(k+1)$th row (starting indexing of rows from $1$). This implies that for any two pictures in $L$ of equal width, one is a northward extension of the other.
The following lemma is essentially weaker than Lemma~\ref{lem:gdivf}, but is easier to prove.

\begin{lemma}
\label{lem:Deterministic}
Let $f$ be a recognizable function, and suppose its graph is the shape of a domino language which is upward deterministic. Then the hypograph of $f$ is recognizable.
\end{lemma}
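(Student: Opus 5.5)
The plan is to build, from the upward deterministic domino language $L$ over $A$ whose shapes are exactly $\{(m,f(m))\}$, a new domino language whose pictures of width $m$ are the bottom $n$ rows of the unique $L$-picture of width $m$, for any $1 \le n \le f(m)$. Projecting away the extra information then gives the hypograph. Upward determinism makes this possible: every $L$-picture of width $m$ is the northward extension of the others, and the bottom rows of any picture that breaks no rules are forced to coincide with those of the genuine picture.

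Concretely, I would take the alphabet $A \times \{\mathrm{c}, \mathrm{t}\}$, marking cells as ``cut'' or ``true top''. The constraints are as follows.
\begin{itemize}
\item Every vertical and horizontal domino inside the picture must satisfy the rules of $L$ on the $A$-components, and so must dominoes involving $\#$ on the left, right and bottom borders.
\item The second component is constant along rows. Vertically, a $\mathrm{t}$ may only have $\#$ above it, so only the top row can be $\mathrm{t}$.
\item If the top row is marked $\mathrm{t}$, the rules of $L$ involving $\#$ above are imposed. If it is marked $\mathrm{c}$, no constraint with the top border is imposed.
\end{itemize}
The all-$\mathrm{c}$ pictures alone do not suffice, since they are only guaranteed to be prefixes of some rule-respecting growth, which might never terminate. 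The extra step is to ensure that a valid picture of width $m$ and height $n$ with top row $\mathrm{c}$ truly extends to the $L$-picture of width $m$, i.e.\ that $n \le f(m)$.

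To handle this, I would encode a vertical window into a taller picture. The picture of height $n$ lives in the bottom part, and a second layer records the full $L$-picture of width $m$ and height $f(m)$, with a boundary line marking row $n$ where the first layer is cut off. The simplest version takes symbols in $A \times \{0,1\}$, where $1$ marks rows at or below the cut. We then ask for a picture of shape $(m,f(m))$ in $L$ with a horizontal cut line, rows marked $1$ forming a nonempty bottom block. This picture has height $f(m)$, not $n$, so it does not directly give the right shape.

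The fix is to make each symbol at position $(x,y)$ with $y<n$ store the entire column segment above it; this is infeasible with finite alphabets. Instead I would use upward determinism as follows. Deterministic growth means row $k+1$ is a function of row $k$ (given row $k$ is valid together with rows below). Whether the growth from row $n$ eventually reaches a valid top is not locally checkable in the truncated picture. The cleaner route is therefore to rotate the picture: pack the rows above $n$ into the available space. Since $n \le f(m)$ is arbitrary, there is no room in general, so this packing must be avoided.

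Reconsidering, the intended construction is likely the following. Take a picture of shape $(m,f(m))$ in $L$, and let a second layer mark a downward-closed set of rows. The hypograph is then obtained by a projection that reads rows not as height but via an identification $n \mapsto$ the marked count. Since shapes must be literal, I would instead build the hypograph picture of shape $(m,n)$ by \emph{folding}: superimpose rows $y$ and $y+n$ of the $L$-picture as tracks, using two tracks and alternating-direction snake folding so that the full $L$-picture of height $f(m)$ fits into height $n$ with $\lceil f(m)/n\rceil$ layers. This is unbounded in general, so it is infeasible.

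The main obstacle is precisely certifying $n \le f(m)$ locally. My first attempt would be to test whether upward determinism plus a bounded column alphabet allows checking that the deterministic growth does not fail before row $n$. The point is that failure would be a local rule violation, detected immediately, and the top-border rules of $L$ only matter at the true top. If every row reachable without violation lies below some valid top, i.e.\ growth never dead-ends, then the all-$\mathrm{c}$ construction already works. I would aim to show dead-ends can be eliminated by first modifying $L$, which I expect to be the hard part.
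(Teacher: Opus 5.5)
\textbf{There is a genuine gap: the proposal never reaches a construction.} You correctly reject the all-$\mathrm{c}$ language and then abandon the window, folding and packing ideas. The decisive step is left as ``the hard part''.

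The obstacle is also framed backwards. You ask whether the growth from row $n$ eventually reaches a valid top, and note this is not locally checkable. That question never needs to be asked. Since $f$ is total, $L$ has a picture $p$ of every width $m$. By upward determinism, any picture of width $m$ that breaks no rule of $L$ except possibly at the top border agrees with $p$ on its bottom $\min(n,f(m))$ rows. So the only way to violate $n \le f(m)$ is to have \emph{already passed} the valid top: row $f(m)$ of your picture equals the top row of $p$, so every one of its cells admits $\#$ above it. The all-$\mathrm{c}$ language fails precisely because the deterministic growth may continue past row $f(m)$, not because it might fail to terminate. What must be certified lies below the current top, not above it.

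\textbf{The missing idea.} ``Row $i$ is not a complete top row'' is an existential, local statement, so it can be guessed and checked.
\begin{itemize}
\item Give every cell outside the top row two extra bits.
\item The second bit of a cell equals the first bit of its right neighbour.
\item The first bit is $0$ at the left border and the second bit is $1$ at the right border.
\item Only $(0,0)$, $(0,1)$ and $(1,1)$ are allowed.
\item A cell carrying $(0,1)$ must hold an $A$-symbol that cannot have $\#$ directly above it under the rules of $L$.
\end{itemize}
This forces every non-top row to contain such a witness cell. The top row carries no bits and no top-border constraint.

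\textbf{Both inclusions then follow.}
\begin{itemize}
\item If $n \le f(m)$, take the bottom $n$ rows of $p$. Each row $i < f(m)$ has a witness, since otherwise the bottom $i$ rows would form a picture of $L$ of width $m$ and height $i \neq f(m)$.
\item If $n > f(m)$, upward determinism forces row $f(m)$ of the picture to coincide with the top row of $p$. That row then has no possible witness, which is a contradiction.
\end{itemize}
In your terminology, this is exactly the modification of $L$ that rules out rule-respecting rows above a valid top. It is easy once you check downward rather than upward.
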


\begin{proof}
Let $L$ be the implementation of $f$ with an upward deterministic domino language, with alphabet $A$. Construct a new domino language $L'$ over alphabet $(A \times \{0,1\}^2) \sqcup A$. Each cell has an $A$-symbol, and some cells have two extra bits. We enforce the domino rules of $L$ on the $A$-symbols.

We use additional domino rules to ensure the following:
\begin{itemize}
\item The top row has $A$-symbols without extra bits, and all other rows have the extra bits (this is only a function of whether the top neighbor is $\#$).
\item The second bit of a cell $\vec v$ is always equal to the first bit of the right neighbor $\vec v + (1,0)$ unless either is $\#$.
\item The first bit is $0$ on the leftmost cell of each row, and the second bit is $1$ on the rightmost cell of each row.
\item The first and second bit of each cell are one of $(0,0), (0,1), (1,1)$, and if a cell has bits $(0,1)$, the top neighbor $\#$ would break the local domino rules of $L$.
\end{itemize}
This has the effect that each row except the topmost one has a sequence of bits ``on the edges'' between cells (thinking of the first and second bit as being on the edges of the cell).
Each row has a unique cell where the bit changes from $0$ to $1$, and this cell does not admit $\#$ as a top neighbor in $L$.

We claim that this implements the hypograph. To see this, first suppose $(m, n)$ is in the hypograph, so that $n \leq f(m)$. Then there is a picture $p \in L$ with $|p| = (m, f(m))$. Take the bottom $n$ rows of this picture. Since $L$ implements the graph of $f$, certainly the bottommost $i$ rows do not form a valid picture for any $i < n$. Since they are the bottommost $i$ rows of the valid picture $p$ and $i < f(m)$, there must be a cell on the $i$th row that does not admit $\#$ as its top neighbor. This allows us to pick the sequence of bits on this row, so as to get a valid picture in $L'$. (Note that in $(m, f(m))$, each cell of the top row can be matched with $\#$, but the sequence of bits is not required on this row.)

Next, consider $(m, n) \in |L'|$, say $|q| = (m, n)$ for $q \in L'$. Since $|L|$ is the graph of $f$, there is a picture $p \in L$ of shape $|p| = (m, f(m))$. By upward determinism, the bottommost $i$ rows of $p$ and $q$ are equal when we ignore the extra bits, as long as $i \leq \min(f(m), n)$. In particular, if $n \geq f(m)$, then the $f(m)$th row of $q$ from the bottom is precisely the top row of $p$. Since $p \in L$, each cell of this row must allow $\#$ as its top neighbor. So we must have $n \leq f(m)$ in this case, as otherwise the extra bits on $q$ could not possibly exist. We conclude that $(m, n)$ is in the hypograph of $f$.
\end{proof}

Next, we consider nicely-behaved functions. The \emph{$k$-approximate hypograph} of $f$ is $\{(m,n) \;|\; n \leq f(m) \leq kn\}$.

\begin{lemma}
\label{lem:kapprox}
Let $f : \Z_+ \to \Z_+$ be a recognizable function. Then its $k$-approximate hypograph is recognizable.
\end{lemma}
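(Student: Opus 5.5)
The plan is to \emph{fold} a picture of $L$ of shape $(m, f(m))$ into a box of shape $(m,n)$ using $k$ layers. Let the graph of $f$ be the shape of a domino language $L$ over an alphabet $A$. Since $n \leq f(m) \leq kn$, the $f(m)$ rows of such a picture can be cut into at most $k$ consecutive blocks of $n$ rows, the last possibly partial. These blocks are stacked on top of each other inside an $m \times n$ box, in boustrophedon fashion.

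Concretely, I would define a domino language $L'$ over the alphabet $(A \sqcup \{\square\})^k$, where $\square$ is a fresh ``blank'' symbol. Track $i$ of the cell at height $j$ (rows indexed $1,\ldots,n$) is meant to contain the following row of the original picture $p$, or $\square$ if that row does not exist:
\begin{itemize}
\item row $(i-1)n + j$ of $p$ if $i$ is odd;
\item row $(i-1)n + (n+1-j)$ of $p$ if $i$ is even.
\end{itemize}
The domino rules of $L'$ would be as follows.
\begin{enumerate}
\item Horizontal dominoes of $L$ are enforced trackwise. Here $\square$ may only be horizontally adjacent to $\square$ or $\#$, so blank rows are uniform within each track.
\item Vertical dominoes of $L$ are enforced trackwise inside each track. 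For odd tracks the lower cell is the lower row of $p$; for even tracks the roles are reversed.
\item At the folds, the connecting vertical domino is checked locally within single cells. In a cell with $\#$ above it, track $i$ (odd) is checked against track $i+1$ as a vertical domino. In a cell with $\#$ below it, track $i$ (even) is checked against track $i+1$.
\item Track $1$ in the bottom row is checked against $\#$ below, which encodes the bottom border of $\hat p$.
\item Blanks must form a suffix in the reading order of $p$. Wherever a non-blank symbol is followed in reading order by $\square$ (or by a fold into a blank track), the domino (symbol, $\#$) is checked. This encodes the top border of $\hat p$. All these conditions are local, since successive rows of $p$ sit in vertically adjacent cells of the same track, or in the same cell at a fold.
\item Track $1$ contains no blanks. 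This forces $f(m) \geq n$.
\end{enumerate}
Projecting everything to the unary symbol $0$ then gives a language in $\REC_1$.

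For correctness there are two directions.
\begin{itemize}
\item If $n \leq f(m) \leq kn$, take $p \in L$ of shape $(m, f(m))$ and fold it as described. All of $p$ fits into the $k$ tracks, track $1$ is full, and every rule above is satisfied because $\hat p$ satisfies the rules of $L$.
\item Conversely, given a valid picture of $L'$ of shape $(m,n)$, unfold the nonblank rows in reading order. Rules 1--5 guarantee that the unfolded picture, padded with $\#$, satisfies all domino rules of $L$. Its height $h$ is then a height of a picture in $L$ of width $m$, so $h = f(m)$ because $|L|$ is the graph of $f$. The unfolded picture fits into $k$ tracks of height $n$, so $h \leq kn$. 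Rule 6 gives $h \geq n$.
\end{itemize}

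I expect the main obstacle to be the bookkeeping at the end of $p$, which can fall in the middle of a track or exactly at a fold. In particular one must make sure that the ``(symbol, $\#$)'' check of rule 5 is triggered exactly once and in the correct orientation for even tracks. A clean way to handle this would be to attach to each cell one extra bit per track, marking whether that track-row is the last nonblank row of $p$. The local rules then propagate this bit horizontally along the row, and force the track-row following it in reading order to be blank.
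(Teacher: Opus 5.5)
Your construction is the same as the paper's: $k$ layers of virtual cells per actual cell, every second layer inverted and glued to its neighbours at the top or bottom boundary. A fresh symbol (your blank, the paper's $0$) fills a suffix of the virtual rows and is treated as $\#$ by the rules of $L$. The first layer is required to be blank-free to get $n \leq f(m)$.

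One boundary condition is missing, though. Rule 4 closes off the bottom of track $1$, but nothing closes off the far end of track $k$. That end is the row of track $k$ in the cells just below the top boundary if $k$ is odd, and just above the bottom boundary if $k$ is even. There, rule 3 refers to a nonexistent track $k+1$, and rule 5 only fires when a nonblank row is followed by a blank one. So if all $k$ tracks are full, the last row of the unfolded picture is never checked against $\#$ above it, and your claim that rules 1--5 give a valid $\hat p$ fails.

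For instance, when $f(m) > kn$, the bottom $kn$ rows of a picture of shape $(m,f(m))$ fold into a valid element of your language. Then $(m,n)$ would be accepted although it is not in the $k$-approximate hypograph.

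The fix is to check the last virtual row against $\#$ (equivalently, treat the missing track $k+1$ as blank). This is what the paper means by gluing ``the last layers to the boundary''. With that added, your argument goes through.
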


\begin{proof}
We use $k$ layers of virtual cells in each actual cell, and on every second layer, we vertically invert the virtual cell so that its bottom is actually on the top. Gluing the $(2i+1)$th layer to the $(2i+2)$th layer on the top (starting indexing of layers from $1$), and the $(2i+2)$th to the $(2i+3)$th on the bottom, and then the the last layers to the boundary, we can simulate a height-$kn$ picture when the actual height is $n$. Furthermore, each virtual cell knows which $k$th of $kn$ it belongs to (i.e.\ which layer it belongs to).

On top of this construction, we can implement the $k$-approximate hypograph as follows. We let $L$ be a domino language over alphabet $A$, whose shape is $\{(m, f(m)) \;|\; m \in \Z_+\}$. We consider on the virtual cells a picture language over alphabet $A \sqcup \{0\}$ where $0 \notin A$, where the local rules force that $0$-symbols have $0$s above and to the left and right (everything in the relative sense of the virtual cells). Then the pictures have $0$s on some set (possibly empty) of top virtual rows, and symbols from $A$ on lower virtual rows. We now additionally impose the domino rules of $L$, treating $0$ as if they were $\#$, when it occurs in dominoes with $A$-symbols. Finally, we require that the first layer contains only $A$-symbols to check $n \leq f(m)$.
\end{proof}

\begin{lemma}
\label{lem:kthing}
Let $f : \Z_+ \to \Z_+$ be an eventually nondecreasing recognizable function. Suppose that for some $k' \in \Z_+$, we have $f(m+1) \leq k'f(m) + 1$ for all large enough $m$. Then the hypograph of $f$ is recognizable.
\end{lemma}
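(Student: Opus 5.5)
The idea is to write the hypograph as a finite union of recognizable sets. The main piece comes from Lemma~\ref{lem:kapprox} applied on a left sub-rectangle of the picture, and the remaining pieces are finite or essentially one-dimensional. I use the standard fact that $\REC$ is closed under finite unions, which holds by taking disjoint alphabets and the union of the domino rules. I also use that every finite subset of $\Z_+^2$ is recognizable, and that for a fixed height $n$ and a regular set $R \subseteq \Z_+$ of widths, $R \times \{n\}$ is recognizable (a finite automaton runs along the rows and the row index is counted with finitely many symbols).

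Let $m_0$ be such that $f$ is nondecreasing on $[m_0,\infty)$ and $f(m+1) \leq k'f(m)+1$ for $m \geq m_0$. Set $k = k'$ (if $k' = 1$, use $k = 2$ instead). The key combinatorial step is the following. Suppose $m \geq m_0$ and $f(m_0) < n \leq f(m)$, and let $m'$ be the least integer in $[m_0,m]$ with $f(m') \geq n$. Then $m' > m_0$, so $f(m'-1) < n$, and therefore
\[ n \leq f(m') \leq k'(n-1)+1 \leq kn. \]
Thus $(m',n)$ lies in the $k$-approximate hypograph. Conversely, if $m_0 \leq m' \leq m$ and $n \leq f(m')$, then $n \leq f(m)$ by monotonicity. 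Hence
\[ H_1 = \{(m,n) \mid n > f(m_0),\ \exists m' \in [m_0,m] : n \leq f(m') \leq kn\} \]
is exactly the part of the hypograph with $m \geq m_0$ and $n > f(m_0)$.

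To recognize $H_1$, let $L_k$ be the domino language from Lemma~\ref{lem:kapprox}, with alphabet $A_k$. The new alphabet is $A_k \sqcup \{\ast\}$, where $\ast$ is a filler symbol. The local rules say the following.
\begin{itemize}
\item A $\ast$ has only $\ast$ or $\#$ to its right, and $\ast$ or $\#$ above and below it. So the $\ast$-cells form a full-height block of columns on the right, possibly empty.
\item The rules of $L_k$ hold on $A_k$-cells, with an adjacent $\ast$ treated as $\#$.
\item A finite counter in the first $m_0$ columns forces the $A_k$-part to have width at least $m_0$.
\item To force $n > f(m_0)$, the lower bound on height is imposed by a vertical counter up to $f(m_0)+1$. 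Alternatively, one intersects with the recognizable set $\{n > f(m_0)\}$, since $\REC$ is closed under intersection of shape sets via products of alphabets.
\end{itemize}
The set of shapes of this language is then $H_1$.

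It remains to handle the pairs with $m < m_0$ or $n \leq f(m_0)$. The pairs with $m < m_0$ and $n \leq f(m)$ form a finite set. For each of the finitely many heights $n \leq f(m_0)$, the set $R_n = \{m \geq m_0 \mid f(m) \geq n\}$ is either empty or an up-set in $[m_0,\infty)$, by monotonicity. So $R_n$ is regular, and $R_n \times \{n\}$ is recognizable. Taking the union of all these pieces with $H_1$ gives the hypograph.

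The step I expect to need the most care is the gluing of the sub-picture from Lemma~\ref{lem:kapprox} with the filler region. One must check that treating the $\ast$-boundary as $\#$ does not add or remove pictures of $L_k$, and that Lemma~\ref{lem:kapprox} indeed gives a \emph{domino} language, possibly after a projection that we can absorb into the alphabet. Everything else is bookkeeping with finite counters.
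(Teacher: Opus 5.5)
Your proof is correct and is essentially the paper's argument: you recognize the $k$-approximate hypograph by Lemma~\ref{lem:kapprox}, pad it on the right with a filler block of full columns, and use $f(m+1)\leq k'f(m)+1$ to find a witness column. The differences are cosmetic: you handle the ``eventually'' hypotheses by splitting off a finite set and the bounded-height strip $n\leq f(m_0)$, and you take the least $m'\geq m_0$ with $f(m')\geq n$, whereas the paper makes $f$ globally nondecreasing by a finite modification, takes $k=\max(f(1),k')$, and picks the largest $\ell<m$ with $f(\ell)\leq kn$.
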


\begin{proof}
We may assume that $f$ is nondecreasing,
since recognizable sets are easily seen to be closed under finite modifications.
We may also assume $f(m+1) \leq k'f(m) + 1$ is true for all $m \in \Z_+$, by simply increasing the value of $k'$.

Let now $H$ be the hypograph of $f$, and let $H'$ be the $k$-approximate hypograph of $f$, where $k = \max(f(1), k')$. By Lemma~\ref{lem:kapprox}, $H'$ is recognizable by some domino language $L$ on alphabet $A$.

We apply a similar construction as in Lemma~\ref{lem:kapprox} using a symbol $0 \notin A$, but rotated $90$ degrees. Specifically, on the alphabet $A \sqcup \{0\}$, require that all cells to the right, above and below $0$s are $0$, the bottom left corner of the picture contains an $A$-symbol, and treat $0$-neighbors of $A$-symbols as containing $\#$ for the domino rules of $L$.
Hence, the set
\[
H'' = \{(m, n) \;|\; \exists \ell \leq m: (\ell, n) \in H'\} = \{(m, n) \;|\; \exists \ell \leq m: n \leq f(\ell) \leq kn\}
\]
we obtain in this way is recognizable.

We claim that $H''$ is equal to the hypograph $H$ of $f$.
First, if $(m, n) \in H''$, then for some $\ell \leq m$ we have $n \leq f(\ell) \leq kn$. In particular, $n \leq f(\ell) \leq f(m)$ since $f$ is nondecreasing, so $(m,n) \in H$.

Conversely, let $(m,n) \in H$, i.e.\ $n \leq f(m)$.
If $f(m) \leq kn$, then $(m,n) \in H' \subset H''$ and we are done.
Suppose then that $f(m) > kn$ and consider the largest $1 \leq \ell < m$ with $f(\ell) \leq kn$; since $f(1) \leq k \leq kn$, such an $\ell$ exists.
By the assumption on $f$, we have
\[
k' f(\ell) + 1 \geq f(\ell+1) \geq kn + 1 \geq k'n+1,
\]
which implies $f(\ell) \geq n$.
Hence $(\ell,n) \in H'$ and $(m,n) \in H''$.
\end{proof}

\begin{example}
We illustrate the above lemmas for $f(n) = n^2$. In this case $f(n+1) = n^2 + 2n + 1 \leq 3n^2 + 1$ for all $n$, so we can take $k' = 3$. Since $f(1) = 1$, we can take $k = 3$. (Note also that we do not need the preprocessing on $f$ discussed in the proof.) First, we have by assumption the recognizability of the graph of $f$. In Lemma~\ref{lem:kapprox}, we construct the $3$-approximate hypograph $H'$ of $f$, i.e.\ the set $\{(m,n) \;|\; n \leq m^2 \leq 3n\}$. We then extend the set to $H''$, by including every $(m+\ell, n)$ such that $(m,n)$ is already in the graph. These sets are illustrated in Figure~\ref{fig:fsteps}.
\begin{figure}
\begin{center}
\begin{tikzpicture}[scale=0.5]
\draw[fill] (0,0) rectangle (1,1);
\draw[fill] (1,3) rectangle (2,4);
\draw[fill] (2,8) rectangle (3,9);
\draw[fill] (3,15) rectangle (4,16);

\draw[fill,black!60!white] (1,1) rectangle (2,3);
\draw[fill,black!60!white] (2,2) rectangle (3,8);
\draw[fill,black!60!white] (3,5) rectangle (4,15);
\draw[fill,black!60!white] (4,8) rectangle (5,18);
\draw[fill,black!60!white] (5,11) rectangle (6,18);
\draw[fill,black!60!white] (6,16) rectangle (7,18);

\draw[fill,black!30!white] (1,0) rectangle (8,1);
\draw[fill,black!30!white] (2,1) rectangle (8,2);
\draw[fill,black!30!white] (3,2) rectangle (8,5);
\draw[fill,black!30!white] (4,5) rectangle (8,8);
\draw[fill,black!30!white] (5,8) rectangle (8,11);
\draw[fill,black!30!white] (6,11) rectangle (8,16);
\draw[fill,black!30!white] (7,16) rectangle (8,18);
\draw (0,0) grid (8,18);
\end{tikzpicture}
\end{center}
\caption{In black, we show the two-dimensional $10$-by-$16$ prefix of the graph of $n^2$. In dark gray, we show $H' \setminus H$. In light gray, we show $H'' \setminus H'$. The hypograph of $n^2$ is precisely the shaded area $H''$.}
\label{fig:fsteps}
\end{figure}

This illustrates the simple geometric idea: the submultiplicativity assumption on $f$ that $f(m+1) \leq k'f(m) + 1$ guarantees that in the $k$-approximate hypograph, the new pairs in $H'$ descend far enough down from the graph of $f$ so that the cells below them, which are also supposed to be in the hypograph, can also be obtained by extending lower values of the graph to the right. (And since $f$ is nondecreasing, doing this does not add any pairs that do not belong to the hypograph.)
\end{example}

The following formalizes the idea that the rotation by $90$ degrees turns the hypograph of a function into the epigraph of the inverse function.

\begin{lemma}
\label{lem:InverseEpigraph}
Let $f : \R_+ \to \R_+$ be an increasing function whose restriction to positive integers takes values in positive integers. Suppose that the hypograph of $f|_{\Z_+} : \Z_+ \to \Z_+$ is recognizable. Then the epigraph of $\lceil f^{-1} \rceil|_{\Z_+} : \Z_+ \to \Z_+$ (the ceiling of the inverse function) is recognizable.
\end{lemma}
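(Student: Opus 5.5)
The plan is to rotate the picture language by $90$ degrees and then check that the resulting set of shapes is exactly the epigraph of $g = \lceil f^{-1} \rceil|_{\Z_+}$. Recognizable sets of shapes are closed under transposition $(m,n) \mapsto (n,m)$. Indeed, if $L$ is a domino language whose shape is the hypograph $H = \{(m,n) \;|\; n \leq f(m)\}$, we reflect every picture across the diagonal. We also swap the horizontal and vertical forbidden dominoes, transposing each one. The result is again a domino language, and its shape is $H^T = \{(n,m) \;|\; n \leq f(m)\}$. Renaming variables, $H^T = \{(m,n) \;|\; m \leq f(n)\}$.

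It remains to show that $H^T$ equals the epigraph $\{(m,n) \;|\; n \geq g(m)\}$. Fix positive integers $m,n$. Since $f$ is increasing, it is injective and order-preserving on $\R_+$.

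Suppose $m \leq f(n)$. There are two cases.
\begin{itemize}
\item If $m$ lies in the range of $f$, then $f^{-1}(m) \leq n$ by monotonicity. Since $n$ is an integer, this gives $\lceil f^{-1}(m) \rceil \leq n$.
\item If $m$ does not lie in the range of $f$, we need a convention for $f^{-1}$. I would interpret $f^{-1}(m)$ as $\inf\{x \;|\; f(x) \geq m\}$. This coincides with the true inverse whenever $m$ is in the range, so the statement is unaffected in the continuous bijective case. Since $f(n) \geq m$, we get $f^{-1}(m) \leq n$, hence $\lceil f^{-1}(m)\rceil \leq n$.
\end{itemize}

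Conversely, suppose $n \geq \lceil f^{-1}(m) \rceil \geq f^{-1}(m)$. In the bijective case, applying the increasing function $f$ gives $f(n) \geq f(f^{-1}(m)) = m$. In the generalized-inverse case, $n \geq \inf\{x \;|\; f(x) \geq m\}$. If $n$ is strictly larger than the infimum, some $x < n$ has $f(x) \geq m$, so $f(n) > f(x) \geq m$. If $n$ equals the infimum, we need $f(n) \geq m$ at the boundary point itself, which requires some right-continuity or continuity.

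Finally, $g$ must map $\Z_+$ to $\Z_+$. This holds because $f^{-1}(m) > 0$, so its ceiling is at least $1$.

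The main obstacle is this precise handling of the inverse when $f$ is not surjective onto $[1,\infty)$. I would first read the hypothesis as giving a continuous increasing bijection of $\R_+$, so that $f^{-1}$ is genuine and the equivalence $m \leq f(n) \iff f^{-1}(m) \leq n \iff \lceil f^{-1}(m) \rceil \leq n$ is immediate. Only if needed would I add the generalized-inverse remark above. The transposition step itself is routine.
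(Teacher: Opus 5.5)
Your proposal is correct and follows the paper's proof: transpose the recognizable hypograph, then use $m \leq f(n) \iff f^{-1}(m) \leq n \iff \lceil f^{-1}(m) \rceil \leq n$ for integer $n$ (the paper cites closure under $90$-degree rotation; your diagonal reflection yields the same transposed set of shapes). The paper tacitly makes the reading you settle on, that $f^{-1}(m)$ is a genuine preimage for every positive integer $m$, and your caveat about the infimum convention is justified: a strictly increasing $f$ that jumps just to the right of an integer breaks the equivalence, e.g.\ $f(x)=x$ for $x\leq 1$, $f(x)=x+1$ for $x>1$, with $m=2$.
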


\begin{proof}
The transpose of the hypograph of $f$ is recognizable, since recognizable picture language are closed under $90$ degree rotation. This is the set
\begin{align*}
\{(m,n) \;|\; m \leq f(n)\} &= \{(m,n) \;|\; n \geq f^{-1}(m) \} \\
&= \{(m,n) \;|\; n \geq \lceil f^{-1}(m) \rceil \}
\end{align*}
where the last equality is because $n$ is an integer. This is by definition the restriction of the epigraph of $\lceil f^{-1} \rceil$.
\end{proof}

\begin{example}
\label{ex:HypoNOT}
We show that if the hypograph of every recognizable function $f : \Z_+ \to \Z_+$ is recognizable, then $\text{NE} = \text{co-NE}$. These denote the nondeterministic and co-nondeterministic exponential time classes $\text{NTIME}(O(k^n))$ and $\text{co-NTIME}(O(k^n))$ respectively. It is well-known \cite{DuJoMaMo12} that $\text{NE} = \text{co-NE}$ is equivalent to $\text{NP} \cap \text{UN} = \text{co-NP} \cap \text{UN}$ by a padding argument, where UN is the class of all languages over alphabet $\{1\}$. (The correspondence is $1^n \in \{1\}^* \leftrightarrow w \in \{0,1\}^*$ where $w$ is the binary representation of $n$.)

Let $L \in \text{NP} \cap \text{UN}$, and suppose $1^n \in L$ can be verified in time $n^d$ by a nondeterministic Turing machine $T$ for an integer constant $d \geq 1$. Then we build a recognizable function $f$ fitting the following description: If $m = k^d$ for some $k$, and $1^k$ is accepted by the machine $T$ in time $k^d$, then $f(m) = m$. If $m$ is not of the form $k^d$, then also $f(m) = m$. If $m = k^d$ for some $d$, and $T$ does not accept $1^k$ in time $k^d$, then $f(m) > m$.

We explain below how to achieve this. First, let us explain how the hypograph of this function being recognizable implies the complexity-theoretic claim. Indeed suppose $K$ is a picture language whose shape is the hypograph of $f$. We observe that $1^k \notin L$ if and only if $(k^d, k^d+1)$ is in the hypograph of $f$. But this can be verified by exhibiting a picture of this shape in $K$, which in turn can be done in time $O(k^{2d})$ by a nondeterministic Turing machine. We conclude that $L^c$ is in co-NP.

We only explain informally how to construct the picture language. Observe that we can prove that the width is of the form $m = k^d$ by running $d$ unary counters from left to right on the $k$ bottommost rows. We can then send diagonal signals to have $k$ marked on the leftmost cells of the bottom row. Now we can simply carry out a successful computation of the Turing machine on this input, and ensure square shapes by using a diagonal signal, to realize the $n = f(k^d) = k^d$ part of the graph.

We can prove that $m$ is not of the form $k^d$ by guessing a number $k$ such that $k^d < m < (k+1)^d$, and running two counters (on independent tracks of the alphabet), so that one has time to finish strictly before reaching the right side of the picture, and the other does not finish. On these inputs, we can again realize square shapes by a diagonal signal.

Finally, on inputs of the form $m = k^d$ where the Turing machine does not finish in time, we prove that no computation of length $k^d$ succeeds. For this, we go through all possible words of length $k^d$ over $Q \times \Sigma$, and for each of them, we check (by a classical deterministic computation) whether they are the valid trace (the sequence of the tape symbols and states of the head during the computation) of an accepting computation of $T$ on input $1^k$. Note that to check a trace is valid, it suffices to check that whenever a tape symbol is first encountered, its symbol is consistent with input $1^k$, that when the machine revisits a cell it sees the symbol it wrote there last time, and that the sequence of states evolves according to the (nondeterministic) transition function of $T$.

After we have checked that no word is the trace of an accepting computation, we accept the input $k^d$ (by allowing a transition to $\#$s on the top of the picture), giving a unique picture of shape $(k^d, f(k^d)) = (k^d, \sum_{w \in (Q \times \Sigma)^{m^d}} t(w))$ where $t(w)$ is the time our deterministic computation takes to verify that the word $w$ does not represent a valid computation. (Of course, we can alternatively have $f$ take a larger but nicer value, by running a counter on another track.) \qee
\end{example}

We now show another method based for constructing epigraphs based on determinism, which naturally gives the epigraph of a function of the form $\Theta(n^r)$ for $r$ a positive rational number.

\begin{lemma}
\label{lem:gdivf}
Let $f : \R_+ \to \R_+$ be an increasing function with $f(n) \geq n$ for all $n \in \Z_+$, whose restriction to $\Z_+$ takes values in $\Z_+$ and is recognizable with upward deterministic picture language. Let $g : \Z_+ \to \Z_+$ be a nondecreasing function with $g(n) \geq n$ for all $n$, which is recognizable. Then $m \mapsto g(\lfloor f^{-1}(m) \rfloor)$ has recognizable epigraph.
\end{lemma}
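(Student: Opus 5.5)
The idea is to certify the two facts that determine the value $k = \lfloor f^{-1}(m) \rfloor$ inside a single $m \times n$ picture, on independent tracks. Since $f$ is increasing and integer valued on $\Z_+$, $k$ is the largest integer with $f(k) \leq m$, so $f(k) \leq m < f(k+1)$. The shape $(m,n)$ lies in the target epigraph iff there is such a $k$ with $n \geq g(k)$. The widths $m < f(1)$, where $k = 0$, form a finite set of columns, and I would treat them separately as a finite modification. Since $f(k) \geq k$ and $g(k) \geq k$, we always have $k \leq m$ and $k \leq n$. So we may nondeterministically guess $k$ as the height of a horizontal line in the bottom-left region. We also guess a vertical line at column $k$. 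We tie the two together by a diagonal signal from the bottom-left corner, which forces the marked region to be a $k \times k$ square.

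\textbf{Step 1 ($f(k) \leq m$).} On the bottom $k$ rows we place the $90$-degree rotation of a picture of $L$ (the upward deterministic domino language realizing the graph of $f$). This rotated picture has height $k$ and width $f(k)$. To its right we fill with a symbol $0$, treated as $\#$ by the rotated rules, as in Lemma~\ref{lem:RECtoEPI} and Lemma~\ref{lem:kthing}. This is possible exactly when $f(k) \leq m$.

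\textbf{Step 2 ($m < f(k+1)$).} On another track we run the rotated $L$ at height $k+1$ across all $m$ columns. After rotation, upward determinism becomes rightward determinism. Since a valid picture of width $f(k+1)$ exists, the unique continuation never gets stuck before column $f(k+1)$. To force $m < f(k+1)$, we use the bit trick of Lemma~\ref{lem:Deterministic}, transposed. Each column carries a $0\to1$ bit change at a cell that does not admit $\#$ to its right. This certifies that no column among the first $m$ is a valid right boundary, i.e.\ $f(k+1) \neq j$ for all $j \leq m$.

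\textbf{Step 3 ($n \geq g(k)$).} In the leftmost $k$ columns, over the full height $n$, we place a picture of the epigraph language of $g$ from Lemma~\ref{lem:RECtoEPI}. The column-$k$ line acts as its right boundary, so such a picture exists iff $n \geq g(k)$.

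Soundness then follows: any valid picture yields some $k$ with $f(k) \leq m < f(k+1)$, which is exactly $k = \lfloor f^{-1}(m) \rfloor$, together with $n \geq g(k)$. Completeness is immediate by choosing this $k$ and filling in the canonical pictures.

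The main obstacle is Step 2 when $k+1$ rows are not available, i.e.\ when $n = k$, which can happen if $g(k) = k$. I would handle this by folding: row $k$ carries a second layer simulating row $k+1$, in the same way virtual layers are stacked in Lemma~\ref{lem:kapprox}. A second obstacle is making sure the bit trick correctly expresses ``not yet terminated'' under rotation. Both the top boundary of the $(k+1)$-row strip and its right boundary at column $m$ must be handled with the padding conventions. Concretely, the top row of the strip must see the $k$-line (or fold) as $\#$, and the rightmost column must be allowed to face the real $\#$ only in the bits sense, not in the domino sense.
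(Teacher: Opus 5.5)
Your proposal is correct and follows the paper's proof: you guess $k$ with a diagonal signal, run two rotated copies of the upward deterministic language of $f$ so that the $k$-computation finishes (possibly at column $m$) and the $(k+1)$-computation does not, and check the epigraph of $g$ on the leftmost $k$ columns. Your $0$-filler, the transposed bit trick from Lemma~\ref{lem:Deterministic}, and the folding for the case $n = k$ are sound elaborations of that construction; the last one covers a case the paper's terse argument glosses over, since its $(k+1)$-row computation literally needs $n \geq k+1$.
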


\begin{proof}
We define a domino language $L$ as follows.
In a picture of shape $(m, n)$, we guess a number $k$ by marking the $k$ bottommost cells on the leftmost column and the $k$ leftmost cells on the bottommost row, synchronized by a diagonal signal. We implement two independent copies of the domino language of $f$ rotated by $90$ degrees, so the computation is deterministic to the right, and we check that $f(k) \leq m$, $f(k+1) > m$ by checking that the first computation has time to finish (possibly on the rightmost row), and the second does not. Next, we mark the $k$ leftmost columns with a special color, and in this $k \times n$ region, check a picture from the language realizing the epigraph of $g$ (which by Lemma~\ref{lem:RECtoEPI} is recognizable). 

We claim that $|L|$ is the epigraph of $m \mapsto g(\lfloor f^{-1}(m) \rfloor)$. Namely, suppose $m \in \Z_+$, $k = \lfloor f^{-1}(m) \rfloor$ and $n = g(k)$. We need to show $(m, \ell) \in |L|$ for any $\ell \geq n$. Observe first that $k \leq \min(m, n)$, so we can indeed mark the $k$ cells on the leftmost column and bottommost row. Since $k = \lfloor f^{-1}(n) \rfloor$, we have $k \leq f^{-1}(n) < k+1$, so $f(k) \leq n < f(k+1)$. Then the unique deterministic computation to the right succeeds for $k$ and fails for $k+1$, as required. Since $n = g(k)$, the pair $(k, \ell)$ is on the epigraph of $g$, and thus we can find a picture to put on the $k$ leftmost columns.

Conversely, consider any picture $p \in L$ of shape $(m, n)$. By the determinism of the domino language of $f$, the only possible choice of $k$ is $k = \lfloor f^{-1}(m) \rfloor$. Since we check a picture from the language realizing the epigraph of $g$ on the leftmost $k$ columns, $(k, n)$ must be on the epigraph of $g$, meaning
\[ g(\lfloor f^{-1}(m) \rfloor) = g(k) \leq n. \qedhere \]
\end{proof}

\begin{example}
\label{ex:23}
We show with semi-abstract pictures what the construction looks like for $f(k) = k^3, g(k) = k^2$, when $k = 3$ and $(m, n) = (30, 11)$. Note that $\lfloor f(m) \rfloor = \lfloor \sqrt[3]{30}\rfloor = 3$ and $11 \geq 9 = g(3)$, so this should indeed be on the epigraph. We mark $k$ cells on the left boundary and the bottom boundary, and synchronize their sizes by diagonal signals.

We mark $k$ cells on the left boundary, and check $f(k) \leq m < f(k+1)$ with black, gray and white dots. The calculation for $k = 3$ has time to end, but the one for $k = 4$ does not. For readability, we have put the $k = 4$ calculation on the rows 4--7 instead of on another track on the bottom $4$ rows (this could also be done in the proof, but it requires $g(k) \geq 2k+1$). This is illustrated in Figure~\ref{fig:fpart}. Note that the movement of the dots requires long-range signals, which are hidden from the figure, but their implementation is standard.

\begin{figure}
\begin{center}
\begin{tikzpicture}[scale=0.33]
\draw[line width=3,black!30!white] (0,0) rectangle (30,3);
\draw[line width=3,black!30!white] (0,3) rectangle (30,7);

\draw[fill=black] (0.2,0.4) circle (0.2);
\draw[fill=gray] (0.5,0.5) circle (0.2);
\draw[fill=white] (0.7,0.6) circle (0.2);
\draw[fill=black] (1.2,0.4) circle (0.2);
\draw[fill=gray] (1.5,0.5) circle (0.2);
\draw[fill=white] (1.7,1.6) circle (0.2);
\draw[fill=black] (2.2,0.4) circle (0.2);
\draw[fill=gray] (2.5,0.5) circle (0.2);
\draw[fill=white] (2.7,2.6) circle (0.2);
\draw[fill=black] (3.2,0.4) circle (0.2);
\draw[fill=gray] (3.5,1.5) circle (0.2);
\draw[fill=white] (3.7,0.6) circle (0.2);
\draw[fill=black] (4.2,0.4) circle (0.2);
\draw[fill=gray] (4.5,1.5) circle (0.2);
\draw[fill=white] (4.7,1.6) circle (0.2);
\draw[fill=black] (5.2,0.4) circle (0.2);
\draw[fill=gray] (5.5,1.5) circle (0.2);
\draw[fill=white] (5.7,2.6) circle (0.2);
\draw[fill=black] (6.2,0.4) circle (0.2);
\draw[fill=gray] (6.5,2.5) circle (0.2);
\draw[fill=white] (6.7,0.6) circle (0.2);
\draw[fill=black] (7.2,0.4) circle (0.2);
\draw[fill=gray] (7.5,2.5) circle (0.2);
\draw[fill=white] (7.7,1.6) circle (0.2);
\draw[fill=black] (8.2,0.4) circle (0.2);
\draw[fill=gray] (8.5,2.5) circle (0.2);
\draw[fill=white] (8.7,2.6) circle (0.2);
\draw[fill=black] (9.2,1.4) circle (0.2);
\draw[fill=gray] (9.5,0.5) circle (0.2);
\draw[fill=white] (9.7,0.6) circle (0.2);
\draw[fill=black] (10.2,1.4) circle (0.2);
\draw[fill=gray] (10.5,0.5) circle (0.2);
\draw[fill=white] (10.7,1.6) circle (0.2);
\draw[fill=black] (11.2,1.4) circle (0.2);
\draw[fill=gray] (11.5,0.5) circle (0.2);
\draw[fill=white] (11.7,2.6) circle (0.2);
\draw[fill=black] (12.2,1.4) circle (0.2);
\draw[fill=gray] (12.5,1.5) circle (0.2);
\draw[fill=white] (12.7,0.6) circle (0.2);
\draw[fill=black] (13.2,1.4) circle (0.2);
\draw[fill=gray] (13.5,1.5) circle (0.2);
\draw[fill=white] (13.7,1.6) circle (0.2);
\draw[fill=black] (14.2,1.4) circle (0.2);
\draw[fill=gray] (14.5,1.5) circle (0.2);
\draw[fill=white] (14.7,2.6) circle (0.2);
\draw[fill=black] (15.2,1.4) circle (0.2);
\draw[fill=gray] (15.5,2.5) circle (0.2);
\draw[fill=white] (15.7,0.6) circle (0.2);
\draw[fill=black] (16.2,1.4) circle (0.2);
\draw[fill=gray] (16.5,2.5) circle (0.2);
\draw[fill=white] (16.7,1.6) circle (0.2);
\draw[fill=black] (17.2,1.4) circle (0.2);
\draw[fill=gray] (17.5,2.5) circle (0.2);
\draw[fill=white] (17.7,2.6) circle (0.2);
\draw[fill=black] (18.2,2.4) circle (0.2);
\draw[fill=gray] (18.5,0.5) circle (0.2);
\draw[fill=white] (18.7,0.6) circle (0.2);
\draw[fill=black] (19.2,2.4) circle (0.2);
\draw[fill=gray] (19.5,0.5) circle (0.2);
\draw[fill=white] (19.7,1.6) circle (0.2);
\draw[fill=black] (20.2,2.4) circle (0.2);
\draw[fill=gray] (20.5,0.5) circle (0.2);
\draw[fill=white] (20.7,2.6) circle (0.2);
\draw[fill=black] (21.2,2.4) circle (0.2);
\draw[fill=gray] (21.5,1.5) circle (0.2);
\draw[fill=white] (21.7,0.6) circle (0.2);
\draw[fill=black] (22.2,2.4) circle (0.2);
\draw[fill=gray] (22.5,1.5) circle (0.2);
\draw[fill=white] (22.7,1.6) circle (0.2);
\draw[fill=black] (23.2,2.4) circle (0.2);
\draw[fill=gray] (23.5,1.5) circle (0.2);
\draw[fill=white] (23.7,2.6) circle (0.2);
\draw[fill=black] (24.2,2.4) circle (0.2);
\draw[fill=gray] (24.5,2.5) circle (0.2);
\draw[fill=white] (24.7,0.6) circle (0.2);
\draw[fill=black] (25.2,2.4) circle (0.2);
\draw[fill=gray] (25.5,2.5) circle (0.2);
\draw[fill=white] (25.7,1.6) circle (0.2);
\draw[fill=black] (26.2,2.4) circle (0.2);
\draw[fill=gray] (26.5,2.5) circle (0.2);
\draw[fill=white] (26.7,2.6) circle (0.2);

\draw[fill=black] (0.2,3.4) circle (0.2);
\draw[fill=gray] (0.5,3.5) circle (0.2);
\draw[fill=white] (0.7,3.6) circle (0.2);
\draw[fill=black] (1.2,3.4) circle (0.2);
\draw[fill=gray] (1.5,3.5) circle (0.2);
\draw[fill=white] (1.7,4.6) circle (0.2);
\draw[fill=black] (2.2,3.4) circle (0.2);
\draw[fill=gray] (2.5,3.5) circle (0.2);
\draw[fill=white] (2.7,5.6) circle (0.2);
\draw[fill=black] (3.2,3.4) circle (0.2);
\draw[fill=gray] (3.5,3.5) circle (0.2);
\draw[fill=white] (3.7,6.6) circle (0.2);
\draw[fill=black] (4.2,3.4) circle (0.2);
\draw[fill=gray] (4.5,4.5) circle (0.2);
\draw[fill=white] (4.7,3.6) circle (0.2);
\draw[fill=black] (5.2,3.4) circle (0.2);
\draw[fill=gray] (5.5,4.5) circle (0.2);
\draw[fill=white] (5.7,4.6) circle (0.2);
\draw[fill=black] (6.2,3.4) circle (0.2);
\draw[fill=gray] (6.5,4.5) circle (0.2);
\draw[fill=white] (6.7,5.6) circle (0.2);
\draw[fill=black] (7.2,3.4) circle (0.2);
\draw[fill=gray] (7.5,4.5) circle (0.2);
\draw[fill=white] (7.7,6.6) circle (0.2);
\draw[fill=black] (8.2,3.4) circle (0.2);
\draw[fill=gray] (8.5,5.5) circle (0.2);
\draw[fill=white] (8.7,3.6) circle (0.2);
\draw[fill=black] (9.2,3.4) circle (0.2);
\draw[fill=gray] (9.5,5.5) circle (0.2);
\draw[fill=white] (9.7,4.6) circle (0.2);
\draw[fill=black] (10.2,3.4) circle (0.2);
\draw[fill=gray] (10.5,5.5) circle (0.2);
\draw[fill=white] (10.7,5.6) circle (0.2);
\draw[fill=black] (11.2,3.4) circle (0.2);
\draw[fill=gray] (11.5,5.5) circle (0.2);
\draw[fill=white] (11.7,6.6) circle (0.2);
\draw[fill=black] (12.2,3.4) circle (0.2);
\draw[fill=gray] (12.5,6.5) circle (0.2);
\draw[fill=white] (12.7,3.6) circle (0.2);
\draw[fill=black] (13.2,3.4) circle (0.2);
\draw[fill=gray] (13.5,6.5) circle (0.2);
\draw[fill=white] (13.7,4.6) circle (0.2);
\draw[fill=black] (14.2,3.4) circle (0.2);
\draw[fill=gray] (14.5,6.5) circle (0.2);
\draw[fill=white] (14.7,5.6) circle (0.2);
\draw[fill=black] (15.2,3.4) circle (0.2);
\draw[fill=gray] (15.5,6.5) circle (0.2);
\draw[fill=white] (15.7,6.6) circle (0.2);
\draw[fill=black] (16.2,4.4) circle (0.2);
\draw[fill=gray] (16.5,3.5) circle (0.2);
\draw[fill=white] (16.7,3.6) circle (0.2);
\draw[fill=black] (17.2,4.4) circle (0.2);
\draw[fill=gray] (17.5,3.5) circle (0.2);
\draw[fill=white] (17.7,4.6) circle (0.2);
\draw[fill=black] (18.2,4.4) circle (0.2);
\draw[fill=gray] (18.5,3.5) circle (0.2);
\draw[fill=white] (18.7,5.6) circle (0.2);
\draw[fill=black] (19.2,4.4) circle (0.2);
\draw[fill=gray] (19.5,3.5) circle (0.2);
\draw[fill=white] (19.7,6.6) circle (0.2);
\draw[fill=black] (20.2,4.4) circle (0.2);
\draw[fill=gray] (20.5,4.5) circle (0.2);
\draw[fill=white] (20.7,3.6) circle (0.2);
\draw[fill=black] (21.2,4.4) circle (0.2);
\draw[fill=gray] (21.5,4.5) circle (0.2);
\draw[fill=white] (21.7,4.6) circle (0.2);
\draw[fill=black] (22.2,4.4) circle (0.2);
\draw[fill=gray] (22.5,4.5) circle (0.2);
\draw[fill=white] (22.7,5.6) circle (0.2);
\draw[fill=black] (23.2,4.4) circle (0.2);
\draw[fill=gray] (23.5,4.5) circle (0.2);
\draw[fill=white] (23.7,6.6) circle (0.2);
\draw[fill=black] (24.2,4.4) circle (0.2);
\draw[fill=gray] (24.5,5.5) circle (0.2);
\draw[fill=white] (24.7,3.6) circle (0.2);
\draw[fill=black] (25.2,4.4) circle (0.2);
\draw[fill=gray] (25.5,5.5) circle (0.2);
\draw[fill=white] (25.7,4.6) circle (0.2);
\draw[fill=black] (26.2,4.4) circle (0.2);
\draw[fill=gray] (26.5,5.5) circle (0.2);
\draw[fill=white] (26.7,5.6) circle (0.2);
\draw[fill=black] (27.2,4.4) circle (0.2);
\draw[fill=gray] (27.5,5.5) circle (0.2);
\draw[fill=white] (27.7,6.6) circle (0.2);
\draw[fill=black] (28.2,4.4) circle (0.2);
\draw[fill=gray] (28.5,6.5) circle (0.2);
\draw[fill=white] (28.7,3.6) circle (0.2);
\draw[fill=black] (29.2,4.4) circle (0.2);
\draw[fill=gray] (29.5,6.5) circle (0.2);
\draw[fill=white] (29.7,4.6) circle (0.2);

\draw (0,0) grid (30, 11);
\end{tikzpicture}
\end{center}
\caption{Illustration of the checks $f(k) \leq m < f(k+1)$ by standard counter constructions.} 
\label{fig:fpart}
\end{figure}

We now use the $k$ leftmost columns to check a picture realizing the epigraph of $g$ using black and white dots. These are illustrated in Figure~\ref{fig:gpart}. Note that the diagonal signals synchronizing vertical and horizontal choices of $k$ are not not visible in the pictures. \qee
\begin{figure}
\begin{center}
\begin{tikzpicture}[scale=0.33]
\draw[line width=4,black!30!white] (0,0) rectangle (3,11);




\draw[fill=black] (0.4,0.3) circle (0.2);
\draw[fill=white] (0.6,0.7) circle (0.2);
\draw[fill=black] (0.4,1.3) circle (0.2);
\draw[fill=white] (1.6,1.7) circle (0.2);
\draw[fill=black] (0.4,2.3) circle (0.2);
\draw[fill=white] (2.6,2.7) circle (0.2);
\draw[fill=black] (1.4,3.3) circle (0.2);
\draw[fill=white] (0.6,3.7) circle (0.2);
\draw[fill=black] (1.4,4.3) circle (0.2);
\draw[fill=white] (1.6,4.7) circle (0.2);
\draw[fill=black] (1.4,5.3) circle (0.2);
\draw[fill=white] (2.6,5.7) circle (0.2);
\draw[fill=black] (2.4,6.3) circle (0.2);
\draw[fill=white] (0.6,6.7) circle (0.2);
\draw[fill=black] (2.4,7.3) circle (0.2);
\draw[fill=white] (1.6,7.7) circle (0.2);
\draw[fill=black] (2.4,8.3) circle (0.2);
\draw[fill=white] (2.6,8.7) circle (0.2);

\draw (0,0) grid (30, 11);
\end{tikzpicture}
\end{center}
\caption{Illustration of the check $g(k) \leq n$ by a standard counter construction.} 
\label{fig:gpart}
\end{figure}
\end{example}

We are now ready to implement epigraphs of rational powers.

\begin{proposition}
\label{prop:PosRatPow}
Let $r \in \Q$ be a positive rational number. Then there is a function $f(n) \approx n^r$ whose epigraph is recognizable.
\end{proposition}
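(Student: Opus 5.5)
Write $r = p/q$ with $p, q$ positive integers. The plan is to apply Lemma~\ref{lem:gdivf} with $f(k) = k^q$ and $g(k) = k^p$. This gives a function $m \mapsto \lfloor m^{1/q} \rfloor^p$ with recognizable epigraph, and $\lfloor m^{1/q}\rfloor^p \approx m^{p/q} = m^r$ because $\lfloor m^{1/q}\rfloor / m^{1/q} \to 1$. The remaining work is to check the hypotheses of Lemma~\ref{lem:gdivf}.

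For $g(k) = k^p$, the function is nondecreasing and satisfies $g(k) \geq k$ for $k \geq 1$. It is recognizable as a polynomial with nonnegative integer coefficients, by the theorem of \cite{GiRe96}.

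For $f(k) = k^q$ on $\R_+$, the function is increasing, satisfies $f(k) \geq k$ on $\Z_+$, and maps integers to integers. The one hypothesis that needs an argument is that the graph of $k \mapsto k^q$ is the shape of an \emph{upward deterministic} domino language. The cited theorem only gives recognizability, so I will construct this language explicitly. I expect this to be the main obstacle.

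The construction is the standard nested counter, as pictured in Figure~\ref{fig:fpart} rotated by $90$ degrees. On a picture of width $k$, each row carries $q$ dots, each in a column from $0$ to $k-1$. The bottom row has all dots in column $0$. Each row is obtained from the row below by incrementing the $q$-digit base-$k$ odometer. The top row is the unique one with all dots in column $k-1$, which gives height exactly $k^q$.

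This can be done with an alphabet that records, for each of the $q$ tracks, whether the dot lies left of, at, or right of the current cell. It also records carry information, which is propagated horizontally within a row from the least significant track upward. The new row is then a function of the row below, plus carry bits that are themselves determined by the row below together with the left and right boundary $\#$. To keep upward determinism, the carry must be computable from the row below only. So I will encode the carry within the lower row's symbols, as "this dot is at the right end," detected locally via the $\#$ to its right and propagated leftward along the row. Each symbol of row $i+1$ is then forced by its lower neighbour together with horizontal dominoes in row $i+1$ alone.

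The top row is allowed to have $\#$ above it only when every dot is in the last column. Lower rows forbid $\#$ above whenever some track is not at its end. This realizes exactly the graph of $k^q$. If checking upward determinism in this strict domino sense becomes fiddly, the fallback is to use Lemma~\ref{lem:kthing} instead. Since $(k+1)^q \leq 2^q k^q + 1$, that lemma gives the hypograph of $k^q$ directly. Lemma~\ref{lem:InverseEpigraph} then gives the epigraph of $\lceil m^{1/q} \rceil$, but composing with $g$ still seems to need Lemma~\ref{lem:gdivf}, so the deterministic counter is the preferred route.

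Finally, I will verify the asymptotics: $(m^{1/q}-1)^p \leq \lfloor m^{1/q}\rfloor^p \leq m^{p/q}$, so the ratio of $\lfloor m^{1/q}\rfloor^p$ to $m^r$ tends to $1$.
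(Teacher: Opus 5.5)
Your proposal is correct and follows the paper's proof essentially verbatim. You write $r$ as a quotient of positive integers and apply Lemma~\ref{lem:gdivf} with $f(k)=k^q$ and $g(k)=k^p$, obtaining $\lfloor m^{1/q}\rfloor^p \approx m^r$. The one difference is that the paper simply asserts $k^q$ is realized by an upward deterministic domino language, pointing to the counters of Example~\ref{ex:HypoNOT}, whereas you sketch that nested-counter construction explicitly. That is a reasonable way to fill in the step, provided carry out of the most significant track is forbidden, so that only $\#$ can sit above the all-at-end row.
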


\begin{proof}
We have $r = a/b$ where $a, b \geq 1$. The function $f(x) = x^b$ is increasing, satisfies $f(n) \geq n$ for all $n \in \Z_+$, takes positive integers to positive integers, and (as in Example~\ref{ex:HypoNOT}) is recognizable with an upward deterministic domino language.
The function $g(n) = n^a$ is also nondecreasing and recognizable, with $g(n) \geq n$ for all $n$.
By Lemma~\ref{lem:gdivf}, $h(n) = \lfloor \sqrt[b]{n} \rfloor^a$ has recognizable epigraph. Clearly $h \approx n^r$. 
\end{proof}

The key point of studying epigraphs is that in our implementation, the following optimization problems pop up naturally.

\begin{definition}
Let $L \subset A^{**}$ be a picture language. Define 
\[ S_H(n) = \inf\{m \;|\; (m,n) \in |L|\} \in \N \cup \{\infty\} \]
as the \emph{horizontal size function}, and
\[ S'_H(n) = \sup\{S_H(n') \;|\; n' \leq n, S_H(n') < \infty \} \]
as the \emph{relaxed horizontal size function}. Symmetrically we define the vertical variants
\[ S_V(n) = \inf\{m \;|\; (n,m) \in |L|\} \in \N \cup \{\infty\} \]
\[ S'_V(n) = \sup\{S_V(n') \;|\; n' \leq n, S_V(n') < \infty \}. \]
Finally define $S(n) = \max(S_H(n), S_V(n))$.
\end{definition}

For an epigraph, these reduce to triviality:

\begin{lemma}
\label{lem:sthing}
Let $f : \Z_+ \to \Z_+$ be a nondecreasing function whose epigraph is recognizable. Then there exists a domino language such that
\[ S'_V(n) = f(n) \]
and 
\[ S'_H(n) = 1 \]
for all $n$.
\end{lemma}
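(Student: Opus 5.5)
We take the domino language $L$ to be one whose shape is exactly the epigraph $E = \{(m,n) \mid n \geq f(m)\}$ of $f$. Recognizability of $E$ gives such a language in $\REC_1$. As observed after the definition of $\REC$, the shapes of recognizable languages are also shapes of domino languages, so we may take $L$ to be a domino language with $|L| = E$. (Any language with shape $E$ will do, since the size functions depend only on the shape.) The rest is a direct computation of the four size functions from the shape $E$.

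First, the vertical functions. For fixed $n$, the set $\{m \mid (n,m) \in E\}$ is $\{m \mid m \geq f(n)\}$. Its infimum is $f(n)$, so $S_V(n) = f(n) < \infty$ for every $n$. Hence
\[ S'_V(n) = \sup\{ f(n') \mid n' \leq n \} = f(n), \]
where the last equality uses that $f$ is nondecreasing.

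Next, the horizontal functions. For fixed $n$, the set $\{m \mid (m,n) \in E\}$ is $\{m \mid f(m) \leq n\}$. Since $f$ is nondecreasing, this set is an initial segment of $\Z_+$. It is nonempty exactly when $f(1) \leq n$, and then its infimum is $1$. Otherwise $S_H(n) = \infty$. So every finite value of $S_H(n')$ equals $1$, and $S'_H(n) = 1$ whenever some $n' \leq n$ has $n' \geq f(1)$, i.e.\ whenever $n \geq f(1)$.

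The only point needing care is small $n < f(1)$. There every $S_H(n')$ with $n' \leq n$ is infinite, so $S'_H(n)$ is a supremum of the empty set, which is $0$ (or undefined) rather than $1$. I would resolve this in one of two ways. The first is to read the supremum of the empty set as harmless, adopting the convention that makes the lemma's equation hold. The second is to modify the language by finitely many shapes: adjoin the pictures of shapes $(1,n)$ for $n < f(1)$. The shape set stays recognizable, since adjoining finitely many shapes is a finite modification and recognizable sets are closed under finite modifications, as used in Lemma~\ref{lem:kthing}. Then we again pass to a domino language realizing it.

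With this modification $S_H(n) = 1$ for all $n$, so $S'_H(n) = 1$ for all $n$. The vertical computation changes only at $n = 1$, where $S_V(1)$ becomes $1$. If $f(1) > 1$, this breaks $S'_V(1) = f(1)$. In that case I would instead apply the convention for empty suprema, or restrict the claim to $n \geq f(1)$. This boundary bookkeeping is the only real obstacle; the main computation is immediate from the shape of the epigraph.
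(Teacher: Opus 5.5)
Your approach is the paper's: take a domino language whose shape is the epigraph and read off the size functions. Your computations of $S_V$, $S'_V$ and $S_H$ are correct.

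Your concern about $n<f(1)$ is justified, and it points to a slip in the paper's own proof. The paper claims that $f(1)\geq 1$ gives $(1,n)\in|L|$ for all $n$. In fact $(1,n)$ lies in the epigraph only when $n\geq f(1)$.

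Your first resolution amounts to declaring $\sup\emptyset=1$, i.e.\ taking suprema in $\Z_+$. Under that reading the unmodified epigraph language does satisfy the statement literally. But $S_H$ is defined with values in $\N\cup\{\infty\}$, so $\sup\emptyset=0$. Then no language whatsoever satisfies the statement at $n=1$ when $f(1)>1$:
\begin{itemize}
\item $S'_H(1)=1$ forces $S_H(1)=1$, i.e.\ $(1,1)\in|L|$;
\item hence $S_V(1)=S'_V(1)=1\neq f(1)$.
\end{itemize}

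Your finite-modification route is the robust fix. The enlarged shape set is exactly the epigraph of the nondecreasing function $\tilde f$ with $\tilde f(1)=1$ and $\tilde f(m)=f(m)$ for $m\geq 2$. This gives $S_H(n)=S'_H(n)=1$ for all $n$, and $S'_V(n)=f(n)$ for all $n\geq 2$. It also makes $S_H$ itself finite, which the paper's proof asserts and which the bound via $\max\{S(n') \mid n'\leq n\}$ in the subsequent theorem tacitly needs.

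The lemma is only used through Theorem~\ref{thm:mainfunc}, which concerns large $n$, and for an asymptotic conclusion. So equality for $n\geq 2$ is all that is needed.
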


\begin{proof}
The epigraph of $f$ is recognizable. By definition, for the corresponding domino language we have $S_V(n) = f(n)$, and $S_V'(n) = S_V(n)$ since $f$ is nondecreasing. On the other hand, $f(1) \geq 1$ so $(1,n) \in |L|$ for all $n$, and we conclude $S_H(n) = 1$ for all $n \in \Z_+$. Thus also $S_H'(n) = 1$ for all $n$.
\end{proof}

Finally, we recall an interesting theoretical characterization from the literature \cite{BeGoLo09}, which to some readers may clarify the class of recognizable functions and epigraphs, although we did not directly use it here.
Define the \emph{coded shape} of a pattern $p$ with $\dom(p) = \llb m \rrb \times \llb n \rrb$ as a word over alphabet $\{{\circ}, v, h\}$, defined as follows
\[ \cshape(p) = \begin{cases}
{\circ}^n h {\circ}^{m-n-1}, & \mbox{if } m > n, \\
{\circ}^m v {\circ}^{n-m-1}, & \mbox{if } m < n, \\
{\circ}^n, & \mbox{if } m = n.
\end{cases} \]
Write $Q$ for the set of \emph{quasi-unary words}, which is the regular language given by the regular expression ${\circ}^* (\epsilon + (h+v) {\circ}^*)$. This is the set of coded shapes of all possible pictures. The word $x \in Q$, represents the shape of a picture whose longer side has length $|x|$, shorter side has length ${}_{\circ}|x|$ (the length of the maximal prefix over the symbol ${\circ}$), and when $x \notin {\circ}^*$, the letters $h,v$ indicate which side is the longer one.

\begin{definition}
Define $\textsc{NSpaceRev}_{Q}$ as the class of quasi-unary languages that can be recognized by a one-tape nondeterministic Turing machine in $|x|$ space and executing at most ${}_\circ|x|$ head reversals, for any input $x \in Q$.
\end{definition}

\begin{theorem}[\cite{BeGoLo09}]
A set $N \subset \Z_+^2$ is recognizable if and only if the corresponding quasi-unary language is in $\textsc{NSpaceRev}_{Q}$.
\end{theorem}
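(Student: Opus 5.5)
The plan is to prove the two directions separately. Throughout, the tiling is organized so that the longer side of the picture plays the role of tape space and the shorter side plays the role of time measured in head reversals (sweeps). Recognizable sets are closed under finite unions and under transposition. So I would split $N$ into the parts with $m>n$, $m<n$ and $m=n$, and treat only the case $m \geq n$; the case $m<n$ follows by transposing.

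\emph{From tilings to machines.} Let $N = |L|$ for a domino language $L$ over $A$. On input $x = {\circ}^n h {\circ}^{m-n-1}$, the machine has $|x| = m$ tape cells and can read off $n$ from the prefix. It guesses the picture row by row, each row of length $m$:
\begin{itemize}
\item every tape cell holds a pair (previous-row symbol, current-row symbol) over a finite alphabet;
\item during one left-to-right or right-to-left sweep it writes a new row, checks the vertical dominoes against the stored row and the horizontal dominoes against the neighbour it has just written, and shifts the pair;
\item the first and last rows are checked against $\#$.
\end{itemize}
It must also check that there are exactly $n$ rows. For this, a marker is moved one ${\circ}$-position along the prefix per sweep, with a separate track keeping the input readable. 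This uses $n$ sweeps plus $O(1)$, so about ${}_{\circ}|x|$ reversals. The additive constant can be absorbed by a standard speedup: fold two sweeps into one via a larger alphabet, or check the first few rows in the finite control.

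\emph{From machines to tilings.} Given a machine $M$ in $\textsc{NSpaceRev}_{Q}$, I would build a domino language whose pictures of shape $(m,n)$ encode accepting runs of $M$ on $\cshape(\una(m,n))$. The steps, in order:
\begin{itemize}
\item The bottom row is forced to carry the input word. A diagonal signal from the bottom-left corner hits the bottom row in column $n$, where $h$ is written; there are ${\circ}$s elsewhere. In the square case there is no $h$.
\item Each subsequent row encodes the tape contents after one further sweep. Within a sweep the head moves monotonically. The state is therefore carried as a horizontal colour on the edges between consecutive cells. Each cell's content changes from the row below according to the transitions applied while the head passes over it.
\item Cells the sweep does not reach are copied, and the start and end positions of a sweep are marked. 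This lets a reversal happen at an interior cell, with the next row's sweep starting there in the opposite direction.
\item Acceptance is signalled in the top row. Runs with fewer than $n$ reversals are padded with idle copy rows; the shape requires exactly $n$ rows, while the machine is allowed at most $n$ reversals.
\end{itemize}
Nondeterminism of $M$ is absorbed by the symbolwise projection allowed in $\REC$.

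The main obstacle is the local encoding of a single sweep. The head may stay on one cell for an unbounded number of steps between moves, so each row's encoding has to record a whole segment of computation on one cell. I would first normalize $M$ so that every stationary phase on one cell has bounded length, or can be summarized by a finite relation (state in, symbol in) to (state out, symbol out). This is possible because a stationary phase only sees one cell, so its effect is a finite relation computable from $M$. With that relation, each cell is a finite tile and the horizontal state-passing dominoes are well defined. A second, smaller issue is that the space bound is $|x| = m$ exactly, with no end-markers. I would handle this with the $\#$-boundary: the head hitting the padding counts as a reversal, consistent with the reversal bound.
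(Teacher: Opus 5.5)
The paper gives no proof of this statement. It is quoted from \cite{BeGoLo09} as background, and the authors remark that they do not use it, so there is nothing in the paper to compare against. Your outline is the standard argument behind this characterization: one picture row per head sweep, with the longer side as the tape and the shorter side as the reversal budget. It is essentially sound, including your reduction of unbounded stationary phases to a finite relation on (state, symbol) pairs.

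Two details need repair, neither serious.

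First, in the direction from machines to tilings the count does not close as written. A run using ${}_{\circ}|x| = n$ reversals consists of $n+1$ sweeps. After reserving the bottom row for the input, only $n-1$ rows remain, so runs that use most of their reversal budget would be lost. Use the compression you already invoke in the other direction: let each row carry one rightward and one leftward sweep. Each cell stores its content after both sweeps, and the edges between horizontally adjacent cells carry two state tracks. This is local, because tiling constraints propagate information in both horizontal directions. The bottom row can hold the input together with the first two sweeps, giving capacity $2n \geq n+1$.

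Second, a diagonal signal from the bottom-left corner never returns to the bottom row. Send it from the top-left corner instead. It then meets the bottom row in column $n$, with $h$ placed in column $n+1$, precisely when $n < m$.

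In the direction from tilings to machines, no speedup is actually needed: guessing one row per sweep costs only $n-1 \leq n$ reversals. Do make sure your row counter also advances during leftward sweeps. There the head reaches cell $i+1$ before it can see that the marker sits at cell $i$. You can fix this either by guessing and verifying, or by growing marked regions from both ends of the ${\circ}$-prefix.
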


Our main interest is in recognizability of functions $f$ in the sublinear scheme, and the theorem simplifies as follows:

\begin{corollary}
A function $f : \Z_+ \to \Z_+$ with $f(n) = O(n)$ is recognizable if and only if the quasi-unary encodings of pairs $(m, f(m))$ can be recognized in linear time, executing at most $O(f(n))$ head reversals.
\end{corollary}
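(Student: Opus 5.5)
The plan is to derive the statement directly from the theorem of \cite{BeGoLo09} applied to the graph $N = \{(m, f(m)) \;|\; m \in \Z_+\}$. The only work is to translate the exact resource bounds of $\textsc{NSpaceRev}_{Q}$ (space $|x|$, at most ${}_\circ|x|$ reversals) into the asymptotic bounds of the corollary, using $f(n) = O(n)$.

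\emph{Step 1: sizes of the encoding.} Fix $C$ with $f(m) \leq Cm$ for all $m$. For $x = \cshape(\una(m, f(m)))$ we have $|x| = \max(m, f(m))$, which lies between $m$ and $Cm$, so $|x| = \Theta(m)$. Similarly ${}_\circ|x| = \min(m, f(m))$. If $f(m) \leq m$ this equals $f(m)$. If $f(m) > m$ it equals $m$, and then $m < f(m) \leq Cm$. In both cases ${}_\circ|x| = \Theta(f(m))$. Hence ``$O(f)$ reversals'' and ``at most ${}_\circ|x|$ reversals'' agree up to a constant factor, and linear bounds in $m$ are linear bounds in $|x|$. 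Finitely many small $m$ can be ignored, since recognizable sets are closed under finite modifications and finite sets of inputs can be hardwired into a machine.

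\emph{Step 2: the direction from recognizability.} If $f$ is recognizable, the theorem gives a nondeterministic machine using space $|x|$ and at most ${}_\circ|x| = O(f(m))$ reversals. The remaining issue is running time. Between two consecutive reversals the head sweeps monotonically, but it may also stay in place. I would normalize the machine so that every step moves the head, absorbing stationary steps into the finite control by the usual simulation. Each sweep then costs at most $|x|$ steps, so the time is at most $|x|\cdot({}_\circ|x|+1)$. I expect the intended reading of ``linear'' is linear space (equivalently, linear time per sweep), and I would state the proof in that form, which is exactly what the theorem provides.

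\emph{Step 3: the converse.} Suppose a machine recognizes the encodings in linear space $c|x|$ with at most $K f(m)$ reversals. I would compress it to fit the theorem's exact bounds. Space is compressed by the standard alphabet-enlargement trick, packing $c$ cells into one. Reversals are reduced by a constant-factor reversal speedup. This is the main obstacle. The approach I would try first is to group the tape into blocks of $K'$ cells stored as single symbols, and let one sweep of the new machine simulate several consecutive sweeps of the old one, which is possible within a block but not across blocks. Since a block can be read whole in one step, the simulation of a reversal confined to a block needs no reversal of the real head. If this block argument fails, I would instead rely on the known linear speedup for reversal complexity of nondeterministic Turing machines, or use nondeterminism to guess the crossing sequences at block boundaries. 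Once the bounds are space $|x|$ and at most ${}_\circ|x|$ reversals, the theorem yields recognizability of the graph, i.e.\ $f$ is recognizable.
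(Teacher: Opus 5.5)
Your overall route is the paper's. The paper gives no separate proof: it reads the corollary off the theorem of \cite{BeGoLo09} applied to the graph of $f$, using exactly your Step~1 computation $|x| = \max(m,f(m)) = \Theta(m)$ and ${}_\circ|x| = \min(m,f(m)) = \Theta(f(m))$. This is where $f(n) = O(n)$ enters. Step~2 is fine, and you are right to read ``linear time'' as ``linear space'', because taken literally the forward direction fails. For instance, $f(m) = \lceil m/2\rceil$ is recognizable (by diagonal signals). Yet in any accepting run of linear length on $\circ^{\lceil m/2\rceil} h \circ^{\lfloor m/2\rfloor - 1}$, two boundaries inside the $\circ$-prefix carry the same short crossing sequence. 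Pumping the segment between them yields accepted words outside the language.

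The step that does not go through as written is the reversal speedup in Step~3. The theorem as quoted demands at most ${}_\circ|x|$ reversals, while your hypothesis gives $Kf(m)$; the paper also passes over this silently. Your block idea does not provide the speedup. Grouping cells only saves reversals whose sweeps stay inside a single block, so a machine that sweeps the whole tape back and forth $Kf(m)$ times gains nothing.

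What works is your last fallback, and it should be the main argument: one full pass of the new machine simulates $k$ consecutive sweeps of the old one.
\begin{itemize}
\item While moving across the tape, the new machine guesses, for each cell boundary, the sequence of states in which the old machine crosses it during these $k$ sweeps (at most $k$ entries), keeping the previous guess in its finite control.
\item At each cell it checks, as in the standard crossing-sequence arguments, that the two sequences bounding the cell are consistent with the cell's contents and the transition relation.
\item It writes the updated contents and marks where the batch ends and in which state.
\end{itemize}
Space compression fits the same framework: fold the tape of length $c|x|$ into $c$ tracks, and crossing sequences then have length at most $ck$.

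For the count, take $f(m) \le Cm$ with $C \geq 1$, so that ${}_\circ|x| \ge \max(1, f(m)/C)$. Choosing $k \ge KC+1$ gives at most $\lceil (Kf(m)+1)/k\rceil \le {}_\circ|x|$ passes for \emph{every} $m$, including those where $f(m)$ is small. If the class requires the bound on all computations, keep a pass counter by marking cells of the $\circ$-prefix and reject when it runs out. With this replacing the block argument, your proof goes through.
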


\section{Block gluing functions}
\label{sec:NoGap}

In this section define a class of SFTs that will be used to realize a large class of block gluing functions.
First, we define an auxiliary SFT called the \emph{boxes SFT}.

\begin{definition}
The alphabet of the boxes SFT is $\{0\} \cup \{0,1\}^{\{N,E,W,S\}}$. We think of $\vec v \in \Z^2$ as a geometric square cell, and as the alphabet we have empty symbols $0$, as well as nonzero symbols, which have \emph{inner borders} on any of the four sides, which are identified by the corresponding bit having value $1$. (A nonzero symbol with no borders is distinct from a zero symbol.) For a symbol $a \neq 0$, we write $a.N, a.E, a.W, a.S$ for the four bits corresponding to the borders.

By a \emph{rototranslation} of a configuration we mean a configuration in its dihedral orbit, i.e.\ a configuration obtained from it through a sequence of $90$-degree rotations and axial flips, where the bits $\{0,1\}^{\{N,E,W,S\}}$ are also rotated in the geometrically obvious way.

We impose the followings rules for each configuration and all its rototranslations:
\begin{itemize}
\item[(A)] If opposite corners of a $2 \times 2$ block are nonzero, then the entire block is nonzero.
\item[(B)] In a horizontal domino $ab$ with only nonzero symbols, a side border cannot end abruptly: $a.N = 1$, then $b.N = 1$, $a.E = 1$ or $b.W = 1$.
\item[(C)] In a horizontal domino $ab$ where $a \neq 0$ and $b = 0$, we must have $a.E = 1$.
\end{itemize}
Furthermore, in each $2 \times 2$ block $\begin{bmatrix} a & b \\ c & d \end{bmatrix}$ where none of the symbols are $0$, we require the following:
\begin{enumerate}
\item[(D)] If there is an inner border, then there must be a through border: if $a.E = 1$, then $a.S = b.S = 1$, $c.N = d.N = 1$, $a.E = c.E = 1$, or $b.W = d.W = 1$.
\item[(E)] A through border cannot receive orthogonal borders from the inside: if $a.S = b.S = 1$, then $a.E = b.W = 0$.
\end{enumerate}
\end{definition}

An example configuration in the boxes SFT is shown in Figure~\ref{fig:BoxesExample}.

\begin{figure}
\begin{center}
\begin{tikzpicture}
\fill[black!15!white] (0,0) rectangle (6,3);
\fill[black!15!white] (7,0) rectangle (9, 4);
\draw[gray] (-1,-1) grid (10, 5);

\drawsides{0}{0}{0}{0}{1}{1};
\drawsides{0}{1}{0}{0}{1}{1};
\drawsides{0}{2}{0}{1}{1}{0};

\drawsides{1}{0}{0}{0}{0}{1};
\drawsides{1}{1}{0}{0}{0}{1};
\drawsides{1}{2}{0}{1}{0}{0};

\drawsides{2}{0}{0}{0}{1}{1};
\drawsides{2}{1}{0}{0}{0}{1};
\drawsides{2}{2}{0}{1}{0}{0};

\drawsides{3}{0}{0}{0}{0}{1};
\drawsides{3}{1}{1}{0}{0}{1};
\drawsides{3}{2}{1}{1}{0}{0};

\drawsides{4}{0}{0}{0}{1}{1};
\drawsides{4}{1}{0}{1}{1}{0};
\drawsides{4}{2}{0}{1}{0}{0};

\drawsides{5}{0}{1}{0}{0}{1};
\drawsides{5}{1}{1}{1}{0}{0};
\drawsides{5}{2}{1}{1}{0}{0};

\drawsides{7}{0}{1}{1}{1}{1};
\drawsides{8}{0}{1}{0}{1}{1};

\drawsides{7}{1}{0}{0}{1}{0};
\drawsides{8}{1}{1}{0}{1}{0};

\drawsides{7}{2}{0}{0}{1}{0};
\drawsides{8}{2}{1}{0}{1}{0};

\drawsides{7}{3}{0}{1}{1}{0};
\drawsides{8}{3}{1}{1}{1}{0};
\end{tikzpicture}
\end{center}
\caption{An example configuration in the boxes SFT. Two boxes are shaded and inner walls are drawn with lens shapes. Rule (A) corresponds to the fact the shaded areas are boxes and rule (C) corresponds to the fact the boxes have inner borders on their perimeter.}
\label{fig:BoxesExample}
\end{figure}

\begin{lemma}
\label{lem:Boxes}
In a globally valid finite pattern $p$, every position containing a nonzero symbol belongs to a unique rectangular region containing only nonzero symbols, such that immediate $\ell^\infty$-neighbors of the region inside the domain of $p$ are zero.
\end{lemma}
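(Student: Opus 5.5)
The plan is to show that, in any locally valid pattern (in particular a globally valid one), the connected components of nonzero cells are rectangles. Connectivity here is with respect to the $\ell^\infty$ (king-move) adjacency, restricted to the domain of $p$. Everything rests on rule (A) together with its rototranslations; rules (B)--(E) are not needed for this lemma and only matter later for the internal structure of boxes. For a nonzero position $\vec v$, let $R$ be its $\ell^\infty$-connected component of nonzero cells within $\dom(p)$. Every $\ell^\infty$-neighbor of $R$ inside $\dom(p)$ is then zero by definition. So it suffices to show that $R$ is a rectangle. Uniqueness is then immediate: any rectangle of nonzero cells containing $\vec v$ whose $\ell^\infty$-neighbors are zero must equal the component.

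The key local fact I would extract from (A) is that if two cells at $\ell^\infty$-distance $1$ are both nonzero, then the whole $2\times 2$ block containing them is nonzero. For diagonal neighbors this is exactly (A). For orthogonal neighbors, say horizontally adjacent $a,b$, I would first use the nonzero pair to bootstrap along the pattern: rule (A) in the stated form only handles opposite corners, so I will check whether a rototranslated instance, or rule (C), forces more. Rule (C) says that a nonzero cell with a zero right neighbor has an east border. Combined with (D) and (B), a border must close off into a box boundary. I expect the clean route to be the following. Use (A) to show that $R$ is \emph{diagonally convex}: whenever two cells of $R$ are opposite corners of a $2\times2$ block, the whole block lies in $R$. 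Then use (C) and its rotations, which say that the perimeter cells of a nonzero region carry borders facing the zeros, together with (B) and (E), to rule out concave corners of $R$. An L-shaped configuration of three nonzero cells with a zero fourth cell would need borders on that zero's sides, and those borders, by (B), must continue or turn. Following them yields either a through-border conflict with (E) or an opposite-corner pair violating (A).

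The final step is combinatorial. A finite $\ell^\infty$-connected set in $\Z^2$ that has no $2\times2$ window containing exactly three of its cells, and none containing exactly two diagonal ones, is a rectangle. To see this, take its bounding box. If $R$ is not the full box, a boundary-of-$R$ cell inside the box yields a $2\times 2$ window with a concave or diagonal pattern, since connectivity prevents isolated pieces. One also has to handle the boundary of $\dom(p)$, where windows are truncated. There the statement only claims zeros at neighbors \emph{inside} $\dom(p)$, so the region may simply be the truncation of a rectangle by the domain, and that truncation is still a rectangle. I expect the main obstacle to be the no-concave-corner step, i.e.\ determining exactly which combination of (A)--(E) rules out three nonzero cells around a zero in a $2\times2$ block. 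I would try to derive it first from (A) alone via rototranslations and a second overlapping $2\times 2$ block, and fall back on the border rules (C), (B) if that fails.
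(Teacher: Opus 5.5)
Your reduction is sound. Taking $R$ to be the $\ell^\infty$-connected component of $\vec v$ among the nonzero cells of $\dom(p)$ makes the neighbour condition and uniqueness automatic, and (A) does show that $R$ is diagonally closed.

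The gap is the step you flag as the main obstacle: excluding a $2\times 2$ window with exactly three cells of $R$. You leave it open. You suggest either an unspecified argument with overlapping blocks or a border-chasing argument with (B), (C), (E), and neither is carried out. No such argument is needed. Any three cells of a $2\times 2$ block contain a pair of opposite corners, so if three are nonzero, (A) forces the fourth to be nonzero. Diagonal closure alone therefore excludes both kinds of window in your combinatorial lemma, and your opening remark that only (A) matters was right.

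You should also discard the ``key local fact'' for orthogonal neighbours, because it is false. Take two horizontally adjacent nonzero cells with borders on all their outer sides, surrounded by zeros. This is a valid pattern, yet neither $2\times 2$ block containing the pair is entirely nonzero.

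Once this is repaired, your route still rests on the lemma that an $\ell^\infty$-connected, diagonally closed set is a rectangle. You only sketch this via the bounding box. Here is a clean proof. Diagonal closure makes $R$ $4$-connected. If $(x,y),(x,y+1)\in R$, applying closure to successive windows to the right and to the left shows that the horizontal runs of $R$ through these two cells coincide. Hence $R$ is a vertical stack of identical runs, i.e.\ a rectangle.

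The paper sidesteps this lemma. It takes a maximal rectangle of nonzero cells containing $\vec v$. A nonzero $\ell^\infty$-neighbour would then, by repeated use of (A) on windows pairing it (and each newly forced cell) diagonally with a cell of the rectangle, make the whole adjacent row or column nonzero. That contradicts maximality, and uniqueness follows as in your argument. Your component formulation gives uniqueness for free, at the cost of the extra combinatorial lemma.

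Both arguments tacitly need $\dom(p)$ to be a rectangle, as the paper assumes, so that a window with two opposite corners in the domain lies inside it. For an L-shaped domain cut from the interior of a large box, the statement fails.
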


\begin{proof}
Suppose $p$ is a valid rectangular pattern and $p_{\vec v} \neq 0$. Then $\vec v$ at least belongs to the $1 \times 1$ rectangular region $R = \{\vec v\}$. Let $\llb a, b\rrb \times \llb c, d \rrb$ be a maximal rectangular region containing $\vec v$. If there is an immediate $\ell^\infty$-neighbor $\vec u$ of $R$ inside the domain of $p$ containing a nonzero symbol, then repeated application of rule A shows that the entire side containing $\vec u$ (or two sides, if it is a diagonal neighbor) contain nonzero symbols, so $R$ is not maximal. Being surrounded by $0$s, the region is obviously unique.
\end{proof}

A rectangular block of nonzero cells is called \emph{north terminating}, if some horizontal domino $ab$ on its top row satisfies $a.N = b.N = 1$ and $\max(a.E, b.W) = 1$ (i.e.\ there is a north border that receives an orthogonal inner border).
East, west and south terminating blocks are defined analogously.

\begin{figure}
\begin{center}
\begin{tikzpicture}
\fill[red!20!white] (0,0) rectangle (6,3);
\fill[blue!30!white] (7,0) rectangle (9, 4);
\draw[gray] (-1,-1) grid (10, 5);

\drawsides{0}{0}{0}{0}{1}{1};
\node () at (0.5,0.5) {$a$};
\drawsides{0}{1}{0}{0}{1}{1};
\node () at (0.5,1.5) {$a$};
\drawsides{0}{2}{0}{1}{1}{0};
\node () at (0.5,2.5) {$a$};

\drawsides{1}{0}{0}{0}{0}{1};
\node () at (1.5,0.5) {$a$};
\drawsides{1}{1}{0}{0}{0}{1};
\node () at (1.5,1.5) {$a$};
\drawsides{1}{2}{0}{1}{0}{0};
\node () at (1.5,2.5) {$b$};

\drawsides{2}{0}{0}{0}{1}{1};
\node () at (2.5,0.5) {$N$};
\drawsides{2}{1}{0}{0}{0}{1};
\node () at (2.5,1.5) {$a$};
\drawsides{2}{2}{0}{1}{0}{0};
\node () at (2.5,2.5) {$b$};

\drawsides{3}{0}{0}{0}{0}{1};
\node () at (3.5,0.5) {$N$};
\drawsides{3}{1}{1}{0}{0}{1};
\node () at (3.5,1.5) {$b$};
\drawsides{3}{2}{1}{1}{0}{0};
\node () at (3.5,2.5) {$b$};

\drawsides{4}{0}{0}{0}{1}{1};
\node () at (4.5,0.5) {$a$};
\drawsides{4}{1}{0}{1}{1}{0};
\node () at (4.5,1.5) {$a$};
\drawsides{4}{2}{0}{1}{0}{0};
\node () at (4.5,2.5) {$W$};

\drawsides{5}{0}{1}{0}{0}{1};
\node () at (5.5,0.5) {$b$};
\drawsides{5}{1}{1}{1}{0}{0};
\node () at (5.5,1.5) {$b$};
\drawsides{5}{2}{1}{1}{0}{0};
\node () at (5.5,2.5) {$W$};

\drawsides{7}{0}{1}{1}{1}{1};
\node () at (7.5,0.5) {$b$};
\drawsides{8}{0}{1}{0}{1}{1};
\node () at (8.5,0.5) {$b$};

\drawsides{7}{1}{0}{0}{1}{0};
\node () at (7.5,1.5) {$S$};
\drawsides{8}{1}{1}{0}{1}{0};
\node () at (8.5,1.5) {$b$};

\drawsides{7}{2}{0}{0}{1}{0};
\node () at (7.5,2.5) {$S$};
\drawsides{8}{2}{1}{0}{1}{0};
\node () at (8.5,2.5) {$b$};

\drawsides{7}{3}{0}{1}{1}{0};
\node () at (7.5,3.5) {$S$};
\drawsides{8}{3}{1}{1}{1}{0};
\node () at (8.5,3.5) {$b$};
\end{tikzpicture}
\end{center}
\caption{A configuration of the SFT from the proof of Theorem~\ref{thm:mainfunc}, overlaid on the example from Figure~\ref{fig:BoxesExample}. Here, $L$ is the domino language with allowed horizontal and vertical patterns $aa, ab, bb$, and any pattern with $\#$ except horizontal $a\#$.}
\label{fig:BoxesExample2}
\end{figure}

\begin{lemma}
\label{lem:terminating}
If a block is north terminating, then each cell on its top row has a north border, and the only locally valid upward continuation is with a row of zeroes.
\end{lemma}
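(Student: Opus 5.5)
Let the block occupy columns $\llb a,b \rrb$ and have top row $r$, and suppose the domino $xy$ at columns $i,i+1$ on row $r$ satisfies $x.N = y.N = 1$ and $\max(x.E, y.W) = 1$. The plan has three steps: show the north border at $i,i+1$ spreads along the whole top row; show that a nonzero cell above the row would violate rule (E); and conclude that the row above is all zeroes. Throughout, "locally valid" means no forbidden pattern from (A)--(E) occurs.

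First, propagating the north border. Take any domino $cd$ on row $r$ inside the block with $c.N = 1$. If $c.E = 1$ or $d.W = 1$, then $c$ and $d$ are separated by an inner border. I want to show that rule (B) then still forces $d.N = 1$. The natural route is to look at the row $r+1$ above.
\begin{itemize}
\item If the cells above $c,d$ are nonzero, then (D) applied to the $2 \times 2$ block spanning rows $r,r+1$ gives a through border. Together with (E) this should prevent the vertical inner border from simply meeting the north border of $c$ alone.
\item Otherwise rule (C), applied in its rotated form, gives north borders directly wherever the cell above is zero.
\end{itemize}
Running this left and right from the terminating domino should give $e.N = 1$ for every cell $e$ on the top row.

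Second, the upward continuation. Suppose some cell above the top row, at column $j \in \llb a,b\rrb$, is nonzero. By the maximality in Lemma~\ref{lem:Boxes}, that cell would belong to the same block, contradicting that row $r$ is its top row. In a merely locally valid continuation I would argue directly instead:
\begin{itemize}
\item Rule (A) together with rule (B) for horizontal dominoes on row $r+1$ propagates nonzero symbols along row $r+1$. The $2 \times 2$ blocks formed with row $r$ are then entirely nonzero.
\item In particular, the $2 \times 2$ block above the terminating domino $xy$ becomes all nonzero. Its bottom row $x,y$ has $x.N = y.N = 1$, a through border, while $x.E = 1$ or $y.W = 1$. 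Rule (E), applied to the flipped orientation (bottom row receiving from below), forbids exactly this.
\end{itemize}
Hence the cells directly above $x$ and $y$ are zero. I then need the zeroes to spread across the whole row $r+1$ above the block. Suppose a cell above column $j$ is nonzero while its neighbour toward $i$ is zero. Rule (A) applied to the diagonal pair (cell at $(j,r)$ and the nonzero cell at $(j\pm1,r+1)$) forces the zero cell to be nonzero, a contradiction. By induction the whole row $r+1$ over $\llb a,b\rrb$ is zero.

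The main obstacle is the propagation in the first step, namely showing that the north border really runs along the whole top row across internal vertical borders. This requires a careful case check of (B), (D) and (E) under rototranslations. I would first try a cleaner ordering: prove the second step first, that row $r+1$ is zero over the block, using (E) only at the terminating domino and then (A) for spreading. After that, rule (C) rotated northward gives $e.N = 1$ for every top-row cell immediately, which avoids the first step's case analysis altogether.
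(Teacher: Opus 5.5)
Your final ``cleaner ordering'' is correct and is exactly the paper's proof: the flipped rule (E) rules out both cells above the terminating domino being nonzero, rule (A) spreads this to make the whole row above the block zero, and the rotated rule (C) then gives every top-row cell a north border. You are right to drop the first step, because rule (B) imposes nothing on $d.N$ when $c.E=1$ or $d.W=1$; also, rule (B) plays no role in the spreading, which uses only (A).
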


\begin{proof}
By rule (E), the cells above the domino $ab$ cannot both be nonzero.
Repeated application of rule (A) gives that all cells immediately above the top row of the pattern are all zero.
Rule (C) implies that all cells on the top row have north borders.
\end{proof}

\begin{theorem}
\label{thm:mainfunc}
Let $L \subset A^{**}$ be a recognizable picture language. Then there exists an SFT such that
\[ B_H(n) \geq 2 S_H'(n) + 1, \]
\[ B_V(n) \geq 2 S_V'(n) + 1, \]
and
\[ B(n) = \max(3, 2 S'(n) + 1) \]
for all large enough $n$.
\end{theorem}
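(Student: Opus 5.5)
We would build the SFT $X$ as a product-like overlay of the boxes SFT with the domino language $L$, as Figure~\ref{fig:BoxesExample2} suggests. We may assume $L$ is a domino language over $A$, since shapes of recognizable languages coincide with shapes of domino languages. The alphabet is $\{0\} \cup (\{0,1\}^{\{N,E,W,S\}} \times A \times \{\text{plain}, N, E, W, S\})$. Inside every nonzero block we impose the domino rules of $L$ on the $A$-track, treating a zero neighbour as $\#$. The extra track records terminating information: a cell labelled with direction $N$ witnesses a north-terminating domino. Along the corresponding side, the domino rules of $L$ are enforced with $\#$ even though the block has not yet been seen to end, and these labels are propagated so that a block is terminating in a direction exactly when its $A$-content "closes" on that side.

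The intended effect, via Lemma~\ref{lem:Boxes} and Lemma~\ref{lem:terminating}, is that a finite block which is terminating on all four sides is a padded picture of $L$. Hence its shape lies in $|L|$, and conversely every picture of $L$ occurs as such a closed block. Non-terminating sides are the flexible ones: a block may be cut off by the window of observation, and the inner-border machinery (rules (B), (D), (E)) lets an unterminated side be extended freely. This gives room to glue.

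\textbf{Lower bounds.} Fix $n$ and $n' \leq n$ with $S_H(n') < \infty$. Take $p$ to be an $n \times n$ pattern in which a box of height $n'$, terminating north and south but not east, reaches the east boundary. Take $q$ to be the mirror image, a box reaching the west boundary. Any configuration containing both must close each box horizontally into a full picture of $L$ of height $n'$. That requires width at least $S_H(n')$ for each box, so at least $S_H(n')$ minus what is already inside $p$. By arranging the box to occupy only a single column of $p$, we force the gap to contain about $2S_H(n')$ cells, plus one separating zero column, since two distinct boxes cannot touch by rule (A). This gives $B_H(n) \geq 2S'_H(n)+1$, and symmetrically $B_V(n) \geq 2S'_V(n)+1$.

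\textbf{Upper bound.} This is the main obstacle. Given $n\times n$ globally valid $p,q$ at horizontal distance $k = \max(3, 2S'(n)+1)$, we need a case analysis. Every block in $p$ that touches the east side of $p$ is either already closed on the east, or is a partial picture of height at most $n$ (or unbounded, if it also exits vertically). For each such block we extend it eastward within $S'(n)$ columns to a closed picture. We do the same for blocks of $q$ westward, and separate the two sides by a zero column. The hard points are these:
\begin{itemize}
\item ensuring the extension of a partial block of height $n'\le n$ exists within width $S'(n)$ while also being consistent with the $A$-content already inside $p$; we would handle this by using inner through-borders to cut the block just outside $p$, so the outside part is a fresh picture;
\item handling blocks that exit $p$ vertically as well, which we extend to infinite non-terminating strips filled with an inner border structure;
\item filling the rest of the plane with zeros.
\end{itemize}
The constant $3$ accounts for small $n$, and for the case where no closing is needed but a zero column plus border cells are still required.
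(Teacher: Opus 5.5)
Your proposal has genuine gaps, the most serious in the lower bound.

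\textbf{Lower bound.} Nothing in your SFT forces the boxes coming out of $p$ and $q$ to lie in different blocks. For horizontal gluing they occupy the same rows, so the box leaving $p$ eastward and the one leaving $q$ westward can run into each other as a single block. They can even form a single picture of $L$ of height $n'$ if the contents fit, and then the forced gap can be as small as about $S_H(n')$. Your remark that distinct boxes cannot touch assumes exactly the distinctness that has to be forced. The paper adds a red/blue layer that must agree on adjacent nonzero cells, so every block is monochromatic, and gives $p$ and $q$ opposite colors; this is what produces the separating zero line.

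Even with distinct blocks, your count gives only $2S_H(n')-1$. A one-column partial box whose content is the first column of a minimal-width picture is completed by $S_H(n')-1$ further columns, so the exact equality $B(n)=\max(3,2S'(n)+1)$ is lost. Nor is ``must close each box'' literally true: a through border carried by the next sub-box lets the old content be abandoned. The missing idea is to make the content inside $p$ uncompletable, so that a \emph{fresh} complete picture must lie entirely in the gap on each side. The paper argues vertically, as follows.
\begin{itemize}
\item It adds to $L$ filler symbols $E,N,W,S$ that only neighbor themselves and may not face $\#$ in their own direction. They occur in no picture, so $|L|$ is unchanged.
\item It ends $p$ (of width $m$) with two stacked ``up cups'', the upper one filled with $N$ and hence never closable. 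The stack makes the block west and east terminating, so by Lemma~\ref{lem:terminating} it cannot widen.
\item The only continuations are then to prolong the cup or to start a new full-width sub-box with its own bottom border. Since the block must stop before meeting the oppositely colored block of $q$, a complete width-$m$ picture, i.e.\ at least $S_V(m)$ rows, lies outside $p$, and symmetrically below $q$.
\end{itemize}

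\textbf{Upper bound and construction.} Your idea of cutting just outside $p$ with a through border and inserting a fresh minimal picture matches the paper's. However, the ``hard points'' you list are where the work lies, and you leave them open.
\begin{itemize}
\item A block meeting a corner of $p$ that is not terminating on the adjacent side may have to continue past that side of $p$. Rule (A) then forces the cells above it to continue as well, so a bordered cap of width $n'$ need not fit.
\item An open-ended strip needs contents that extend indefinitely with $\#$ on its bordered sides, which a general $L$ need not admit.
\end{itemize}
The paper handles both with the same fillers: a height-one box of $W$ (resp.\ $E$) symbols, open towards the non-terminating side. It then proves global validity by building a full zero ``cut row'' and repeating the procedure to the left, to the right and below $p$.

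Finally, your extra track (``propagated so that a block is terminating exactly when its content closes'') is not a specification of local rules. Your intended effect is also inconsistent with the definitions. A terminating side by definition receives an orthogonal inner border, so a block terminating on all four sides is subdivided into sub-boxes and is not a single padded picture. The paper's rule is simply that a cell carrying an inner border must match $\#$ on that side, while border-free adjacent nonzero cells obey the dominoes of $L$.
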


\begin{proof}
We assume $L$ is a domino language over an alphabet $A'$ given by horizontal and vertical matching rules, and we use special symbol $\# \notin A'$ outside the picture.

We preprocess $L$ by adding four symbols, which can only occur next to themselves, $E, N, W, S \notin A'$, and furthermore, $E$ (resp.\ $N$, $W$, $S$) cannot have a $\#$-symbol on its right (resp.\ above, left, below). Of course, the symbols do not change $|L|$, as these symbols do not appear in any valid pictures. Write $A = A' \cup \{E, N, W, S\}$. 

We consider the boxes SFT, and we overlay $A \times \{\textrm{red},\text{blue}\}$ on top of each nonzero symbol, so our SFT has zero symbols, and nonzero symbols which carry a symbol from the alphabet $A$, a color, and some set of borders (coming from the boxes SFT).
We require that two adjacent nonzero symbols have the same color. The effect of this is that each of the rectangular areas formed by nonzero symbols (see Lemma~\ref{lem:Boxes}) has a uniform color.

If there are no borders between two adjacent nonzero symbols on either side, then their $A$-symbols must match under the rules of $L$.
If a symbol has an inner border on some side, then its $A$-symbol must match $\#$ in that direction.
These rules (including the rules of the boxes SFT) define an SFT $X$.
An example configuration is shown in Figure~\ref{fig:BoxesExample2}.

Let $n \geq 4$. We first prove the lower bound $B_V(n) \geq 2S_V(n) + 1$.

Define an \emph{up cup} as a locally valid rectangular pattern of nonzero cells that has left borders on each cell of its leftmost row, right borders on its rightmost row, bottom borders on its bottom row, and no other borders.
Let $p$ be a globally valid pattern with domain $\llb m \rrb \times \llb n \rrb$ such that its top $i$ rows from two up cups, one on top of the other.
Consider the globally valid continuations of $p$ to the larger domain $\llb m \rrb \times \llb n+1 \rrb$.

First, observe that certainly such a pattern has a globally valid upward continuation of width $m$. If the upper up cup contains a valid bottom of a picture, then we can continue above with the next row of this picture. If this is the top row of the picture, then we have top borders on the new row. Alternatively, we can start a new picture by putting the bottom row of a valid picture of width $m$ above and surrounding it with inner borders (for example, we can copy the bottom row of the topmost up cup). We show that these are in fact the only possibilities.

First, since the nonzero symbols come from the boxes SFT, rule (A) implies that the only possibilities for the subset of the cells in $\llb m \rrb \times \{n\}$ that are nonzero are $\emptyset$ and $\llb m \rrb \times \{n\}$. The first case is not possible: by rule (C) it would require that $p$ has north borders on its top row.
Thus, we have $A$-symbols on each cell of the row $\llb m \rrb \times \{n\}$ above $p$.
Note that $p$ is west and east terminating, so by Lemma~\ref{lem:terminating} the only valid continuations to $\{-1, m\} \times \llb n+1 \rrb$ are with zeroes.
By rule (C) the cell $(0, n)$ must then have a left border, and $(m-1, n)$ must have a right border.

Suppose we have another border on row $n$, which is not on the top. It must be a bottom border, a right border or a left border at some $(i, n)$. In each of these cases, consider the at most two $2 \times 2$ boxes $q$ of $A$-symbols that contain it (so one or both of $\{i-1, i\} \times \{n-1, n\}$ and $\{i, i+1\} \times \{n-1, n\}$, depending on whether they fit in the nonzero area). At least one of these has an inner border, so by rule (D) of the boxes SFT, they must have a through border as well. Since cells in $\llb m \rrb \times \{n-1\}$ provide no inner borders, the only possibility is that we have bottom borders on row $n$ in $q$. Sweeping over the $2 \times 2$ patterns, we conclude that we have a lower border in all of the cells $\llb m \rrb \times \{n\}$ if there is even one inner border on this row. Furthermore, we cannot have any vertical inner borders, since by rule (E), a through border cannot receive orthogonal borders.

Now let us analyze the top borders. We may of course have no top borders on the entire row. If we have even one top border, then since we have no vertical inner borders on row $n$, and inner borders cannot end abruptly by rule (B), we must have a top border in all cells in $\llb m \rrb \times \{n\}$.

We conclude that the only possibilities for continuing $p$ upward by one row are that
\begin{itemize}
\item a bordered box of height $1$ (containing a valid picture) is added on the new row,
\item the topmost up cup continues into a bordered box,
\item a new up cup is formed on the top row, or
\item the up cup is continued.
\end{itemize}
(We may consider the first case a degenerate case of the second, where the new up cup is completed immediately.)
These are illustrated in Figure~\ref{fig:BoxesExtension}.

\begin{figure}
\begin{center}
\begin{tikzpicture}
\begin{scope}[scale=0.7]
\fill[red!20!white] (0,0) rectangle (4,3);
\draw[gray] (0,0) grid (4, 3);

\drawsides{0}{0}{0}{0}{1}{1};
\node () at (0.5,0.5) {$a$};
\drawsides{1}{0}{0}{0}{0}{1};
\node () at (1.5,0.5) {$a$};
\drawsides{2}{0}{0}{0}{0}{1};
\node () at (2.5,0.5) {$a$};
\drawsides{3}{0}{1}{0}{0}{1};
\node () at (3.5,0.5) {$b$};

\drawsides{0}{1}{0}{0}{1}{0};
\node () at (0.5,1.5) {$b$};
\drawsides{1}{1}{0}{0}{0}{0};
\node () at (1.5,1.5) {$b$};
\drawsides{2}{1}{0}{0}{0}{0};
\node () at (2.5,1.5) {$b$};
\drawsides{3}{1}{1}{0}{0}{0};
\node () at (3.5,1.5) {$b$};

\drawsides{0}{2}{0}{0}{1}{1};
\node () at (0.5,2.5) {$N$};
\drawsides{1}{2}{0}{0}{0}{1};
\node () at (1.5,2.5) {$N$};
\drawsides{2}{2}{0}{0}{0}{1};
\node () at (2.5,2.5) {$N$};
\drawsides{3}{2}{1}{0}{0}{1};
\node () at (3.5,2.5) {$N$};
\end{scope}

\draw[-stealth] (3.5,1.2) -- (4.6,1.2);

\begin{scope}[shift={(6,0)}, scale=0.7]
\fill[red!20!white] (0,0) rectangle (4,7);
\draw[gray] (-1,0) grid (5, 7);

\node () at (-1.5, 3.5) {1.};
\node () at (-1.5, 4.5) {2.};
\node () at (-1.5, 5.5) {3.};
\node () at (-1.5, 6.5) {4.};

\drawsides{0}{0}{0}{0}{1}{1};
\node () at (0.5,0.5) {$a$};
\drawsides{1}{0}{0}{0}{0}{1};
\node () at (1.5,0.5) {$a$};
\drawsides{2}{0}{0}{0}{0}{1};
\node () at (2.5,0.5) {$a$};
\drawsides{3}{0}{1}{0}{0}{1};
\node () at (3.5,0.5) {$b$};

\drawsides{0}{1}{0}{0}{1}{0};
\node () at (0.5,1.5) {$b$};
\drawsides{1}{1}{0}{0}{0}{0};
\node () at (1.5,1.5) {$b$};
\drawsides{2}{1}{0}{0}{0}{0};
\node () at (2.5,1.5) {$b$};
\drawsides{3}{1}{1}{0}{0}{0};
\node () at (3.5,1.5) {$b$};

\drawsides{0}{2}{0}{0}{1}{1};
\node () at (0.5,2.5) {$N$};
\drawsides{1}{2}{0}{0}{0}{1};
\node () at (1.5,2.5) {$N$};
\drawsides{2}{2}{0}{0}{0}{1};
\node () at (2.5,2.5) {$N$};
\drawsides{3}{2}{1}{0}{0}{1};
\node () at (3.5,2.5) {$N$};

\drawsides{0}{3}{0}{0}{1}{0};
\node () at (0.5,3.5) {$N$};
\drawsides{1}{3}{0}{0}{0}{0};
\node () at (1.5,3.5) {$N$};
\drawsides{2}{3}{0}{0}{0}{0};
\node () at (2.5,3.5) {$N$};
\drawsides{3}{3}{1}{0}{0}{0};
\node () at (3.5,3.5) {$N$};

\drawsides{0}{4}{0}{0}{1}{1};
\node () at (0.5,4.5) {$a$};
\drawsides{1}{4}{0}{0}{0}{1};
\node () at (1.5,4.5) {$a$};
\drawsides{2}{4}{0}{0}{0}{1};
\node () at (2.5,4.5) {$a$};
\drawsides{3}{4}{1}{0}{0}{1};
\node () at (3.5,4.5) {$b$};

\drawsides{0}{5}{0}{0}{1}{0};
\node () at (0.5,5.5) {$a$};
\drawsides{1}{5}{0}{0}{0}{0};
\node () at (1.5,5.5) {$a$};
\drawsides{2}{5}{0}{0}{0}{0};
\node () at (2.5,5.5) {$b$};
\drawsides{3}{5}{1}{0}{0}{0};
\node () at (3.5,5.5) {$b$};

\drawsides{0}{6}{0}{1}{1}{0};
\node () at (0.5,6.5) {$a$};
\drawsides{1}{6}{0}{1}{0}{0};
\node () at (1.5,6.5) {$a$};
\drawsides{2}{6}{0}{1}{0}{0};
\node () at (2.5,6.5) {$b$};
\drawsides{3}{6}{1}{1}{0}{0};
\node () at (3.5,6.5) {$b$};
\end{scope}
\end{tikzpicture}
\end{center}
\caption{A stack of two up cups for the SFT in Figure~\ref{fig:BoxesExample2}, and a possible upward continuation with four rows: 1. the up cup continues with another valid row, 2. another picture begins, 3. the picture continues, 4. the picture is completed into a valid picture of $L$. Columns of zeroes are forced on the left and right sides, since the initial pair of up cups is west and east terminating.}
\label{fig:BoxesExtension}
\end{figure}

We immediately conclude that if we start with an $m \times n$ pattern $p$ with only zeroes, except for up cups on the top two lines, then for the next $S_V(m)-1$ rows above, we must have symbols from $A$ with same color (as any bordered box must contain a picture from $L$). Furthermore, if the top row is not a valid bottom row of a picture in $L$, then we need at least $S_V(m)$ rows, since the initial up cup cannot be continued into a bordered box.

The same argument works below an $m \times n$ box, so by using a symmetric ``down cup'' on the bottom of a pattern with the opposite color and positioning it above $p$, we conclude that
\[ B_V(m) \geq 2S_V(m) + 1 \]
for all $m \geq 2$.
The claimed maximum over $m' \leq m$ is obtained by ignoring the $m - m'$ rightmost columns, and using the same argument.
Finally, the lower bound on $B_H(n)$ can be proved symmetrically, since the boxes SFT and $X$ are defined in a way that is obviously invariant under rototranslations.

Now we prove the upper bound $B(n) \leq \max(3, 2 S'(n)+1)$. Suppose by symmetry that we are gluing two globally valid $n \times n$ patterns on top of each other, with $\max(3, 2 S_V'(n') + 1)$ zero rows between them vertically (and any distance between them horizontally).

Consider the bottom pattern $p$, which we may suppose has domain $\llb n \rrb \times \llb n \rrb$. Our plan is to extend it upward into a full row of zeroes, which cuts the configuration in two independent pieces, and then symmetrically extend the pattern above. Figure~\ref{fig:ExtensionToCut} shows a typical example of this extension process.

The nonzero cells on the top row of $p$ split into maximal contiguous intervals. Each of these intervals $I$ is of some length $n' \leq n$. First, if $I$ is north terminating, then by Lemma~\ref{lem:terminating} the only possible upward continuation of $I$ is with zeroes. In particular, the continuation with zeroes must be globally valid, since we assumed $p$ is, and so we may put zeroes above $I$.

Suppose then $I$ does not contain adjacent top borders that receive an inner border.
If $I$ does not contain either of the top corners $(0, n-1), (n-1, n-1)$, then we position a bordered box of width $n'$ and height $S_V(n')$ taken from $L$ (independently of the other intervals) on top, with the same color as $I$.

If the top left corner is in $I$, but the top right corner is not, consider the maximal block of nonzero cells in $p$ that contains $I$.
If this block is west terminating, then, as before, we put a bordered box of width $n'$ and height $S_V(n')$ above it.
Otherwise, we put cells with the symbol $W$ on the row above $I$, with borders on the bottom, on the top, and on the right side of the rightmost cell.
If only the top right corner is in $I$, we do the extension symmetrically, with $E$ in place of $W$.

Finally, suppose that both corners are in $I$, so that is spans the entire row.
If the maximal nonzero block containing $I$ is both east and west terminating, we extend with an $n \times S_V(n)$ bordered box; if it is only west terminating, we extend with an $n \times 1$ box of $E$-symbols that is open on the right; if it is only east terminating, we extend with an $n \times 1$ box of $W$-symbols that is open on the left; and if it is neither east nor west terminating, we extend with and $n \times 1$ box of $E$-symbols that is open on left and right.


Finally, put zeroes on other columns above $p$, and zeroes above the bordered boxes constructed. We observe that the boxes are all of width at most $n$, so on the row $k = \max \{S_V(n') \;|\; n' \leq n\} + 1$ above $p$, we can place a full infinite row of zeroes, which we refer to as the \emph{cut row}.

We claim that the entire pattern under the cut row $k$ is globally valid. For this, first extend $p$ downward until an infinite row of zeroes with the same logic as we did above. Observe that this resulting pattern is locally valid.

We now attempt the symmetric extension process to the left of the pattern between the infinite rows of zeroes.
Thus, consider a maximal vertical interval $J$ of nonzero cells on the left border of the pattern.
If $J$ does not contain one of the corners $(0,0), (0,n-1)$ of $p$, the extension works as before.
Suppose that $J$ contains only the upper left corner $(0,n-1)$.
If $J$ is west terminating, then on top of the corner we have put either a zero, a fully enclosed box or an $E$-box that opens to the right, and in either case we may continue $J$ to the left with zeroes.
Otherwise, we may continue $J$ to the left with a fully enclosed box (or simply an infinite left-open box containing $W$).
The same argument works if $J$ contains only the lower left corner $(0,0)$, or both corners.

\begin{figure}
\begin{center}
\begin{tikzpicture}[scale=0.6]

    \fill[red!20!white] (0,3) rectangle (2,7);
    \fill[blue!30!white] (3,4) rectangle (9,12);
    \fill[red!20!white] (10,3) rectangle (13,7);
    \draw[gray] (-1,1) grid (14, 13);
    
    \draw[very thick](0,2) rectangle (12,6);

    \drawsides{0}{3}{0}{0}{1}{1}
    \node () at (0.5,3.5) {1};
    \drawsides{0}{4}{0}{0}{1}{1}
    \node () at (1.5,3.5) {0};
    \drawsides{0}{5}{0}{0}{1}{1}
    \node () at (0.5,4.5) {N};
    \drawsides{1}{3}{1}{0}{0}{1}
    \node () at (1.5,4.5) {N};
    \drawsides{1}{4}{1}{0}{0}{1}
    \node () at (0.5,5.5) {1};
    \drawsides{1}{5}{1}{0}{0}{1}
    \node () at (1.5,5.5) {0};

    \drawsides{3}{4}{0}{0}{1}{1}
    \node () at (3.5,4.5) {N};
    \drawsides{4}{4}{1}{0}{0}{1}
    \node () at (4.5,4.5) {N};
    \drawsides{5}{4}{1}{0}{0}{1}
    \node () at (5.5,4.5) {0};
    \drawsides{6}{4}{0}{0}{1}{1}
    \node () at (6.5,4.5) {1};
    \drawsides{7}{4}{0}{0}{0}{1}
    \node () at (7.5,4.5) {0};
    \drawsides{8}{4}{1}{0}{0}{1}
    \node () at (8.5,4.5) {0};

    \drawsides{3}{5}{0}{1}{1}{1}
    \node () at (3.5,5.5) {E};
    \drawsides{4}{5}{0}{1}{0}{1}
    \node () at (4.5,5.5) {E};
    \drawsides{5}{5}{0}{1}{0}{1}
    \node () at (5.5,5.5) {E};
    \drawsides{6}{5}{0}{0}{1}{0}
    \node () at (6.5,5.5) {2};
    \drawsides{7}{5}{0}{0}{0}{0}
    \node () at (7.5,5.5) {1};
    \drawsides{8}{5}{1}{0}{0}{0}
    \node () at (8.5,5.5) {0};

    \drawsides{10}{3}{0}{0}{1}{1}
    \drawsides{10}{4}{0}{0}{1}{0}
    \drawsides{10}{5}{0}{0}{1}{0}
    \drawsides{11}{3}{0}{0}{0}{1}
    \drawsides{11}{4}{0}{0}{0}{0}
    \drawsides{11}{5}{0}{0}{0}{0}
    \foreach \x in {10.5, 11.5}
        \foreach \y in {3.5,4.5,5.5} 
          \node () at (\x, \y) {$N$};

    \drawsides{0}{6}{0}{1}{1}{0}
    \node () at (0.5,6.5) {2};
    \drawsides{1}{6}{1}{1}{0}{0}
    \node () at (1.5,6.5) {1};

    \node () at (3.5, 6.5) {$1$};
    \node () at (4.5, 6.5) {$0$};
    \node () at (5.5, 6.5) {$0$};
    \node () at (6.5, 6.5) {$0$};
    \node () at (7.5, 6.5) {$0$};
    \node () at (8.5, 6.5) {$0$};
    \node () at (3.5, 7.5) {$2$};
    \node () at (4.5, 7.5) {$1$};
    \node () at (5.5, 7.5) {$0$};
    \node () at (6.5, 7.5) {$0$};
    \node () at (7.5, 7.5) {$0$};
    \node () at (8.5, 7.5) {$0$};
    \node () at (3.5, 8.5) {$2$};
    \node () at (4.5, 8.5) {$2$};
    \node () at (5.5, 8.5) {$1$};
    \node () at (6.5, 8.5) {$0$};
    \node () at (7.5, 8.5) {$0$};
    \node () at (8.5, 8.5) {$0$};
    \node () at (3.5, 9.5) {$2$};
    \node () at (4.5, 9.5) {$2$};
    \node () at (5.5, 9.5) {$2$};
    \node () at (6.5, 9.5) {$1$};
    \node () at (7.5, 9.5) {$0$};
    \node () at (8.5, 9.5) {$0$};
    \node () at (3.5, 10.5) {$2$};
    \node () at (4.5, 10.5) {$2$};
    \node () at (5.5, 10.5) {$2$};
    \node () at (6.5, 10.5) {$2$};
    \node () at (7.5, 10.5) {$1$};
    \node () at (8.5, 10.5) {$0$};
    \node () at (3.5, 11.5) {$2$};
    \node () at (4.5, 11.5) {$2$};
    \node () at (5.5, 11.5) {$2$};
    \node () at (6.5, 11.5) {$2$};
    \node () at (7.5, 11.5) {$2$};
    \node () at (8.5, 11.5) {$1$};

    \foreach \x in {3,...,8} {
        \drawsides{\x}{6}{0}{0}{0}{1}
        \drawsides{\x}{11}{0}{1}{0}{0}
    }
    \foreach \y in {6,...,11} {
        \drawsides{3}{\y}{0}{0}{1}{0}
        \drawsides{8}{\y}{1}{0}{0}{0}
    }

    \drawsides{10}{6}{0}{1}{1}{1}
    \node () at (10.5, 6.5) {$E$};
    \drawsides{11}{6}{0}{1}{0}{1}
    \node () at (11.5, 6.5) {$E$};

    \foreach \y in {3, 4, 5, 6} {
        \drawsides{12}{\y}{1}{0}{1}{0}
    }
    \drawsides{12}{3}{0}{0}{0}{1}
    \drawsides{12}{6}{0}{1}{0}{0}
    \node () at (12.5,3.5) {1};
    \node () at (12.5,4.5) {2};
    \node () at (12.5,5.5) {2};
    \node () at (12.5,6.5) {2};
    
    \fill[fill opacity = 0.3, color=gray!30!white] (0,2) rectangle (12,6);
\end{tikzpicture}
\end{center}
\caption{An example of how to extend a globally valid $12$-by-$4$ pattern $p$ upward to obtain a cut row, in the case of the language $L_{\textrm{idepi}}$ from Example~\ref{ex:IdEpi}. The domain of $p$ is marked with a rectangle and slightly shaded. On the left, we have a west terminating block, so we continue with zeroes on the left, and extend to a complete block above. On the right, we do not have an east terminating block, so we continue with an $E$-block above (we also show the extension to the right). The central picture is continued to a valid picture in $L_{\textrm{idepi}}$.}
\label{fig:ExtensionToCut}
\end{figure}

We then extend the pattern to the right symmetrically, and finally fill the remainder of the bottom half with zeroes.

Of course, above the cut row of all zeroes, we have an entirely symmetric situation. We conclude that we can also fill the top half in a valid way, since we have the same lower bound for the distance of the other $n \times n$ pattern from the zero row. This concludes the proof.
\end{proof}

\begin{theorem}
Let $f : \Z_+ \to \Z_+$ be a nondecreasing function whose epigraph is recognizable. Then there exists an SFT whose block gluing function satisfies $B(n) \approx B_V(n) \approx B_H(n) \approx f(n)$.
\end{theorem}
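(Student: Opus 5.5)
The plan is to apply Theorem~\ref{thm:mainfunc} twice, once to a language with large vertical size function and once to its $90$-degree rotation, and take the product of the two resulting SFTs. Since Theorem~\ref{thm:mainfunc} gives $B \approx 2S' + 1$, we first halve $f$. Let $g(m) = \lceil f(m)/2 \rceil$. This is nondecreasing, and its epigraph should be recognizable by the layering trick from the proof of Lemma~\ref{lem:kapprox}, with $k = 2$. Each actual cell carries two virtual cells, with the second layer vertically inverted and glued to the first at the top. A picture of actual height $n$ then simulates a picture of height $2n$, in which we run the domino language for the epigraph of $f$. Thus $(m,n)$ is accepted iff $2n \geq f(m)$, i.e.\ iff $n \geq g(m)$. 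By Lemma~\ref{lem:sthing} we get a domino language $L_1$ with $S'_V = g$ and $S'_H = 1$. Let $L_2$ be its rotation by $90$ degrees, which is recognizable, and which has $S'_H = g$ and $S'_V = 1$. In both languages $S' = g$ for $n \geq$ some constant.

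Next, apply Theorem~\ref{thm:mainfunc} to $L_1$ and to $L_2$, obtaining SFTs $X_1$ and $X_2$, and let $X = X_1 \times X_2$, which is again an SFT. For large $n$ we then have
\[ B^{X_1}_V(n) \geq 2g(n)+1, \qquad B^{X_2}_H(n) \geq 2g(n)+1, \qquad B^{X_i}(n) = \max(3, 2g(n)+1). \]
For the product we argue two things. A globally valid pattern of $X$ is exactly a pair of globally valid patterns of the $X_i$ on the same domain, so any gluing in $X$ at gap $k$ projects to gluings in both factors. Hence $B^X_V \geq B^{X_1}_V$ and $B^X_H \geq B^{X_2}_H$. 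Conversely, if both factors can glue at gap $k$, we glue coordinatewise and take the product of the two witnessing configurations.

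The subtle point is the upper bound for the product. The definition of $B$ only asks for gluing at the smallest admissible gap, whereas here both factors must glue at one common gap. I would check that the upper-bound half of the proof of Theorem~\ref{thm:mainfunc} actually shows that gluing succeeds at every gap $\geq \max(3, 2S'(n)+1)$. That proof builds a cut row of zeroes at height $S'_V(n)+1$ above the lower pattern, symmetrically below the upper pattern, and fills anything in between with zeroes, so extra gap is harmless. Granting this, $B^X_H, B^X_V \leq \max(3, 2g(n)+1)$. Combined with the lower bounds, this gives $B^X_H(n) = B^X_V(n) = B^X(n) = 2\lceil f(n)/2\rceil + 1$ for large $n$.

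Finally, $2\lceil f(n)/2\rceil + 1 \in \{f(n)+1, f(n)+2\}$, so this is $\approx f(n)$ provided $f(n) \to \infty$. The main obstacle I expect, besides verifying monotonicity in the gap, is the degenerate case of bounded $f$. Since $f$ is nondecreasing, it is then eventually a constant $c$, and $\approx$ demands eventual equality. For $c \geq 3$ I would try adjusting the halving so that $2S'+1$ hits $c$ exactly when $c$ is odd, and inspect the even case separately. Small constants may need a direct example, e.g.\ a full shift or a simple constant-gluing SFT, with care taken over which constants $\geq 3$ the construction can produce.
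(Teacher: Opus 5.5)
Your proposal is correct and is essentially the paper's proof. You halve $f$: the paper just asserts closure of shapes under vertical squeezing, whereas you give the two-layer construction from Lemma~\ref{lem:kapprox}. You then apply Theorem~\ref{thm:mainfunc} and take the product with the $90$-degree rotation. Your observation that the cut-row argument glues at every gap $\geq \max(3, 2S'(n)+1)$, so that both factors can glue at a common gap, makes explicit a step the paper leaves tacit.

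Two loose ends are shared with the paper's argument, and both are easy to close.

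\emph{Bounded $f$.} For eventually constant $f \equiv c$ a direct example suffices. Take the SFT over $\{0,1\}$ forbidding two $1$s at distance at most $c$ in a common row or column; it has $B_H(n) = B_V(n) = c$ for all $n$.

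\emph{The value $f(1)$.} You should first change $f(1)$ to $1$, which is a finite modification of the epigraph; this is also what the proof of Lemma~\ref{lem:sthing} tacitly needs. Otherwise, if $f(1) \geq 3$, then $g(1) \geq 2$ and $S_H(1) = \infty$. Then a right-infinite horizontal strip of $E$-symbols of height $1$, bordered above, below and on the left, is a valid configuration of $X_1$. By rule (A) the strip stays of height $1$. It can only be terminated by a run of cells owning both their left and right borders, i.e.\ a bordered $w \times 1$ picture of $L_1$, and none exists. So horizontal gluing of a pattern cutting this strip with an all-zero pattern fails at every gap, and Theorem~\ref{thm:mainfunc} does not apply as stated.
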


\begin{proof}
By Lemma~\ref{lem:sthing}, there is a picture language satisfying $S'_V(n) \approx f(n)$ and $S'_H(n) = 1$ for all $n$. The set of shapes of picture languages is closed under vertical squeezing by any rational number, so we can also find a picture language with $S'_V(n) \approx f(n)/2$ and $S'_H(n) = 1$. Then the SFT given by Theorem~\ref{thm:mainfunc} satisfies
\[ B(n) \geq B_V(n) \geq 2 \max\{S_V(n') \;|\; n' \leq n\} + 1 \approx f(n) \]
and
\[ B(n) \leq 2 \max\{S(n') \;|\; n' \leq n\} + 1 \approx f(n), \]
from which also $B_V(n) \approx B(n)$. By taking the Cartesian product of this SFT with its $90$-degree rotation, we obtain $B_V(n) \approx B_H(n) \approx B(n) \approx f(n)$.
\end{proof}

\begin{theorem}
Let $f : \Z_+ \to \Z_+$ be a nondecreasing recognizable function. Then there exists an SFT whose block gluing function satisfies $B(n) \approx B_V(n) \approx B_H(n) \approx f(n)$.
\end{theorem}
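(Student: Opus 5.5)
This is a direct reduction to the preceding theorem, which produces the required SFT for any nondecreasing $f$ with recognizable epigraph. So it suffices to show that the epigraph of $f$ is recognizable whenever $f$ is a nondecreasing recognizable function. Lemma~\ref{lem:RECtoEPI} gives exactly this, with no monotonicity needed.

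Concretely, $f$ is recognizable, so the picture language $\{\una(n,f(n)) \mid n \in \Z_+\}$ is in $\REC_1$. Equivalently, its graph is the shape of some domino language. Lemma~\ref{lem:RECtoEPI} turns this into a domino language whose shape is $\{(m,n) \mid n \geq f(m)\}$, so the epigraph of $f$ is recognizable. Since $f$ is nondecreasing by hypothesis, the previous theorem applies verbatim. It produces an SFT with $B(n) \approx B_V(n) \approx B_H(n) \approx f(n)$.

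The only point to check is that the hypotheses line up: "recognizable function" in the statement means that the graph is recognizable, in the sense of the definition preceding Theorem~\cite{GiRe96}. Lemma~\ref{lem:RECtoEPI} takes exactly that input. The nondecreasing hypothesis is carried over unchanged, and it is what the previous theorem needs through Lemma~\ref{lem:sthing} to get $S'_V = S_V = f$. There is no real obstacle; the proof is a two-line composition of Lemma~\ref{lem:RECtoEPI} with the preceding theorem.
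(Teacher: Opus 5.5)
Your proof is correct and is exactly the intended argument. The paper states this theorem without a separate proof, as the immediate combination of Lemma~\ref{lem:RECtoEPI} (the epigraph of a recognizable function is recognizable) with the preceding theorem for nondecreasing functions with recognizable epigraph.
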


Combining with Proposition~\ref{prop:PosRatPow} gives:

\begin{proposition}
Let $r \in \Q$ be a positive rational number. Then there is an SFT whose block gluing distance satisfies $B(n) \approx B_V(n) \approx B_H(n) \approx n^r$.
\end{proposition}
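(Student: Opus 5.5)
The statement follows by combining Proposition~\ref{prop:PosRatPow} with the preceding theorem on nondecreasing functions with recognizable epigraph. The plan is to take $r = a/b$ with $a,b \geq 1$ and let Proposition~\ref{prop:PosRatPow} supply a function $h$ with $h(n) \approx n^r$ and recognizable epigraph. Its proof gives $h$ explicitly as $h(n) = \lfloor \sqrt[b]{n} \rfloor^a$.

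First I would check the hypotheses of the theorem for this $h$. It maps $\Z_+$ to $\Z_+$, since $\lfloor \sqrt[b]{n}\rfloor \geq 1$ for $n \geq 1$. It is nondecreasing, because $n \mapsto \lfloor \sqrt[b]{n} \rfloor$ is nondecreasing and $x \mapsto x^a$ is increasing on $\Z_+$. Its epigraph is recognizable by Lemma~\ref{lem:gdivf}, applied with $f(x) = x^b$ and $g(k) = k^a$; this is exactly how Proposition~\ref{prop:PosRatPow} was proved. I will rely on that proposition as stated, including the upward determinism of the counter implementation of $x^b$.

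Next I would apply the theorem on nondecreasing functions with recognizable epigraph to $h$. This gives an SFT with $B(n) \approx B_V(n) \approx B_H(n) \approx h(n)$. Since $\approx$ is transitive (ratios tending to $1$ multiply), it follows that $B(n) \approx B_V(n) \approx B_H(n) \approx n^r$.

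The only point needing care is hidden in that theorem's proof: the vertical squeezing by the factor $1/2$ must preserve both recognizability and the asymptotic, which is why the factor $2$ in Theorem~\ref{thm:mainfunc} cancels. Here it suffices to note that $h/2$ still tends to infinity and is nondecreasing up to rounding, so the $+1$ and rounding errors are $o(h(n))$. I expect no real obstacle beyond confirming $h(n) \to \infty$, which holds because $r > 0$.
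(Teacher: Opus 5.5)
Your proposal is correct and takes the paper's route: the paper obtains this proposition by feeding the function $\lfloor \sqrt[b]{n}\rfloor^a$ from Proposition~\ref{prop:PosRatPow} into the theorem on nondecreasing functions with recognizable epigraph. Your side checks are the routine verifications that combination needs: values in $\Z_+$, monotonicity, $h \to \infty$ so that the additive constants are negligible, and transitivity of $\approx$.
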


\section{An aperiodic zero-entropy example}

We now prove that there are non-trivial (even aperiodic) linearly block gluing SFTs with zero entropy. The technical idea is somewhat similar to \cite{GaSa21}, where they apply a distortion operation to an SFT. We use instead a rigid wobbling method, which gives only entropy dimension $1$ rather than increasing the entropy.

\subsection{Wobbling an SFT}

Let $T$ be a set of Wang tiles with edge colors from $C$.
We define an SFT $W_T$ whose configurations are ``wobbly'' versions of those of $X_T$.
The construction could be extended to transform arbitrary SFTs into wobbly SFTs (or indeed arbitrary subshifts to wobbly subshifts), but we prefer to work with Wang tiles for simplicity's sake.

The alphabet of $W_T$ consists of all tiles of $T$ with any subset of edges decorated with \emph{fault lines}, plus the special tile $\vortex{0.25}$ called the \emph{vortex} with fault lines on all four sides.
It can be seen as a set of Wang tiles with edge colors $(C \cup \{\#\}) \times \{0,1\}$ with $1$ denoting a fault line and $\vortex{0.25}$ encoded as $({\#}, 1)^4$, but the adjacency rules are not those of Wang tiles.
Instead, the local rules of $W_T$ are as follows (see Figure~\ref{fig:wobble-rules}).
\begin{enumerate}
\item
  Fault lines must be matched with fault lines.
\item
  A $\vortex{0.25}$ cannot be adjacent to another $\vortex{0.25}$.
  The fault lines at the east, north, west and south edges of $\vortex{0.25}$ must continue to the north, east, south and west, respectively.
  In all other situations a fault line must continue in both directions and cannot cross or meet another fault line.
\item
  If an edge between adjacent tiles has no fault line, the $C$-colors of that edge must match.
\item
  A tile $t$ at $(i,j)$ has north color $(c, 1)$ with $c \in C$ (so a fault line) if and only if the tile at $(i+1,j+1)$ has south color $(c,1)$. Symmetrically, $t$ has east color $(c,1)$ with $c \in C$ if and only if the tile at $(i+1,j-1)$ has west color $(c,1)$.
\end{enumerate}

\begin{figure}[htp]
  \centering
  \begin{tikzpicture}

    \node at (-0.3,2.5) {1.};
    
    \fill[gray] (0.9,0) rectangle (1.1,1);
    \draw (0,0) rectangle (2,1);
    \draw (1,0) -- (1,1);

    \begin{scope}[shift={(2.5,0)}]
      \fill[gray] (0,0.9) rectangle (1,1.1);
      \draw (0,0) rectangle (1,2);
      \draw (0,1) -- (1,1);
    \end{scope}

    \begin{scope}[shift={(5,0)}]
      \node at (-0.3,2.5) {2.};
      
      \fill[gray] (0,1.9) -| (0.9,0) -| (1.1,0.9) -| (3,1.1) -| (2.1,3) -| (1.9,2.1) -| cycle;
      \drawvortex{(1,1)}
      \draw (0,0) grid (3,3);
      \foreach \x/\y in {1/0, 0/1, 1/2, 2/1}{
        \node at (\x+0.5,\y+0.5) {$\in T$};
      }
    \end{scope}

    \begin{scope}[shift={(0,-3)}]
      \node at (-0.3,2.5) {3.};
      
      \draw (0,0) rectangle (2,1);
      \draw (1,0) -- (1,1);
      \node at (0.8,0.5) {$c$};
      \node at (1.2,0.5) {$c$};

      \begin{scope}[shift={(2.5,0)}]
        \draw (0,0) rectangle (1,2);
        \draw (0,1) -- (1,1);
      \node at (0.5,0.8) {$c$};
      \node at (0.5,1.2) {$c$};
      \end{scope}
    \end{scope}

    \begin{scope}[shift={(5,-3)}]
      \node at (-0.3,2.5) {4.};
      
      \fill[gray] (0,0.9) rectangle (2,1.1);
      \draw (0,0) grid (2,2);
      \node at (1.5,1.2) {$c$};
      \node at (0.5,0.8) {$c$};

      \begin{scope}[shift={(2.5,0)}]
        \fill[gray] (0.9,0) rectangle (1.1,2);
        \draw (0,0) grid (2,2);
        \node at (0.8,1.5) {$c$};
        \node at (1.2,0.5) {$c$};
      \end{scope}
    \end{scope}

  \end{tikzpicture}
  \caption{The local rules of $W_T$.}
  \label{fig:wobble-rules}
\end{figure}

The intuition for these rules is the following.
We think of a configuration of $W_T$ as a tiling in $X_T$ to which some number of horizontal and vertical shears have been applied.
Each horizontal fault line represents a shear in which the north half of the tiling has been shifted one step to the east relative to the south half.
The color matching rules are shifted accordingly.
Symmetrically, each vertical fault line represents a shear in which the east half has been shifted one step to the south.
If fault lines would cross, an empty space is created, in which we place the vortex tile $\vortex{0.25}$.

\subsection{Constructing configurations of $W_T$}

We present a homeomorphism $f$ between $X_T \times \{0,1\}^\Z \times \{0,1\}^\Z$ and the subset of configurations of $W_T$ that don't have a $\vortex{0.25}$ at the origin.
This describes all configurations of $W_T$ up to a translation by $(1,0)$.
The map $f$ itself will be used in the proof of linear block gluing.

For $y, z \in \{0,1\}^\Z$, we define a function $g_{y,z} : \Z^2 \to \Z^2$ by $g_{y,z}(0,0) = (0,0)$, $g_{y,z}(a+1, b) = g_{y,z}(a,b) + (1, -y_a)$ and $g_{y,z}(a,b+1) = g_{y,z}(a,b) + (z_b, 1)$.
By induction, one obtains the formula
\begin{equation}
\label{eq:gyz-addition}
    g_{y,z}(a+i, b+j) = g_{y,z}(a,b) + (i + \sum z_{[b, b+j)}, j-\sum y_{[a, a+i)})
\end{equation}
for all $(a,b) \in \Z^2$ and $i, j \geq 0$.
It is easy to see that different choices of $y$ and $z$ result in different functions $g_{y,z}$.

Consider now the image of a single horizontal line: $H_b = g_{y,z}(\Z \times \{b\})$.
We have $g_{y,z}(a+1,b) \in g_{y,z}(a,b) + \{(1,0), (1,-1)\}$ for all $a \in \Z$, so $H_b$ looks like a horizontal line that sometimes tends to the southeast.
If $z_b = 0$, then $g_{y,z}(a,b+1) = g_{y,z}(a,b) + (0,1)$ for all $a \in \Z$, so in this case $H_{b+1} = H_b + (0,1)$.
In particular, $(i+1,j)$ is in the image of $g_{x,y}$ for each $(i,j) \in H_b$.
If $z_b = 1$, then $g_{y,z}(a,b+1) = g_{y,z}(a,b) + (1,1)$ for all $a \in \Z$, so in this case $H_{b+1} = H_b + (1,1)$.
For each coordinate $g_{z,y}(a,b)$ with $y_a = 1$, the neighbor $g_{y,z}(a,b) + (1,0)$ is not in the image of $g_{y,z}$.
This completely describes the image of $g_{y,z}$, which contains all of $\Z^2$ except $g_{y,z}(a,b) + (1,0)$ for $y_a = z_b = 1$.
We also see that if $\vec v \notin g_{y,z}(\Z^2)$, then each neighbor of $\vec v$ is in $g_{y,z}(\Z^2)$.
Finally, since the images $H_b$ are disjoint for different $b \in \Z$, the function $g_{y,z}$ is injective.
We have proved the following result.

\begin{lemma}
\label{lem:gyz-injective}
    For all $y, z \in \{0,1\}^\Z$, the function $g_{y,z} : \Z^2 \to \Z^2$ is injective, and $g_{y,z}(\Z^2) = \Z^2 \setminus \{g_{y,z}(a,b) + (1,0) \mid y_a = z_b = 1 \}$.
    The function $(y,z) \mapsto g_{y,z}$ is also injective.
\end{lemma}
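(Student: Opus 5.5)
Everything rests on the explicit formula \eqref{eq:gyz-addition}, which I would first verify by induction on $i$ and $j$ separately. For fixed $b$ and $i\ge 0$, induction on $i$ using $g_{y,z}(a+1,b)=g_{y,z}(a,b)+(1,-y_a)$ gives $g_{y,z}(a+i,b)=g_{y,z}(a,b)+(i,-\sum y_{[a,a+i)})$. Then induction on $j$ using $g_{y,z}(a,b+1)=g_{y,z}(a,b)+(z_b,1)$ adds $(\sum z_{[b,b+j)},j)$. Both recursions are consistent, since going right then up or up then right both add $(1+z_b,1-y_a)$; this is why $g_{y,z}$ is well defined on all of $\Z^2$, including negative coordinates, and why \eqref{eq:gyz-addition} holds. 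Writing $g_{y,z}(a,b)=(X(a,b),Y(a,b))$, the formula gives $X(a,b)=a+\zeta(b)$ and $Y(a,b)=b-\eta(a)$, where $\eta,\zeta$ are the signed partial sums of $y,z$, so that $\eta(a+1)-\eta(a)=y_a$ and $\zeta(b+1)-\zeta(b)=z_b$.

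\emph{Injectivity of $g_{y,z}$.} For a fixed row $b$, $a\mapsto X(a,b)$ is strictly increasing, so each $H_b$ meets each column at most once. To separate different rows I would compare row $b$ with row $b+1$ in a fixed column $x$. The point of $H_{b+1}$ in column $x$ is either the point of $H_b$ in column $x$ shifted up by one (case $z_b=0$), or, when $z_b=1$, the point of $H_b$ in column $x-1$ shifted by $(1,1)$. In the latter case its height is $b+1-\eta(a)$ with $a=x-1-\zeta(b)$, which is strictly greater than the height $b-\eta(a+1)$ of the row-$b$ point in column $x$. By induction, within each column the heights are strictly increasing in $b$. Hence the sets $H_b$ are pairwise disjoint, and since each $g_{y,z}|_{\Z\times\{b\}}$ is injective, $g_{y,z}$ is injective.

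\emph{The image.} Fix a column $x$. The heights in column $x$ increase in $b$ by steps of $1$, except when $z_b=1$ and $y_{a}=1$ with $a=x-1-\zeta(b)$, where the step is $2$. At such a step exactly one height is skipped, namely $g_{y,z}(a,b)+(1,0)$. I would also check that each column is unbounded above and below, since $Y\to\pm\infty$ as $b\to\pm\infty$ with column fixed, because $\eta$ changes by at most one per unit step. The missing points are therefore exactly $g_{y,z}(a,b)+(1,0)$ with $y_a=z_b=1$.

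\emph{Injectivity of $(y,z)\mapsto g_{y,z}$.} This is immediate: $y_a$ and $z_b$ are recovered from $g_{y,z}(a+1,0)-g_{y,z}(a,0)$ and $g_{y,z}(0,b+1)-g_{y,z}(0,b)$.

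The main obstacle is the column-wise bookkeeping in the image step, in particular confirming that the skipped point in a column is indeed $g(a,b)+(1,0)$ with the correct indices $a,b$ and not some other point. I would handle this by writing both candidate column points explicitly via \eqref{eq:gyz-addition}.
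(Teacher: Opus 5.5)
Your proposal is correct and follows essentially the paper's own argument: rows $H_b$ are shifted by $(0,1)$ or $(1,1)$ according to $z_b$, and the cell $g_{y,z}(a,b)+(1,0)$ is skipped exactly when $y_a=z_b=1$. Your column-wise bookkeeping with the closed form $g_{y,z}(a,b)=(a+\zeta(b),\,b-\eta(a))$ makes explicit the disjointness of the $H_b$, the full coverage of each column (which already follows from the strict increase of integer heights), and the recovery of $(y,z)$ from $g_{y,z}$, all of which the paper only asserts.
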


Let $(x, y, z) \in X_T \times \{0,1\}^\Z \times \{0,1\}^\Z$ be arbitrary.
We define the image $x' = f(x, y, z)$ as follows.
For each $(a,b) \in \Z^2$, place the tile $x_{(a,b)}$ at $g_{y,z}(a,b)$.
Place a fault line on its east, north, west and south edge if and only if $y_a = 1$, $z_b = 1$, $y_{a-1} = 1$ and $z_{b-1} = 1$, respectively.
On each coordinate in $\Z^2 \setminus g_{y,z}(\Z^2)$, place a $\vortex{0.25}$.
As in \cite{GaSa21}, the general idea is to distort or ``wobble'' the configuration $x$, shearing it both in the horizontal and vertical direction, in order to move a given finite pattern of tiles into a predetermined position relative to the origin.
In \cite{GaSa21} shear maps can be applied independently in the horizontal and vertical directions, while in our construction the two directions are coupled.

\begin{lemma}
    The function $f$ is a homeomorphism from $X_T \times \{0,1\}^\Z \times \{0,1\}^\Z$ to the subset $W_T \setminus [\vortex{0.25}]_{(0,0)}$.
\end{lemma}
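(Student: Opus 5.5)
We need to show four things: $f$ lands in $W_T$ and avoids a vortex at the origin; $f$ is continuous; $f$ is injective; and $f$ maps onto $W_T \setminus [\vortex{0.25}]_{(0,0)}$. Since the domain is compact and the target is Hausdorff, a continuous bijection is then automatically a homeomorphism.

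\textbf{Validity.} By Lemma~\ref{lem:gyz-injective}, $x'$ is well defined. The origin equals $g_{y,z}(0,0)$, so it carries a tile rather than a vortex. We check the four local rules one by one.

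Take two horizontally adjacent images $g(a,b)$ and $g(a,b)+(1,0)$. If $y_a = 0$, the second point is $g(a+1,b)$. The shared edge then has no fault line on either side, because the east edge of $g(a,b)$ has a fault iff $y_a=1$, and the west edge of $g(a+1,b)$ has a fault iff $y_{(a+1)-1}=1$. The colors match because $x \in X_T$.

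If $y_a = 1$, then $g(a+1,b) = g(a,b)+(1,-1)$. Here the point $g(a,b)+(1,0)$ is either a vortex (when $z_b=1$) or equals $g(a+1,b+1)$ (when $z_b = 0$). Indeed, if $z_b=0$ then $g(a+1,b+1) = g(a+1,b)+(0,1)$. In the second case both sides carry a fault line. Rule~4 demands that the east color of the tile at $g(a,b)$ equals the west color of the tile at $g(a,b)+(1,-1) = g(a+1,b)$. This is exactly the $X_T$ matching between $x_{(a,b)}$ and $x_{(a+1,b)}$, and it also gives the "if and only if" of rule~4. Vertical adjacency is handled symmetrically using $z_b$ and the offset $(1,1)$.

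The fault lines of $x'$ are the images of the "columns" $a$ with $y_a=1$ and the "rows" $b$ with $z_b=1$. From the description of $H_b$, these form continuous curves. A vortex appears exactly where row $b$ and column $a$ with $y_a=z_b=1$ would cross. We still need to check the turning pattern at the vortex in rule~2: the fault lines at the east, north, west and south edges of the vortex continue north, east, south and west respectively. This follows from tracking the four neighbors $g(a,b)$, $g(a+1,b)$, $g(a,b+1)$ and $g(a+1,b+1)$ around $g(a,b)+(1,0)$. The same computation shows that vortices are never adjacent, which Lemma~\ref{lem:gyz-injective} already gives.

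\textbf{Continuity.} This is a locality statement. The restriction $x'|_{[-r,r]^2}$ depends only on $x|_{[-R,R]^2}$, $y|_{[-R,R]}$ and $z|_{[-R,R]}$ for $R = 2r$. The reason is formula~\eqref{eq:gyz-addition}: $g$ moves points by at most a factor $2$ in $\ell^\infty$, and $g(a,b)$ is determined by the entries of $y$ and $z$ between $0$ and $a$, respectively $b$.

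\textbf{Injectivity.} We recover $(y,z)$ from $x'$ by walking from the origin. Starting at $(0,0)$, read the east edge of the current tile: $y_0=1$ iff it has a fault line. Then step to $g(1,0)$, which is $(1,0)$ or $(1,-1)$ accordingly, and continue this way along both axes and in both directions. Each step reveals the next bit and the next position. This determines all of $y$ and $z$, hence $g_{y,z}$, and then $x_{(a,b)} = x'_{g(a,b)}$.

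\textbf{Surjectivity (main obstacle).} Given $w \in W_T$ without a vortex at the origin, we run the same walk to define $y$ and $z$. We must show that $w$ is exactly $f(x,y,z)$ for $x_{(a,b)} := w_{g(a,b)}$. Concretely, we must show that every fault line of $w$ is a full curve of the form predicted by $g$. Otherwise, for instance, the fault lines read off the axes could fail to agree with the fault lines elsewhere.

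The plan is an induction outward in both coordinates. Suppose the tile at $g(a,b)$ is known to be a tile (not a vortex) whose east and north fault bits are $y_a$ and $z_b$. Rule~2 forces fault lines to continue straight or to turn only at vortices. Hence the fault bit on the north edge of $g(a+1,b)$ equals that of $g(a,b)$, taking the offset by the vortex into account when $y_a=z_b=1$. A case analysis on $(y_a,z_b)$ with rules~1, 2 and~4 shows three things: $g(a+1,b)$ and $g(a,b+1)$ are non-vortex tiles, they carry the predicted fault bits, and a vortex sits at $g(a,b)+(1,0)$ exactly when $y_a=z_b=1$.

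The induction covers all of $\mathbb{Z}^2$, and by Lemma~\ref{lem:gyz-injective} every cell is either a $g$-image or a predicted vortex. The color rules~3 and~4 then translate into the Wang matching of $x$, so $x \in X_T$. This local case analysis, especially making the vortex turning convention consistent in all four quadrants, is where the real work lies.
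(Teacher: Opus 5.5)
Your proposal is correct and follows the paper's proof essentially step for step. The ingredients match: the case split on $y_a$ (and $z_b$) for rules 1, 3 and 4; the four neighbours $g(a,b)$, $g(a+1,b)$, $g(a,b+1)$, $g(a+1,b+1)$ of a vortex for rule 2; reading $(y,z)$ off the fault bits outward from the origin for injectivity and surjectivity; and locality for continuity. You additionally spell out the compactness argument and the outward induction for surjectivity, where the paper only asserts that they work.

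One small caveat is inherited from the paper's wording of rule 2. If you actually track those four neighbours, you find that the vortex's north-edge fault continues west and its south-edge fault continues east, as drawn in the figure of the local rules. The quoted text instead says east and west respectively, so the claim as you stated it does not literally follow from the neighbour computation.
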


\begin{proof}
We first show that $x' = f(x,y,z) \in W_T\setminus [\vortex{0.25}]_{(0,0)}$.
Since $g_{y,z}(0,0) = (0,0)$, we have $x'_{(0,0)} \neq \vortex{0.25}$.

Consider an arbitrary $(a,b) \in \Z^2$ and the corresponding tile of $x'$ at $g_{y,z}(a,b)$, which contains $x_{(a,b)}$.
If $y_a = 0$, then the tile does not have a fault line on its east edge and the east neighbor $g_{y,z}(a+1,b) = g_{y,z}(a,b) + (1,0)$ contains $x_{(a+1,b)}$ without a fault line on its west edge.
These tiles satisfy rules 1 and 3.

If $y_a = 1$, then the tile has a fault line on its east edge.
Now the tile $x_{(a+1,b)}$ lies at $g_{y,z}(a+1, b) = g_{y,z}(a,b) + (1,-1)$ and has a fault line on its west edge, satisfying rule 4.
The east neighbor $g_{y,z}(a,b) + (1,0)$ is either $g_{y,z}(a+1,b+1)$ in the case $z_b = 0$, or a $\vortex{0.25}$ in the case $z_b = 1$, and in both cases has a fault line on its west edge.
Hence rules 1 and 4 are satisfied.

We have shown that the vertical edges of tiles in $x'$ follow rules 1, 3 and 4.
The case of horizontal edges is similar.
As for Rule 2, our description of $g_{y,z}(\Z^2)$ implies that no adjacent $\vortex{0.25}$-tiles occur.
Consider a $\vortex{0.25}$ at $g_{y,z}(a,b) + (1,0)$ with $y_a = z_b = 1$.
Its north neighbor is at $g_{y,z}(a,b) + (1,1) = g_{y,z}(a,b+1)$ and hence has a fault line on its east border, continuing the one on the east border of the $\vortex{0.25}$.
Symmetrically, we see that the fault lines on the other edges of the $\vortex{0.25}$ continue in the correct directions.

Consider then a tile $t$ at $g_{y,z}(a,b)$ with $y_a = 1$, with a fault line on its east border.
If $z_b = 1$, then $g_{y,z}(a,b) + (1,0)$ has a $\vortex{0.25}$-tile and the fault line is not required to continue to the north.
Otherwise, $g_{y,z}(a,b+1) = g_{y,z}(a,b) + (0,1)$ also has a fault line on its east border, continuing the one of $t$.
On the other hand, if $z_{b-1} = 1$ then $g_{y,z}(a,b) + (0,-1)$ has a $\vortex{0.25}$-tile, and otherwise it has another tile with a fault line on its east border; in both cases the fault line of $t$ continues to the south.
Hence fault lines on the east borders of tiles continue correctly.
The case of other fault lines is similar, so Rule 2 holds.
Hence $x' \in W_T$.

Injectivity of $f$ follows from Lemma~\ref{lem:gyz-injective}.
For surjectivity, consider and arbitrary $x' \in W_T\setminus [\vortex{0.25}]_{(0,0)}$.
Define $y, z \in \{0,1\}^\Z$ as follows: $y_k = 1$ if and only if the east border of the $k$th non-$\vortex{0.25}$ tile to the east of the origin has a fault line (the origin is the $0$th tile and the numbering extends to $k < 0$ as well), and similarly for $z$ and north borders of tiles to the north of the origin.
From the local rules of $W_T$ and the definition of $g_{y,z}$ we can deduce that the non-$\vortex{0.25}$ tiles of $x'$ lie exactly on the image of $g_{y,z}$.
The configuration $x \in X_T$ obtained by pulling these tiles back through $g_{y,z}$ satisfies $f(x,y,z) = x'$.

Finally, $f$ is continuous: denoting $D_k = \{(a,b) \in \Z^2 \mid |a| + |b| < k\}$, the pattern $f(x,y,z)|_{D_k}$ is determined by $x|_{[-2k+1, 2k]^2}$, $y|_{[-2k+1, 2k]}$ and $z|_{[-2k+1, 2k]}$.
\end{proof}

Consider the image $x' = f(x, y, z)$ of some $x \in X_T$ and $y, z \in \{0,1\}^\Z$.
Modifying the tiling $x$ does not change the structure of fault lines and $\vortex{0.25}$-tiles in $x'$, but only the contents of the non-$\vortex{0.25}$ tiles via the function $g_{y,z}$.
Suppose then that we replace $y_a = 0$ with a $1$.
This causes all tiles of $V_a = g_{y,z}(\{a\} \times \Z)$ to gain a fault line on their east edges and all tiles in $V_{a+1}$ to gain a fault line on their west edges.
If $a \geq 0$, then all tiles in $\bigcup_{i > a} V_i$ are shifted one step to the south, and if $a < 0$, then all tile in $\bigcup_{i \leq a} V_i$ are shifted one step to the north, with $\vortex{0.25}$-tiles filling the resulting gaps.
We can interpret this as introducing a new, infinitely long vertical fault line into $x'$.
Figure~\ref{fig:new-fault-line}, where the origin is marked with a dot, depicts this modification.
Conversely, replacing $y_a = 1$ with a $0$ will erase such a fault line, and modifying $z$ corresponds to creating and erasing infinite vertical fault lines and shifting parts of the configuration horizontally.

\begin{figure}[htp]
  \centering
  \begin{tikzpicture}[scale=0.4]

    \fill[gray] (-2,-1.1) rectangle (5,-0.9);
    \fill[gray] (-2,2.1) rectangle (5,1.9);

    \draw (-2,-3) grid (5,4);
    \fill (0.5,0.5) circle (0.2cm);
    \draw[dashed] (2,0.5) ellipse (0.3cm and 0.8cm);

    \begin{scope}[shift={(8,0)}]
      \fill[gray] (-2,-1.1) rectangle (5,-0.9);
      \fill[gray] (-2,2.1) rectangle (5,1.9);
      \fill[gray] (0.9,-3) rectangle (1.1,-1);
      \fill[gray] (1.9,-1) rectangle (2.1,2);
      \fill[gray] (2.9,2) rectangle (3.1,4);

      \draw (-2,-3) grid (5,4);
      \fill (0.5,0.5) circle (0.2cm);
    \end{scope}

    \begin{scope}[shift={(16,0)}]
      \fill[gray] (-2,-1.1) rectangle (2,-0.9);
      \fill[gray] (-2,2.1) rectangle (3,1.9);
      \fill[gray] (0.9,-3) rectangle (1.1,-1);
      \fill[gray] (1.9,-2) rectangle (2.1,2);
      \fill[gray] (2.9,1) rectangle (3.1,4);
      \fill[gray] (1,-2.1) rectangle (5,-1.9);
      \fill[gray] (2,0.9) rectangle (5,1.1);

      \drawvortex{(1,-2)}
      \drawvortex{(2,1)}

      \draw (-2,-3) grid (5,4);
      \fill (0.5,0.5) circle (0.2cm);
      \node at (3.5,1.5) {$\downarrow$};
      \node at (2.5,-1.5) {$\downarrow$};
    \end{scope}
    
  \end{tikzpicture}
  \caption{Adding a new vertical fault line by replacing $y_1 = 0$ with a $1$.}
  \label{fig:new-fault-line}
\end{figure}

One can also think of $f(x, y, z)$ as the limit point of the process that starts with $x$ and proceeds by adding the fault lines dictated by $y$ and $z$ one by one, starting from the ones closest to the origin.
The process is depicted in Figure~\ref{fig:wobble-construction}.

\begin{figure}[htp]
  \centering
  \begin{tikzpicture}[scale=0.29]

    \node at (0,7.5) {$y = \ldots 0 0 0 0 1 0 1 . 0 1 0 0 0 0 \ldots$};
    \node at (0,6) {$z = \ldots 0 1 0 0 0 1 0 . 1 1 0 0 0 0 \ldots$};

    \node [left] at (-3.5,0.5) {$x = {}$};
    \draw (-4,-4) grid (5,5);
    \fill (0.5,0.5) circle (0.2cm);
    \foreach \x/\y/\l in {
      1/0/a, 1/1/a, 0/1/a, -1/0/a, -1/1/a,
      -2/2/c, -1/2/c, 0/2/c, 1/2/c, 2/2/c,
      -2/-2/e, -2/-3/e, -3/-3/e, 2/-2/e, 2/-3/e, 3/-3/e
    }{
      \node at (\x+0.5, \y+0.5) {$\l$};
    }

    \begin{scope}[shift={(14,0)}]
      \node [left] at (-5.5,0.5) {$\rightarrow$};
      \fill[gray] (-0.1,-4) rectangle (0.1,5);
      \draw (-4,-4) grid (5,5);
      \fill (0.5,0.5) circle (0.2cm);
      \foreach \x/\y/\l in {
        1/0/a, 1/1/a, 0/1/a, -1/1/a, -1/2/a,
        -2/3/c, -1/3/c, 0/2/c, 1/2/c, 2/2/c,
        -2/-1/e, -2/-2/e, -3/-2/e, 2/-2/e, 2/-3/e, 3/-3/e
      }{
        \node at (\x+0.5, \y+0.5) {$\l$};
      }
    \end{scope}

    \begin{scope}[shift={(28,0)}]
      \node [left] at (-5.5,0.5) {$\rightarrow$};
      \fill[gray] (-4,1.9) rectangle (1,2.1);
      \fill[gray] (0,0.9) rectangle (5,1.1);
      \fill[gray] (-0.1,-4) rectangle (0.1,2);
      \fill[gray] (0.9,1) rectangle (1.1,5);
      \drawvortex{(0,1)}
      \draw (-4,-4) grid (5,5);
      \fill (0.5,0.5) circle (0.2cm);
      \foreach \x/\y/\l in {
        1/0/a, 2/1/a, 1/1/a, -1/1/a, 0/2/a,
        -1/3/c, 0/3/c, 1/2/c, 2/2/c, 3/2/c,
        -2/-1/e, -2/-2/e, -3/-2/e, 2/-2/e, 2/-3/e, 3/-3/e
      }{
        \node at (\x+0.5, \y+0.5) {$\l$};
      }
    \end{scope}

    \begin{scope}[shift={(0,-10)}]
      \node [left] at (-5.5,0.5) {$\rightarrow$};
      \fill[gray] (-4,1.9) rectangle (1,2.1);
      \fill[gray] (0,0.9) rectangle (3,1.1);
      \fill[gray] (2,-0.1) rectangle (5,0.1);
      \fill[gray] (-0.1,-4) rectangle (0.1,2);
      \fill[gray] (0.9,1) rectangle (1.1,5);
      \fill[gray] (1.9,-4) rectangle (2.1,1);
      \fill[gray] (2.9,0) rectangle (3.1,5);
      \drawvortex{(0,1)}
      \drawvortex{(2,0)}
      \draw (-4,-4) grid (5,5);
      \fill (0.5,0.5) circle (0.2cm);
      \foreach \x/\y/\l in {
        1/0/a, 2/1/a, 1/1/a, -1/1/a, 0/2/a,
        -1/3/c, 0/3/c, 1/2/c, 2/2/c, 3/1/c,
        -2/-1/e, -2/-2/e, -3/-2/e, 2/-3/e, 2/-4/e, 3/-4/e
      }{
        \node at (\x+0.5, \y+0.5) {$\l$};
      }
    \end{scope}

    \begin{scope}[shift={(14,-10)}]
      \node [left] at (-5.5,0.5) {$\rightarrow$};
      \fill[gray] (-4,1.9) rectangle (4,2.1);
      \fill[gray] (0,0.9) rectangle (5,1.1);
      \fill[gray] (2,-0.1) rectangle (5,0.1);
      \fill[gray] (-0.1,-1) rectangle (0.1,2);
      \fill[gray] (0.9,1) rectangle (1.1,3);
      \fill[gray] (1.9,-2) rectangle (2.1,1);
      \fill[gray] (2.9,0) rectangle (3.1,2);
      \fill[gray] (-4,2.9) rectangle (2,3.1);
      \fill[gray] (1.9,2) rectangle (2.1,5);
      \fill[gray] (3.9,1) rectangle (4.1,5);
      \fill[gray] (-4,0.1) rectangle (0,-0.1);
      \fill[gray] (-1,-0.9) rectangle (2,-1.1);
      \fill[gray] (1,-1.9) rectangle (5,-2.1);
      \fill[gray] (-1.1,-4) rectangle (-0.9,0);
      \fill[gray] (0.9,-4) rectangle (1.1,-1);
      \drawvortex{(0,1)}
      \drawvortex{(2,0)}
      \drawvortex{(1,2)}
      \drawvortex{(3,1)}
      \drawvortex{(-1,-1)}
      \drawvortex{(1,-2)}
      \draw (-4,-4) grid (5,5);
      \fill (0.5,0.5) circle (0.2cm);
      \foreach \x/\y/\l in {
        1/0/a, 2/1/a, 1/1/a, -1/1/a, 0/2/a,
        0/3/c, 1/3/c, 2/2/c, 3/2/c, 4/1/c,
        -3/-1/e, -3/-2/e, -4/-2/e, 1/-3/e, 1/-4/e, 2/-4/e
      }{
        \node at (\x+0.5, \y+0.5) {$\l$};
      }
    \end{scope}

    \begin{scope}[shift={(28,-10)}]
      \node [left] at (-3.75,0.5) {$\cdots x' =$};
      \fill[gray] (-2,1.9) rectangle (4,2.1);
      \fill[gray] (0,0.9) rectangle (5,1.1);
      \fill[gray] (2,-0.1) rectangle (5,0.1);
      \fill[gray] (-0.1,-1) rectangle (0.1,2);
      \fill[gray] (0.9,1) rectangle (1.1,3);
      \fill[gray] (1.9,-2) rectangle (2.1,1);
      \fill[gray] (2.9,0) rectangle (3.1,2);
      \fill[gray] (-4,2.9) rectangle (2,3.1);
      \fill[gray] (1.9,2) rectangle (2.1,5);
      \fill[gray] (3.9,1) rectangle (4.1,5);
      \fill[gray] (-3,0.1) rectangle (0,-0.1);
      \fill[gray] (-1,-0.9) rectangle (2,-1.1);
      \fill[gray] (1,-1.9) rectangle (5,-2.1);
      \fill[gray] (-1.1,-4) rectangle (-0.9,0);
      \fill[gray] (0.9,-4) rectangle (1.1,-1);
      \fill[gray] (-3.1,-4) rectangle (-2.9,1);
      \fill[gray] (-2.1,0) rectangle (-1.9,3);
      \fill[gray] (-1.1,2) rectangle (-0.9,4);
      \fill[gray] (-0.1,3) rectangle (0.1,5);
      \fill[gray] (-4,1.1) rectangle (-2,0.9);
      \fill[gray] (-4,3.9) rectangle (0,4.1);
      \fill[gray] (-4,-3.1) rectangle (-3,-2.9);
      \fill[gray] (-3,-4) rectangle (-1,-3.9);
      \drawvortex{(0,1)}
      \drawvortex{(2,0)}
      \drawvortex{(1,2)}
      \drawvortex{(3,1)}
      \drawvortex{(-1,-1)}
      \drawvortex{(1,-2)}
      \drawvortex{(-3,0)}
      \drawvortex{(-2,2)}
      \drawvortex{(-1,3)}
      \drawvortex{(-4,-4)}
      \draw (-4,-4) grid (5,5);
      \fill (0.5,0.5) circle (0.2cm);
      \foreach \x/\y/\l in {
        1/0/a, 2/1/a, 1/1/a, -1/1/a, 0/2/a,
        0/3/c, 1/3/c, 2/2/c, 3/2/c, 4/1/c,
        -3/-1/e, -3/-2/e, -4/-1/e, 1/-3/e, 1/-4/e, 2/-4/e
      }{
        \node at (\x+0.5, \y+0.5) {$\l$};
      }
    \end{scope}

  \end{tikzpicture}
  \caption{The construction of $x' = f(x,y,z)$ in stages.}
  \label{fig:wobble-construction}
\end{figure}

Finally, $f$ is approximately shift-equivariant in the following sense.

\begin{lemma}
\label{lem:f-shift}
    For all $x \in X_T$, $y, z \in \{0,1\}^\Z$ and $(a,b) \in \Z^2$ we have
    \[
    f(\sigma^{(a,b)} x, \sigma^a y , \sigma^b z) = \sigma^{g_{y,z}(a,b)}f(x,y,z).
    \]
\end{lemma}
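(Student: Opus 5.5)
The plan is to compare the two configurations cell by cell, using the explicit description of $f$ in terms of $g_{y,z}$. Write $y' = \sigma^a y$, $z' = \sigma^b z$ and $x'' = \sigma^{(a,b)}x$, so $y'_i = y_{i+a}$ and $z'_j = z_{j+b}$. The key identity I would establish first is
\[ g_{y',z'}(i,j) = g_{y,z}(a+i,b+j) - g_{y,z}(a,b) \quad \text{for all } (i,j) \in \Z^2. \]
Both sides vanish at $(0,0)$. Both satisfy the recursions $h(i+1,j) = h(i,j) + (1,-y'_i)$ and $h(i,j+1) = h(i,j) + (z'_j,1)$. For the right-hand side this is immediate from the defining recursions of $g_{y,z}$ at the points $(a+i,b+j)$, since $y_{a+i} = y'_i$ and $z_{b+j} = z'_j$. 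The recursions determine the function uniquely, because they can be run backwards as well as forwards, so the two functions agree on all of $\Z^2$. Equivalently, this follows from \eqref{eq:gyz-addition}, extended to negative $i,j$ by the uniqueness argument.

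Next I would compare the tiles. Put $\vec c = g_{y,z}(a,b)$. In $f(x'',y',z')$ the tile at $g_{y',z'}(i,j) = g_{y,z}(a+i,b+j) - \vec c$ is $x''_{(i,j)} = x_{(a+i,b+j)}$. Its fault lines are on the east, north, west and south edges exactly when $y'_i$, $z'_j$, $y'_{i-1}$ and $z'_{j-1}$ equal $1$, that is, when $y_{a+i}$, $z_{b+j}$, $y_{a+i-1}$ and $z_{b+j-1}$ equal $1$.

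In $\sigma^{\vec c} f(x,y,z)$ the cell at position $\vec u$ carries the content of $f(x,y,z)$ at $\vec u + \vec c$. Taking $\vec u = g_{y,z}(a+i,b+j) - \vec c$, that content is the tile $x_{(a+i,b+j)}$ with the same fault-line data. So the two configurations agree on the image of $g_{y',z'}$.

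For the remaining cells, Lemma~\ref{lem:gyz-injective} gives that $g_{y',z'}$ is injective with image
\[ \Z^2 \setminus \{g_{y',z'}(i,j)+(1,0) \mid y'_i = z'_j = 1\}. \]
By the identity this equals
\[ \Bigl(\Z^2 \setminus \{g_{y,z}(i',j')+(1,0) \mid y_{i'} = z_{j'} = 1\}\Bigr) - \vec c = g_{y,z}(\Z^2) - \vec c. \]
Hence the vortex positions of the two configurations also coincide, and both carry $\vortex{0.25}$ there. The two configurations therefore agree everywhere.

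The only delicate points are the sign convention of the shift, which I will check against $\sigma^{\vec v}(x)_{\vec n} = x_{\vec n+\vec v}$, and the validity of the identity for negative indices. I will handle the latter by the uniqueness of solutions to the recursions rather than by \eqref{eq:gyz-addition} alone.
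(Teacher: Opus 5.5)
Your proof is correct. It differs from the paper's mainly in how a general shift $(a,b)$ is handled.

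The paper treats only the generator $(1,0)$, via the explicit formula $g_{\sigma y,z}(i,j) = g_{y,z}(i,j) + (0, y_0 - y_i)$. It says $(0,1)$ is analogous and then gets general $(a,b)$ by ``composing such shifts''. That last step tacitly needs the cocycle relation $g_{\sigma^a y,\sigma^b z}(a',b') + g_{y,z}(a,b) = g_{y,z}(a+a',b+b')$, plus inverting the generator case to reach negative shifts.

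You prove exactly this relation, in the form $g_{\sigma^a y,\sigma^b z}(i,j) = g_{y,z}(a+i,b+j) - g_{y,z}(a,b)$, once and for all by uniqueness of solutions to the defining recursions. So every $(a,b)\in\Z^2$ is covered uniformly, with no composition bookkeeping. You also check explicitly that the fault-line decorations and the vortex sets coincide. The paper's computation leaves both implicit, since it compares only the $T$-tiles at image points.

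The paper's route gives a concrete one-step formula; yours is more uniform and self-contained. One small simplification: $g_{\sigma^a y,\sigma^b z}(\Z^2) = g_{y,z}(\Z^2) - \vec c$ follows directly from your identity, because $(i,j)\mapsto(a+i,b+j)$ is a bijection of $\Z^2$. You therefore do not need to go through Lemma~\ref{lem:gyz-injective} for the vortex step.
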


\begin{proof}
    We first prove the claim for $(a,b) = (1,0)$.
    For this, denote $g = g_{y,z}$ and $h = g_{\sigma y, z}$.
    
    Let $(i,j) \in \Z^2$.
    Then $h(i,j) = g(i,j) + (0, y_0 - y_i)$: in the case $i \geq 0$ the calculation is
    \[
    h(i,j) - g(i,j) = (i, j-y_{[0, i)}) - (i, j-y_{[1, i+1)}) = (0, y_0 - y_i)
    \]
    and the $i < 0$ case is similar.
    Now we have
    \begin{align*}
    f(\sigma^{(1,0)} x, \sigma y, z)_{h(i,j)} = {} & (\sigma^{(0,1)} x)_{(i,j)} \\
    {} = {} & x_{(i+1, j)} \\
    {} = {} & f(x, y, z)_{g(i+1, j)} \\
    {} = {} & f(x,y,z)_{g(i,j) + (1, -y_i)} \\
    {} = {} & f(x,y,z)_{h(i,j) + (1, -y_0)} \\
    {} = {} & (\sigma^{g(1,0)} f(x, y, z))_{h(i,j)}.
    \end{align*}
    Since $(i,j)$ was arbitrary, this implies $f(\sigma^{(1,0)} x, \sigma y, z) = \sigma^{g(1,0)} f(x, y, z)$.
    
    The proof for $(a,b) = (0,1)$ is an essentially similar calculation.
    The general case then follows by composing such shifts.
\end{proof}

\subsection{Dynamical relation between $X_T$ and $W_T$}

We now prove that the wobbling construction preserves aperiodicity and low entropy dimension, and transforms a linearly block transitive shift into a linear block gluing one.

Although our wobbling is not quite the same as the distortion used in \cite{GaSa21}, the proof of \cite[Proposition~23]{GaSa21}, which shows that the distortion of an aperiodic SFTs stays aperiodic, applies essentially directly to our wobblings. However, in our case the proof is somewhat simpler to express, since we have a direct parametrization for the configurations.

\begin{lemma}
\label{lem:Aperiodic}
  If $X_T$ is aperiodic, then so is $W_T$.
\end{lemma}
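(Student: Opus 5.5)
We argue by contraposition: suppose $x' \in W_T$ has a nonzero period $\vec p$, i.e.\ $\sigma^{\vec p} x' = x'$. We aim to produce a nonzero period of some configuration of $X_T$. First we reduce to the case that $x'$ has no $\vortex{0.25}$ at the origin. Vortices are never adjacent (Rule~2), so either the origin or $(1,0)$ carries a tile. Replacing $x'$ by its shift by $(1,0)$ if necessary, we may assume the origin carries a tile, and the period is unaffected. Then $x' = f(x,y,z)$ for some $(x,y,z) \in X_T \times \{0,1\}^\Z \times \{0,1\}^\Z$, by the homeomorphism lemma.

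The main step is to pull the period back through $f$. Since $x'_{\vec p} = x'_{(0,0)}$ is not a vortex, $\vec p \in g_{y,z}(\Z^2)$, say $\vec p = g_{y,z}(a,b)$. Here $(a,b) \neq (0,0)$ by injectivity (Lemma~\ref{lem:gyz-injective}). By Lemma~\ref{lem:f-shift},
\[ f(\sigma^{(a,b)}x, \sigma^a y, \sigma^b z) = \sigma^{g_{y,z}(a,b)} f(x,y,z) = \sigma^{\vec p} x' = x' = f(x,y,z). \]
Injectivity of $f$ then gives $\sigma^{(a,b)} x = x$, $\sigma^a y = y$ and $\sigma^b z = z$. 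Thus $x \in X_T$ has the nonzero period $(a,b)$, which contradicts the aperiodicity of $X_T$.

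We must be careful about what ``aperiodic'' means. If it means ``no configuration has a nonzero period'', the argument above is complete. If it instead means the weaker ``no configuration has a finite orbit'' (no two independent periods), the same argument applies to each period separately. Two independent periods $\vec p_1, \vec p_2$ of $x'$ pull back to periods $(a_1,b_1), (a_2,b_2)$ of $x$. These are independent, because the map $\vec p \mapsto (a,b)$ is injective on the period group (as $g_{y,z}$ is injective) and is compatible with addition. Compatibility holds because both are group homomorphisms, by the cocycle identity $g_{y,z}((a,b)+(c,d)) = g_{y,z}(a,b) + g_{\sigma^a y, \sigma^b z}(c,d)$ together with the invariance $\sigma^a y = y$, $\sigma^b z = z$. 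Alternatively, injectivity of $f$ and the shift formula show directly that the stabilizer of $x'$ embeds into the stabilizer of $(x,y,z)$, which lies inside that of $x$.

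The only delicate point is the initial reduction. We need that the origin (or its east neighbour) is not a vortex, and we need that $f$ is injective on the whole domain so that we can cancel it. Both follow from the lemmas already proved.
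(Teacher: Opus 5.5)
Your proof is correct and follows the paper's argument: you reduce to a configuration with no vortex at the origin, write $x' = f(x,y,z)$ and $\vec p = g_{y,z}(a,b)$, and then use Lemma~\ref{lem:f-shift} together with injectivity of $f$ to get $\sigma^{(a,b)}x = x$. For the totally periodic case, you show that $\vec p \mapsto g_{y,z}^{-1}(\vec p)$ is an injective homomorphism on the period group, using the cocycle identity and $\sigma^a y = y$, $\sigma^b z = z$; this justification is cleaner than the paper's syndeticity remark, which as written (choosing $(a',b')$ with $a' \neq a$, $b' \neq b$) does not by itself ensure that the two pulled-back periods are independent.
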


\begin{proof}
  Suppose that a configuration $x' \in W_T$ satisfies $\sigma^{(i,j)}x' = x'$ for some $(i,j) \neq (0,0)$.
  We may assume $x'_{(0,0)} \neq \vortex{0.25}$.
  Then $(x,y,z) = f^{-1}(x')$ exists, $x'_{(i,j)} \neq \vortex{0.25}$, and thus $(i,j) = g_{y,z}(a,b)$ for some $(a,b) \in \Z^2$.
  By Lemma~\ref{lem:f-shift} we have
  \[
  f(x, y, z) = \sigma^{g_{y,z}(i,j)}f(x, y, z) = f(\sigma^{(a,b)} x, \sigma^a y, \sigma^b z),
  \]
  which implies $\sigma^{(a,b)} x = x$.

  If $x'$ is totally periodic, the set of periodicity vectors of $x'$ is syndetic, so there also exist $(a', b')$ with $a' \neq a$ and $b' \neq b$ such that $\sigma^{g_{y,z}(a', b')} x' = x'$.
  Then $\sigma^{(a',b')} x = x$, so $x$ is totally periodic.
\end{proof}

\begin{lemma}
\label{lem:ZeroEntropyDimension}
If the exact, upper or lower entropy dimension of $X_T$ is $d$, then the corresponding entropy dimension of $W_T$ is $\max(d, 1)$.
\end{lemma}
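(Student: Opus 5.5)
We bound $N_k(W_T) = |\lang_k(W_T)|$ from above and below in terms of $N_{k'}(X_T)$ with $k' = \Theta(k)$, plus a factor $2^{O(k)}$ coming from the fault lines. Since $\log(\log N)$ is insensitive to multiplying $N$ by $2^{O(k)}$ up to taking a maximum with $\log k$, this gives the claim.

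\textbf{Upper bound.} Take any $k \times k$ pattern $p \in \lang_k(W_T)$. By shifting, we may assume $p$ occurs in some $x' = f(x,y,z)$ at a position near the origin whose cell at the origin is not a vortex. (Vortices are never adjacent, so one of the origin and $(1,0)$ is a non-vortex.) By the continuity estimate in the proof that $f$ is a homeomorphism, the pattern $p$ is determined by $x|_{[-ck,ck]^2}$, $y|_{[-ck,ck]}$ and $z|_{[-ck,ck]}$ for some constant $c$. We then refine this to get exactly $O(k)$ bits from $y,z$. The fault-line structure of $p$ is determined by the bits of $y,z$ indexing the columns and rows of $x$ whose images meet the window; by \eqref{eq:gyz-addition}, only $O(k)$ such indices occur. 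Moreover, the tiles of $p$ come from a region of $x$ of diameter $O(k)$: since $g_{y,z}$ moves each unit step by at most $1$ in each coordinate, its inverse on the window is $O(1)$-Lipschitz in $\ell^\infty$ up to constants. Hence
\[ N_k(W_T) \leq C^{k}\, k^{O(1)}\, N_{ck}(X_T), \]
where the polynomial factor accounts for the choice of offset. Taking $\log\log$ and dividing by $\log k$, we get $\bar D(W_T) \leq \max(\bar D(X_T),1)$, and the same for the lower entropy dimension along the subsequence realizing the liminf for $X_T$. Here we use that $N_{ck}$ versus $N_k$ does not change dimensions, because $\log(ck)/\log k \to 1$.

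\textbf{Lower bound.} There are two contributions. First, fixing $x$ and $y = 0$ and varying $z$ on $\llb k \rrb$ gives $2^{k}$ distinct $k\times k$ patterns, since the horizontal fault lines are visible. This yields $\log N_k(W_T) \geq k$, so the dimension is at least $1$; we may need $X_T \neq \emptyset$, which is implicit. Second, with $y = z = 0$, the map $f(\cdot,0,0)$ is the identity embedding $X_T \hookrightarrow W_T$, so $N_k(W_T) \geq N_k(X_T)$. Together these give $\log N_k(W_T) \geq \max(k, \log N_k(X_T))$, so each entropy dimension of $W_T$ is at least $\max(d,1)$. Combining with the upper bound, $\log N_k(W_T) = O(k) + \log N_{ck}(X_T)$. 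From this sandwich we get $\log\log N_k(W_T)/\log k \to$ the max, handled separately for limsup, liminf and the exact case.

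The main obstacle is the upper bound's localization: showing rigorously that a $k\times k$ window of $x'$ depends only on an $O(k)$-size window of $x$ and $O(k)$ bits of $y,z$, \emph{uniformly in its position}. The continuity statement only covers windows at the origin. For a window at a far position, we first use Lemma~\ref{lem:f-shift} to recenter: a non-vortex cell of the window equals $g_{y,z}(a,b)$ for some $(a,b)$, and $\sigma^{g_{y,z}(a,b)}x' = f(\sigma^{(a,b)}x,\sigma^a y,\sigma^b z)$. This reduces every window to one near the origin, after which the local bound applies. Finally, we must check that the step from $\max(a_k,b_k)$ to the $\log\log$ ratio commutes correctly with liminf; it does, since $\log\log\max(u,v) = \max(\log\log u,\log\log v)$.
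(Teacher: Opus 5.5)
Your proposal is correct and follows essentially the paper's argument: the lower bound comes from the embedding $f(\cdot,0,0)$ of $X_T$ together with freely chosen horizontal fault lines, and the upper bound $N_k(W_T) \leq 2^{O(k)} N_{O(k)}(X_T)$ comes from the locality of $f$ at a non-vortex origin. The differences are minor. The paper bounds $N_{2k+2}(X_T)$ by a fixed power of $N_k(X_T)$, which treats liminf and limsup uniformly, whereas your rescaling of $\log k$ along a subsequence also needs the monotonicity of $N_k$ in the liminf case. Your recentering via Lemma~\ref{lem:f-shift} is unnecessary, since $W_T$ is shift-invariant and $f$ maps onto every configuration whose origin cell is not a vortex.
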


\begin{proof}
Of course, $W_T$ has a copy of every pattern of $X_T$ (when we have no fault lines). Thus, the entropy dimensions are no smaller than those of $X_T$. On the other hand, just looking at horizontal fault lines, we already see that all the three entropy dimensions must be at least $1$.

We now prove that if one of the entropy dimensions for $X_T$ is $d > 1$, then we have the corresponding bound for $W_T$. Consider all configurations $x \in X_T$ and $y, z \in \{0,1\}^\Z$. The region $[0, k]^2 \subset D_{2k+2}$ of $f(x, y, z)$ is determined by $x|_{[-2k-1, 2k+2]^2}$, $y|_{[-2k-1, 2k+2]}$ and $z|_{[-2k-1, 2k+2]}$.

  Hence, the number of patterns $P \in \lang_{k+1}(W_T)$ with $P_{(0,0)} \neq \vortex{0.25}$ is at most $2^{8k+4} |\lang_{2k+2}(X_T)|$.
  Each pattern $Q \in \lang_k(W_T)$ occurs in such a $P$ with at most three other patterns, so that $N_k(W_T)  
  \leq 2^{8k+6} N_{2k+2}(X_T)$. 

For the upper entropy dimension we have
\begin{align*}
\bar{D}(W_T) &= \limsup_{k \rightarrow \infty} \frac{\log(\log(N_k(W_T))}{\log k} \\
&\leq \limsup_{k \rightarrow \infty} \frac{\log(\log(2^{8k+6} N_{2k+2}(X_T))}{\log k} \\
&\leq \limsup_{k \rightarrow \infty} \frac{\log(\log(2^{8k+6}) + \log(N_{2k+2}(X_T)))}{\log k} \\
&\leq \limsup_{k \rightarrow \infty} \frac{\log(O(k) + \log(N_{2k+2}(X_T)))}{\log k} \\
&\leq \limsup_{k \rightarrow \infty} \frac{\max(\log(k), \log(\log(N_{2k+2}(X_T)))) + O(1)}{\log k} \\
&= \limsup_{k \rightarrow \infty} \max\left(1, \frac{\log(\log(N_{2k+2}(X_T)))}{\log k}\right) \\
&\leq \max\left(1, \limsup_{k \rightarrow \infty} \frac{\log(\log(N_{k}(X_T)^3))}{\log k}\right) \\
&\leq \max\left(1, \limsup_{k \rightarrow \infty} \frac{\log(3\log(N_{k}(X_T)))}{\log k}\right) \\
&\leq \max\left(1, \limsup_{k \rightarrow \infty} \frac{\log(\log(N_{k}(X_T))) + O(1)}{\log k}\right) \\
&\leq \max(1, \bar{D}(X_T))
\end{align*}
and similarly for other entropy dimensions.
\end{proof}

\begin{lemma}
\label{lem:LinearBG}
  If $X_T$ is weakly linearly block transitive with constant $K$, then $W_T$ has linear block gluing with constant $46K+30$.
\end{lemma}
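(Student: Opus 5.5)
The plan is to reduce gluing in $W_T$ to gluing in $X_T$ through the parametrization $f$, and to use the free bits of $y$ and $z$ between the two patterns to absorb the error term allowed by weak transitivity.

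\emph{Setup and pullback.} By symmetry it suffices to treat horizontal gluing. Let $P, Q \in \lang_n(W_T)$, placed with horizontal gap $k \geq (46K+30)n$, and any vertical offset. Choose $x' \in W_T \cap [P]$. After translating by at most one step we may assume $x'_{(0,0)} \neq \vortex{0.25}$ at a corner of $P$, so $x' = f(x,y,z)$. By the locality statement at the end of the proof that $f$ is a homeomorphism, $P$ is determined by $x|_{R}$, $y|_{I}$ and $z|_{J}$. Here $R$ is a square of side $O(n)$ (about $4n$) and $I, J$ are intervals of length $O(n)$. Moreover, by formula (\ref{eq:gyz-addition}), $g_{y,z}$ maps $R$ into a region of diameter $O(n)$ around $P$. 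Do the same for $Q$, getting $(u,v,w)$, a square $R'$ and intervals $I', J'$. Let $p = x|_R$ and $q = u|_{R'}$; these are globally valid patterns of $X_T$ of size $m = O(n)$.

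\emph{Gluing in $X_T$.} We want a configuration $f(x_0,y_0,z_0)$ in which the copy of $p$ sits at the origin and the copy of $q$ sits at preimage coordinates $(a,b)$ with $g_{y_0,z_0}(a,b)$ equal to the required target $\vec t$ for $Q$. Lemma~\ref{lem:f-shift} is what lets us move the pulled-back data of $Q$ to an arbitrary preimage location.

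Apply weak transitivity to $p, q$ with a target vector $\vec v = (a_0, b_0)$, where $a_0 \approx t_1$ and $b_0$ is chosen with $\|\vec v\|_\infty \geq Km$. This yields $x_0 \in X_T$ containing $p$ at the origin and $q$ at $(a,b)$ with $\|(a,b) - \vec v\|_\infty \leq Km$.

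Next define $y_0$ to agree with $y$ on $I$ and with the appropriately shifted $v$ on $I' + a$, and similarly for $z_0$. The remaining bits, namely the columns strictly between $I$ and $I'+a$ and the rows between $J$ and $J'+b$, are free.

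\emph{Correcting the error.} By formula (\ref{eq:gyz-addition}),
\[ g(a,b) = \bigl(a + \textstyle\sum z_0|_{[0,b)},\; b - \sum y_0|_{[0,a)}\bigr). \]
The horizontal coordinate can be tuned by choosing the number of ones among the free rows of $z_0$. The vertical coordinate can be tuned by choosing the number of ones among the free columns of $y_0$, of which there are about $k$.

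The subtle point, which I expect to be the main obstacle, is the coupling between the coordinates. When the patterns lie side by side, the preimage vertical offset $b$ is small, so there are too few free rows of $z_0$ to fix a horizontal error of size $Km$. To overcome this, I would deliberately pick $\vec v$ with a large vertical component, $b_0 \approx t_2 + c\,n$ for a suitable multiple $c$ of $K$. This creates $\Theta(Kn)$ free rows, and the corresponding excess height is cancelled by putting about $b_0 - t_2$ ones into the free columns of $y_0$.

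It then remains to check the inequalities. First, the number of free bits in each direction must exceed the worst-case deviation. That deviation consists of the error $Km$ plus the $O(n)$ uncontrolled contribution of the fixed bits in $I, J, I', J'$. Second, the fixed windows must be disjoint. Third, the shears applied between the patterns must not move $P$'s image; this holds because only bits outside $I, J$ are changed. Tracking these with $m \approx 4n$ and window sizes $\approx 4n$ gives a linear constant of the stated form $46K+30$.

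Finally, $f(x_0,y_0,z_0) \in W_T$ contains $P$ at its position and, via Lemma~\ref{lem:f-shift}, $Q$ at the target. Vertical gluing is symmetric, which proves linear block gluing.
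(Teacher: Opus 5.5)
Your overall strategy matches the paper's. You pull back through $f$ and glue the preimages in $X_T$ at a deliberately displaced offset. Both the displacement and the transitivity error are then absorbed by free fault lines inserted between the preimages, using the fact that $Q$'s horizontal position is changed only by rows between the preimages and its vertical position only by columns between them.

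However, the concrete rule $b_0 \approx t_2 + cn$ fails for a band of vertical offsets. For instance, if $t_2 = -cn$, then $b_0 = 0$ and weak transitivity may return any $b \in [-Km, Km]$. The fixed windows $J$ and $J'+b$ of $z_0$ can then overlap, so $z_0$ is not even well defined, and there are no free rows at all. This is exactly the side-by-side obstruction you identified, reappearing.

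You cannot repair this by flipping the sign of the displacement, because all corrections are one-sided:
\begin{itemize}
\item With $Q$ east of $P$, ones in $y_0$ only lower $Q$. So you need $b \geq t_2$ plus the fixed $y$-contribution.
\item Ones in $z_0$ move $Q$ east when $b>0$ but west when $b<0$. So $a_0$ must sit about $Km+O(n)$ on the correct side of $t_1$, not $a_0\approx t_1$.
\item The free rows must cover a range of length about $2Km$, not $Km$.
\end{itemize}
What you actually need is $b_0$ with $b_0 - t_2 \geq Km + O(n)$ and $|b_0| \geq 3Km + O(n)$. This forces a case distinction on $t_2$, plus extra horizontal gap to pay for the additional ones in $y_0$.

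The paper does exactly this. It always places the preimage of $Q_2$ at $(i,j)-(2N,N)+\vec w$, strictly northwest of $Q_1$, and collects the constraints \eqref{eq:constr1}, \eqref{eq:FaultLinesPos}, \eqref{eq:constr2}, \eqref{eq:constr3}. It then uses two values $N_0$ and $N_1$ whose admissible regions, together with their four rotations, cover all offsets outside a square of side linear in $n$.

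Second, the constant $46K+30$ is asserted, not derived. In the paper it comes out of specific bookkeeping:
\begin{itemize}
\item the sharp preimage window $[-n,n]\times[0,2n-1]$, so transitivity is applied at size $2n+1$ with error $K(2n+1)$;
\item the displacement $(2N,N)$;
\item the values $N_0 = 2Kn+K+2n+1$ and $N_1 = 14Kn+6K+9n+4$.
\end{itemize}
Your scheme is organized differently and uses windows of side ``about $4n$''. The locality statement from the proof that $f$ is a homeomorphism, with a corner of $P$ at the origin, actually gives side about $8n$. Your scheme also needs the extra case above. So nothing guarantees that its constant is $46K+30$; you would have to carry out the computation and check that what you obtain is at most $46K+30$. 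As written, once the case split is added, your argument proves linear block gluing with some constant, but not the one in the statement.
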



\begin{proof}
  Let $n > 0$ and $P_1, P_2 \in \lang_n(W_T)$ be arbitrary, and let $(i,j) \in \Z^2$.
  We investigate whether there exists a configuration of $W_T$ containing both patterns with offset $(i,j)$.
  
  Let $x'_1, x'_2 \in W_T$ and $s_1, s_2 \in \{0,1\}$ be such that $P_i$ occurs in $x'_i$ at $(s_i, 0)$ and $x'_i$ does not contain a $\vortex{0.25}$ at the origin.
  Then there exist $x_i \in X_T$ and $y_i, z_i \in \{0,1\}^\Z$ such that $f(x_i, y_i, z_i) = x'_i$ for $i = 1, 2$.

  Suppose that $X_T$ is weakly $(g,g)$-block transitive with $g(n) = Kn$.
  Then there exists a configuration $x \in X_T$ such that $Q_1 = x_1|_{[-n, n] \times [0, 2n-1]}$ occurs in $x$ at $(0,0)$ and $Q_2 = x_2|_{[-n, n] \times [0, 2n-1]}$ occurs in $x$ at $(i,j) - (2N, N) + \vec w$ for some vector $\vec w$ with $\|\vec w\|_\infty \leq K(2n+1)$.
  Here $N > 0$ is an auxiliary parameter, and each $Q_i$ contains every symbol of the configuration $x_i$ that can end up in $[0, n] \times [0, n-1]$ under the map $g_{y_i, z_i}$ regardless of $y_i$ and $z_i$.


  We now add fault lines to the configuration $x$ in order to transform the occurrences of $Q_1$ and $Q_2$ into patterns that contain $P_1$ and $P_2$.
  Since at most $2n$ horizontal and $2n$ vertical fault lines can touch an $n \times n$ pattern of $W_T$, this can be achieved by adding at most $4n$ horizontal and $4n$ vertical fault lines, each of which enters $Q_1$ or $Q_2$.
  The fault lines can be added independently, in the sense that each fault line in $f(x,y,z)$ corresponds to a vertical of horizontal line in $x$, and the line borders $\vec v \in \Z^2$ if and only if the fault line borders $g_{z,y}(\vec v)$.
  Hence, it suffices to verify that $Q_1$ and $Q_2$ are separated horizontally and vertically, with $Q_2$ to the northwest of $Q_1$, and for this,
  \begin{equation}
  \label{eq:constr1}
  \begin{cases}
  i < 2N - 2Kn - K - n, \\
  j \geq N + 2Kn + K + 2n,
  \end{cases}
  \end{equation}
  is enough.
  
  Each of the at most $2n$ horizontal fault line added to $Q_1$ shifts the occurrence of $Q_2$ by one step to the east, and each of the at most $2n$ vertical fault line added to $Q_2$ shifts $Q_2$ to the north.
  The other fault lines do not affect the patterns' positioning.
  Hence, in the resulting configuration $x'$ the pattern $P_1$ occurs at $(s_1, 0)$ and $P_2$ occurs at $(i-2N+a, j-N+b)$ with $a, b \in [-K(2n+1), K(2n+1)+2n+1]$.

  We now modify the configuration $x'$ by adding new fault lines between the occurrences of $P_1$ and $P_2$, with the goal of moving $P_2$ to the position $(s_1+i, j)$.
  This can be achieved by adding $2N-a+s_1 \leq 2N + 2Kn + K + 1$ horizontal fault lines and $N-b \leq N + 2Kn + K$ vertical fault lines between the two patterns, as long as the movements are in the correct directions, i.e.\ the minimum values of $2N-a+s_1$ and $N-b$ are nonnegative.
  This holds if
  \begin{equation}
      \label{eq:FaultLinesPos}
          N \geq 2Kn+K+2n+1.
  \end{equation}

  We also need enough room between the original patterns $Q_1$ and $Q_2$ to add these new fault lines; note that the fault lines already added to produce $P_1$ and $P_2$ all intersect $Q_1$ and $Q_2$, so they can be ignored in this computation.
  The vertical separation between the top of $Q_1$ and the bottom of $Q_2$ is at least $j-N-K(2n+1)-(2n-1)$.
  Hence, this quantity should be larger than $2N + 2Kn + K + 1$, or equivalently,
  \begin{equation}
      \label{eq:constr2}
      j > 3N + 4Kn + K + 2n + 1.
  \end{equation}
  Note that this constraint is stronger than the second inequality of \eqref{eq:constr1}.
  
  By equation \eqref{eq:constr1}, the east border of $Q_2$ is to the west of $Q_1$, so their horizontal separation is at least $-n - (i - 2N + 2Kn + K) = -i + 2N - 2Kn - K - n$.
  Hence, it suffices to have $-i + 2N - 2Kn - K - n > N + 2Kn + K$, or equivalently,
  \begin{equation}
      \label{eq:constr3}
      i < N - 4Kn - 2K - n.
  \end{equation}

  We have shown that for a given value of the parameter $N$, the gluing at offset $(i,j)$ is possible if equations \eqref{eq:constr1}, \eqref{eq:FaultLinesPos}, \eqref{eq:constr2} and \eqref{eq:constr3} hold.
  Consider the parameter $N = N_0 = 2Kn+K+2n+1$, for which \eqref{eq:FaultLinesPos} holds with equality.
  In this case the gluing is possible for all $i < -2Kn-K+n+1$ and $j > 3N_0 + 4Kn+K+2n+1 = 10Kn + 4K + 8n + 4$.
  
  Consider then $N = N_1 = 14Kn + 6K + 9n + 4$.
  Now the gluing is possible for all $i < N_1 - 4Kn - 2K - n = 10Kn + 4K + 8n + 4$ and $j > 3N_1 + 4Kn + K + 2n + 1 = 46Kn + 19K + 29n + 13$.
  
  By rotational symmetry of the construction of $W_T$, whenever gluing is possible with offset $(i,j)$ using this argument, it is also possible with offsets $(-j,i)$, $(-i,-j)$ and $(j,-i)$.
  Hence, gluing is possible with offset $(i,j)$ whenever $|i|, |j| > 46Kn + 19K + 29n + 13$, so $W_T$ is linearly block gluing with constant $46K + 30$.
\end{proof}



\subsection{Linear block gluing}

We now apply the results of the previous section to existing subshifts.

\begin{theorem}
\label{thm:1548}
There exists a weakly linearly block gluing aperiodic SFT, which has entropy dimension 1.
\end{theorem}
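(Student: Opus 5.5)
Theorem~\ref{thm:1548} asks for an aperiodic SFT of entropy dimension $1$ that is weakly linearly block transitive (this is the hypothesis of Lemma~\ref{lem:LinearBG}). The wobbling lemmas then do all the work. The plan is to exhibit one concrete set of Wang tiles $T$ such that $X_T$ is aperiodic, has entropy dimension at most $1$ (indeed zero entropy, with pattern complexity $N_k(X_T) = 2^{O(k)}$), and is weakly linearly block transitive with some constant $K$. Applying Lemmas~\ref{lem:Aperiodic}, \ref{lem:ZeroEntropyDimension} and \ref{lem:LinearBG} then shows that $W_T$ is aperiodic, linearly block gluing with constant $46K+30$, and has entropy dimension $\max(d,1)=1$. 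This gives the theorem, in fact in the stronger linearly block gluing form announced in the introduction.

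The natural choice for $T$ is a self-similar aperiodic tile set, namely Robinson's tiles or a substitution tiling with an SFT cover (e.g.\ via Mozes/Goodman-Strauss) with a primitive square substitution $\tau$ of expansion factor $2$ (or $\lambda$).

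\textbf{Complexity.} The $k \times k$ patterns are determined by a bounded number of level-$\ell$ supertiles, where $\lambda^\ell \approx k$, together with an offset. In Robinson's tiling the extra infinite-cross/line configurations also contribute only polynomially many patterns. Hence $N_k(X_T)$ is polynomial in $k$, so $\log N_k = O(\log k)$, which gives entropy dimension $0$ and, after wobbling, exactly $1$.

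\textbf{Weak linear block transitivity.} This is where self-similarity is used, with the following argument.
\begin{itemize}
\item A globally valid $n\times n$ pattern $p$ appears inside a $2\times 2$ block of level-$\ell$ supertiles with $\lambda^\ell = O(n)$. In Robinson, every valid pattern, including the fault-line ones, appears in a finite supertile of comparable size; this holds up to the usual care with infinite fault lines, which is why I would prefer a tile set whose language equals the language of a single primitive substitution.
\item By primitivity, any such $2\times2$ block appears inside every level-$(\ell+c)$ supertile at some position, for a fixed constant $c$.
\item Given a target displacement $\vec v$ with $\|\vec v\|_\infty \ge Kn$, take a configuration tiled by the level-$(\ell+c)$ supertile grid. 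Place $p$ in the supertile at the origin. Pick the level-$(\ell+c)$ supertile whose position is nearest to $\vec v$, and find $q$ inside it.
\item The error is at most twice the supertile side, which is $O(\lambda^{\ell+c}) = O(n)$. The lower bound $g(n)=Kn$ ensures the two chosen supertiles are distinct, so the placements do not conflict.
\item The constant $K$ comes from $\lambda^{c+1}$ and the $2\times 2$ covering, not from the primitivity exponent.
\end{itemize}

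\textbf{Main obstacle.} The main obstacle is making precise that every globally valid pattern of $X_T$, not only those seen in the substitutive core, lies in a supertile of linear size. For Robinson tiles this fails at infinite fault lines. I would first try a tile set whose subshift is minimal, for instance the Mozes SFT cover of a primitive substitution with recognizability. For such a set the valid patterns are exactly the subpatterns of supertiles, and uniform recurrence with linear repetitivity (which holds for primitive substitutions) gives the required transitivity directly. Aperiodicity follows from recognizability.
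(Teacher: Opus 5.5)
Your reduction is the paper's. You push everything onto a Wang tile set $T$ with $X_T$ aperiodic, of entropy dimension at most $1$, and weakly linearly block transitive, and then Lemmas~\ref{lem:Aperiodic}, \ref{lem:ZeroEntropyDimension} and \ref{lem:LinearBG} finish. Your route to transitivity (primitivity gives linear recurrence, which gives weak linear block transitivity) is exactly the paper's alternative proof via Lemma~\ref{lem:LinearlyRecurrent}. Incidentally, the lower bound $g(n)=Kn$ plays no role in that argument: two occurrences in one configuration never conflict, even if they overlap or lie in the same supertile.

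The step you have not secured is the existence of the tile set, and once the lemmas are available it is the only real content left. The linear recurrence argument needs $X_T$ to be \emph{equal} to the subshift of a primitive substitution, so that every globally valid pattern sits in a supertile of linear size. In their standard form, the theorems of Mozes and Goodman-Strauss only give an SFT that factors onto $X_\tau$ injectively on a set of full measure. They do not assert that this SFT is minimal. As you note yourself for Robinson's tiles, such covers can carry extra fault-line configurations, and these destroy block transitivity. So your sentence ``for such a set the valid patterns are exactly the subpatterns of supertiles'' is unsupported as written. (Your earlier claim that Robinson's fault-line patterns lie in finite supertiles of comparable size is false, as you then concede.)

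The paper closes this gap by citation, in one of two ways:
\begin{itemize}
\item Ollinger's intrinsically $2\times 2$ substitutive aperiodic tile set, whose injective deterministic substitution is primitive by Ballier's thesis.
\item The aperiodic $32\mathrm{id}$-net gluing Robinson-based SFT of Gangloff and Sablik. It is weakly linearly block transitive with constant $33$, which yields the explicit constant $1548$.
\end{itemize}
For the second option, you only need the entropy dimension of $X_T$ to be at most $1$, not $0$. The synchronization layer contributes $2^{O(n)}$ patterns, and Lemma~\ref{lem:ZeroEntropyDimension} absorbs this.
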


Specifically we prove this with the gluing constant 1548.

\begin{proof}
In \cite[Section~2.4]{GaSa21}, an aperiodic SFT $X_T$ is constructed which is $32\textrm{id}$-net gluing. This means that for all $p, q \in \lang_n(X)$ there exists $\vec v \in \Z^2$ such that for all $\vec w \in 33n(\Z^2 \setminus \{(0,0)\}) + \vec v$, we have $X \cap [p] \cap \sigma^{-\vec w}[q] \neq \emptyset$. Clearly this implies weakly linear block transitivity with gluing constant $33$.
By Lemma~\ref{lem:LinearBG}, the SFT $W_T$ has linear block gluing with constant $1548$, and by Lemma~\ref{lem:Aperiodic} it is aperiodic.

The SFT $X_T$ consists of the Robinson tile set \cite{Ro71}, on top of which a new ``synchronization layer'' is superimposed.
The Robinson layer has entropy dimension zero: to specify a globally valid $n \times n$ block, it suffices to fix the phases of the first $\log_2 n + O(1)$ layers, and for each we have 4 possibilities, giving a complexity of $O(n^2)$.

The synchronization layer decorates each long arrow with one of two possible decorations.
A sequence of consecutive arrows must have identical decorations, and each cross determines the decorations of the arrows that originate from it.
In locally valid square patterns of $X_T$, the Robinson layer completely determines the synchronization layer apart from sequences of arrows originating from outside the pattern.
Hence the upper entropy dimension of $X_T$ is at most 1, and by Lemma~\ref{lem:ZeroEntropyDimension}, $W_T$ has entropy dimension 1.
\end{proof}

We also give an alternative construction from \cite{Ol08}, for which the existence of a linear gluing constant follows from the general theory, although we do not obtain a concrete one without an additional calculation.

Recall that a subshift is \emph{linearly recurrent} if every globally valid $n \times n$ pattern appears in every globally valid $kn \times kn$ pattern for some constant $k \geq 1$. Clearly a linearly recurrent subshift is weakly linearly block transitive.

Let $A$ be an alphabet and $\tau : A \to A^{r \times r}$ any substitution. We can apply $\tau$ to a rectangular pattern 
in an obvious way, by replacing each symbol by its $\tau$-image. The corresponding \emph{substitutive} subshift $X_\tau \subset A^{\Z^2}$ is defined by forbidding all patterns that do not occur in $\tau^n(a)$ for any $n \geq 0$ and $a \in A$. The following is classical:

\begin{lemma}
\label{lem:LinearlyRecurrent}
Suppose $\tau$ is primitive, meaning for some $n \geq 1$, every symbol $b \in A$ appears in the pattern $\tau^n(a)$ for all $a \in A$. Then $X_\tau$ is linearly recurrent.
\end{lemma}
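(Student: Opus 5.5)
The plan is to prove linear recurrence via the self-similar structure of $\tau$-images, using primitivity to get a uniform bound on how far apart occurrences of symbols are. Write $\tau^k$ for the $k$-fold iterate, so $\tau^k(a)$ is an $r^k \times r^k$ pattern. Fix $p$ with $\tau^p$ positive, i.e.\ every $b \in A$ occurs in $\tau^p(a)$ for every $a \in A$. (Primitivity as stated gives this for one $n$; since every symbol then has nonempty image containing all symbols, $\tau^{m}$ is positive for all $m \geq p$.)

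\textbf{Step 1: every globally valid pattern sits inside a small supertile patch.} First I will show that every globally valid $n \times n$ pattern $P$ of $X_\tau$ occurs in some $\tau^k(a)$. Here I take the forbidden-pattern definition at face value: globally valid patterns are in particular not forbidden. Next, choose $k$ with $r^{k-1} \leq n < r^k$. Then $\tau^{k+p}(a)$ is a grid of $\tau^k$-supertiles of side $r^k > n$, so $P$ lies in a $2 \times 2$ block of such supertiles. That block is $\tau^k(Q)$ for some $2 \times 2$ pattern $Q$ occurring in some $\tau^j(c)$.

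The main obstacle is the usual one: pinning $P$ inside a $2\times 2$ supertile block needs $P$ to sit in a large enough iterate. I would handle it by taking $P$ to occur in $\tau^m(a)$ with $m$ large. This is possible because occurrences in $\tau^{m}(a)$ propagate to $\tau^{m+p}(b)$ for every $b$, by positivity of $\tau^p$.

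\textbf{Step 2: bound the $2\times 2$ seeds.} The set of $2 \times 2$ patterns occurring in some $\tau^j(c)$ is finite. For each such $Q$, fix a witness $\tau^{j_Q}(c_Q)$ and let $J = \max_Q j_Q$. Using positivity again, every $2\times 2$ legal $Q$ occurs in $\tau^{J+p}(b)$ for every $b \in A$.

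\textbf{Step 3: compute the recurrence constant.} Combining the two steps, $P$ occurs in $\tau^{k}(\tau^{J+p}(b)) = \tau^{k+J+p}(b)$ for every $b \in A$. Now any globally valid $m \times m$ pattern $R$ with $m \geq 2 r^{k+J+p}$ contains a complete aligned supertile $\tau^{k+J+p}(b)$. For this I use the recognizability/aligned-grid structure: every configuration of $X_\tau$ is, up to shift, a concatenation of $\tau^{\ell}$-supertiles for each $\ell$, which follows from $x$ being a limit of patterns inside $\tau^{\ell'}(a)$ together with compactness. Hence $P$ appears in $R$. Since $r^{k} \leq rn$, taking
\[ K = 2 r^{J+p+1} \]
works for all $n$, and this gives linear recurrence.

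The delicate point is the decomposition of $R$ into aligned supertiles. I expect this to be routine, since the grid at level $\ell$ is inherited from any $\tau^{\ell'}(a)$ containing $R$ with $\ell' \geq \ell$, and no recognizability is needed because $R$ is itself a subpattern of some $\tau^{\ell'}(a)$.
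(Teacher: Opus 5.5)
Your proposal is correct and follows essentially the same route as the paper's proof. Both arguments use primitivity to find a uniform level $M$ (your $J+p$) at which every legal $2\times 2$ seed $q$ occurs in $\tau^M(a)$ for every $a$. Both then place the given pattern inside some $\tau^m(q)$ with $r^m$ comparable to $n$, and use the aligned supertile grid inside any large globally valid block to get the constant $2r^{M+1}$.

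Your proof also spells out two points the paper leaves implicit. The first is lifting an occurrence to an arbitrarily high iterate. The second is why the target block decomposes into aligned supertiles: it is itself a subpattern of some $\tau^{\ell'}(a)$.
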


\begin{proof}
Every $2 \times 2$ pattern that appears in $X_\tau$ appears in some $\tau^k(b)$, where $b \in A$ and $k$ is bounded from above. By primitivity, every such pattern appears in $\tau^{k+m}(a)$ for every $a \in A$ and $m \geq n$. Hence there exists $M$ such that every $2 \times 2$ pattern appears in $\tau^M(a)$ for every $a \in A$. Thus, every $2 \times 2$ pattern $q$ appears in $X_{\tau}$ in every $2r^M \times 2r^M$ block, and for $m \geq 0$, $\tau^m(q)$ appears in every $2r^{m+M} \times 2r^{m+M}$-block.

Consider a globally valid $\ell \times \ell$ pattern $p$. Since it is not forbidden, it appears in some $\tau^k(b)$.
Inside this pattern, $p$ appears in a subpattern of the form $\tau^m(q)$ for some $2 \times 2$ pattern $q$ and $m = \lceil \log_r \ell \rceil$.
Then $p$ appears in every $2r^{m+M} \times 2r^{m+M}$ block.
Here $2r^{m+M} < 2r^{M+1} \ell$ is linear in $\ell$.
\end{proof}

\begin{proof}[Alternative proof of Theorem~~\ref{thm:1548}]
It suffices to show that there is a tile set $T$ such that $X_T$ is aperiodic, has entropy dimension at most 1, and is weakly linearly block transitive. In \cite{Ol08a}, Ollinger constructs an intrinsically substitutive aperiodic SFT.

Specifically, it is $2$-by-$2$ substitutive with an injective deterministic substitution. Such subshifts are easily seen to have zero entropy dimension: the number of patterns of shape $n$-by-$n$ is indeed linear in $n^2$, thus $W_T$ has entropy dimension $1$ by Lemma~\ref{lem:ZeroEntropyDimension}.

Since $X_T$ is aperiodic, by Lemma~\ref{lem:Aperiodic} $W_T$ is as well. It is shown in \cite{Ba09} that the substitution is primitive (specifically this follows from Lemma~1.33 and Theorem 1.35). A primitive substitutive subshift is linearly recurrent (Lemma~\ref{lem:LinearlyRecurrent}), thus weakly linearly block transitive. Now Lemma~\ref{lem:LinearBG} shows that $W_T$ is linearly block-gluing.
\end{proof}

\bibliographystyle{plain}
\bibliography{bib}{}

\end{document}